\numberwithin{equation}{section}
\newtheorem{theorem}{Theorem}[section]
\newtheorem{proposition}[theorem]{Proposition}
\newtheorem{lemma}[theorem]{Lemma}
\newtheorem{Definition}[theorem]{Definition}
\newenvironment{definition}{\begin{Definition}\rm}{\end{Definition}}
\newtheorem{Remark}[theorem]{Remark}
\newenvironment{remark}{\begin{Remark}\rm}{\end{Remark}}
\newtheorem{RHproblem}[theorem]{RH problem}
\newenvironment{rhproblem}{\begin{RHproblem}\rm}{\end{RHproblem}}
\newcommand{\C}{\mathbb{C}}
\newcommand{\R}{\mathbb{R}}
\newcommand{\ds}{\displaystyle}
\newcommand{\RR}{\mathcal{R}}
\renewcommand{\O}{\mathcal{O}}
\renewcommand{\Re}{{\rm Re} \,}
\renewcommand{\Im}{{\rm Im} \,}
\def\diag{\mathop{\mathrm{diag}}\nolimits}
\def\det{\mathop{\mathrm{det}}\nolimits}
\begin{document}
\title{Non-intersecting squared Bessel paths: \\ critical time and
double scaling limit}
\author{A.B.J.\ Kuijlaars, A.\ Mart\'{\i}nez-Finkelshtein, and F.\ Wielonsky}

\maketitle 

\begin{abstract}
We consider the double scaling limit for a model of $n$
non-intersecting squared 
Bessel processes in the confluent case: all paths start at time $t=0$
at the same positive value $x=a$, remain positive, and are conditioned
to end at time $t=1$ at $x=0$. After appropriate rescaling, the paths
fill a region in the $tx$--plane as $n\to \infty$ that intersects the
hard edge at $x=0$ at a critical time $t=t^{*}$. In a previous paper,
the scaling limits for the positions of the paths at time $t\neq
t^{*}$ were shown to be the usual scaling limits from random matrix
theory. Here, we describe the limit as $n\to \infty$ of the
correlation kernel at critical time $t^{*}$ and in the double scaling
regime. We derive an integral representation for the limit kernel
which bears some connections with the Pearcey kernel. The analysis is
based on the study of a $3\times 3$ matrix valued Riemann-Hilbert
problem by the Deift-Zhou steepest descent method. The main ingredient
is the construction of a local parametrix at the origin, out of the
solutions of a particular third-order linear differential equation,
and its matching with a global parametrix.
\end{abstract}

\tableofcontents

\section{Introduction and main results}

\subsection{Introduction}

We considered in \cite{KMW} a model of $n$ non-intersecting squared
Bessel paths in the confluent case. In this model, all paths start
at time $t=0$ at the same positive value $x=a > 0$ and end at time $t=1$ at $x=0$. Our
aim was to study the asymptotic behavior of the model as
$n\to\infty$. 

The positions of the squared Bessel paths at any given time $t \in (0,1)$
are a determinantal point process with a correlation kernel  that
is built out of the transition probability density function of the
squared Bessel process. In \cite{KMW} we found that, after appropriate 
scaling, the paths fill out a region in the $tx$ plane that we described explicitly. 
Initially, the paths stay away from the hard edge at $x=0$. At a certain critical time
$t^*$ the smallest paths come to the hard edge and then remain close to
it, as can be seen in Figure~\ref{fig:SqBessel-50paths1}.

\begin{figure}[t]
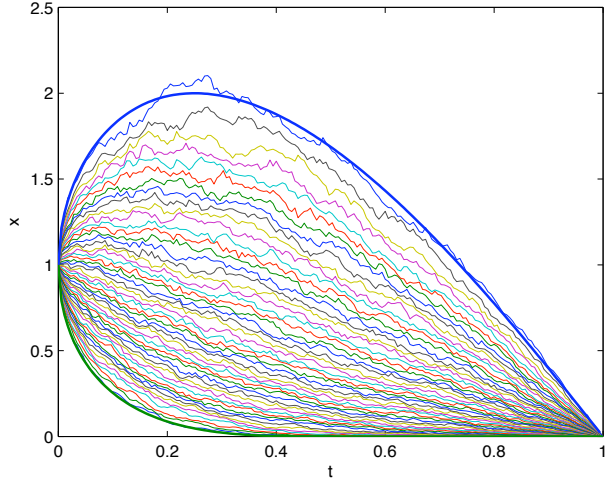

\centering \begin{overpic}[scale=0.63]%
{SqBessel-50paths1}%
\end{overpic}
\caption{Numerical simulation of $50$ rescaled non-intersecting squared Bessel
paths with $a=1$.
Bold lines are the boundaries of the domain filled out by the paths as
their number increases.}\label{fig:SqBessel-50paths1}
\end{figure}


In \cite{KMW} we also proved the local scaling limits of 
the correlation kernel as $n \to \infty$, that are typical from random matrix theory.
Thus we find the sine kernel in the bulk and the Airy kernel at the soft edges,
which includes the lower boundary of the limiting domain for $t < t^*$. 
For $t > t^*$, we find the Bessel kernel at the hard edge $0$,
see \cite[Theorems 2.7-2.9]{KMW} . 

In this paper we consider the critical time $t = t^*$. We describe 
the transition from the Airy kernel to the Bessel kernel by means of a new one-parameter family
of limiting kernels that arise as limiting kernels around the critical time.
This soft-to-hard edge transition is different from
previously studied ones in \cite{BF} or \cite{CK2}, but is related to the
one in \cite{BoKu}. 

We consider the squared Bessel process with parameter $\alpha > -1$,
with transition probabality density $p_{t}^{\alpha}$ given by, see \cite{BS, KatTan1, KO},
\begin{equation} \label{ptalpha}
\begin{aligned} 
   p_{t}^{\alpha} (x,y) & = \frac{1}{2t}\left(\frac{y}{x} \right)^{\alpha/2}
e^{- (x+y)/(2t)}
I_{\alpha}\left(\frac{\sqrt{xy}}{t} \right), &\quad x,y>0, \\
  p_{t}^{\alpha} (0,y) &= \frac{y^{\alpha}}{(2t)^{\alpha+1}\Gamma (\alpha+1) }e^{-y/(2t)}, & \quad  y>0,
\end{aligned}
\end{equation}
where $I_{\alpha}$ denotes the modified Bessel function of the
first kind of order $\alpha$,
\begin{equation} \label{Ialpha}
	I_{\alpha} (z)=
    \sum_{k=0}^{\infty}\frac{(z/2)^{2k+\alpha}}{k! \, \Gamma(k+\alpha+1)}.
\end{equation}

A remarkable theorem of Karlin and McGregor \cite{KM} describes the
distribution of $n$ independent non-intersecting copies of a one-dimensional
diffusion process at any given
time $t$ in terms of its transition probabilities. In the case of the squared
Bessel process, with all starting points at time $0$ in $a > 0$ and all ending points at a later
time $T > 0$ in $0$, the theorem implies
that the positions of the paths at time $t \in (0,T)$ have
the joint probability density
\begin{equation} \label{biorthogonal}
    \mathcal P(x_1, \ldots, x_n) = \frac{1}{Z_n} \det[ f_j(x_k)]_{j,k=1, \ldots,n} \det[g_j(x_k)]_{j,k=1, \ldots, n}
    \end{equation}
 on $(\mathbb R^+)^n$,
with  functions   
    \begin{align} \label{fjcoalesce1}
       f_{2j-1}(x) & = x^{j-1} p_t^{\alpha}(a,x), && \qquad j =1, \ldots, n_1 := \lceil{n/2}\rceil, \\
       \label{fjcoalesce2}
       f_{2j}(x) & = x^{j-1} p_t^{\alpha+1}(a,x), && \qquad j=1, \ldots, n_2 := n-n_1, \\
       \label{gjcoalesce}
       g_j(x) & = x^{j-1} e^{-\frac{x}{2(T-t)}}, && \qquad j = 1, \ldots, n,
    \end{align}
see \cite[Proposition 2.1]{KMW}. The constant $Z_n$ is a normalizing constant
which is taken so that \eqref{biorthogonal} defines a probability density function
on $(\mathbb R^+)^n$.

Formula \eqref{biorthogonal} is characteristic of a biorthogonal
ensemble \cite{Bo}. It is known that \eqref{biorthogonal} defines
a  determinantal point with correlation kernel $\widehat K_n$ 
\begin{equation} \label{corkernelbiorthogonal} 
	\widehat K_n(x,y) = \widehat K_n(x,y;t, T) = \sum_{j,k=1}^n f_j(x) \left[ A^{-1} \right]_{k,j} g_k(y) 
	\end{equation}
where $\left[ A^{-1} \right]_{k,j}$ is the $(k,j)$th entry of the inverse of
the matrix
\[ A = \left[ \int_0^{\infty} f_j(x) g_k(x) dx \right]_{j,k=1, \ldots, n}. \]
This means that
\begin{equation} \label{corkernproperty1} 
	\mathcal P(x_1, \ldots, x_n) = \frac{1}{n!} \det \left[ \widehat K_n(x_i,x_j) \right]_{i,j=1, \ldots,n} 
	\end{equation}
and for each $m = 1, \ldots, n-1$,
\begin{equation} \label{corkernproperty2} 
	\frac{n!}{(n-m)!} \int_0^{\infty} \cdots \int_0^{\infty} \mathcal P(x_1, \ldots, x_n) dx_{m+1} \cdots dx_n
	= \det\left[ \widehat K_n(x_i,x_j) \right]_{i,j=1, \ldots,m}. 
	\end{equation}
Determinantal processes arise naturally in probability theory, see
e.g.\ \cite{Jo2, So}. The connection with models of non-intersecting paths is well-known
see  \cite[Chapter 10]{For} and references therein. 
Non-intersecting squared Bessel paths and related continuous models with a wall are studied in  \cite{KIK,KatTan1,KatTan2,KO,TraWid}.
Non-intersecting discrete random walks with a wall are considered in the recent papers \cite{BFPSW,BFS,BoKu,WarWin}.

As in \cite{KMW} we introduce a time rescaling
\[ t \mapsto \frac{t}{2n}, \qquad T \mapsto \frac{1}{2n} \]
and we consider the rescaled kernels
\begin{equation} \label{kernelKnt} K_n(x,y;t) = e^{-n(x-y)/(1-t)} \widehat K_n\left(x,y; \frac{t}{2n}, \frac{1}{2n} \right),
	\qquad x,y > 0, \qquad 0 < t < 1,
\end{equation}
that depend on the variable $t$. The prefactor $e^{-n(x-y)/(1-t)}$ does not 
affect the correlation functions \eqref{corkernproperty2}.
We define  $w_{1,n}$, $w_{2,n}$ on $[0,\infty)$ by 
\begin{equation}
\begin{aligned} \label{weights1}
w_{1,n}(x) &= x^{\alpha/2} \exp\left(-\frac{nx}{t(1-t)}\right) I_{\alpha}\left(\frac{2n\sqrt{ax}}{t}
\right), \\
w_{2,n}(x) &=  x^{(\alpha+1)/2} \exp\left(-\frac{nx}{t(1-t)}\right) I_{\alpha+1}\left(\frac{2n\sqrt{ax}}{t}
\right),
\end{aligned}
\end{equation}
as in \cite[equation (2.20)]{KMW}. 

Then the kernel \eqref{kernelKnt} is expressed
in terms of a RH problem. Indeed we have
\begin{equation}\label{defK} 
	K_n(x,y;t) = \frac{1}{2\pi i(x-y)} \begin{pmatrix} 0 & w_{1,n}(y) &
	w_{2,n}(y) \end{pmatrix} 
    Y_+^{-1}(y) Y_+(x) \begin{pmatrix} 1 \\ 0 \\ 0 \end{pmatrix} 
\end{equation}
where $Y$ is the solution of the following matrix valued
Riemann-Hilbert problem, see \cite{KMW}:

\begin{rhproblem} \label{rhpforY}
Find $Y:\C\setminus\R\to\C^{3\times3}$ such that
\begin{enumerate}
\item $Y$ is analytic in $\mathbb{C} \setminus [0,\infty)$.
\item On the positive real axis, $Y$ possesses continuous boundary values $Y_+$
(from the upper half plane) 
and $Y_-$ (from the lower half plane), and
\begin{equation}  \label{Yjump}
   Y_+(x) = Y_-(x) \begin{pmatrix}
                   1 & w_{1,n} (x)& w_{2,n} (x)\\
                   0 & 1 & 0 \\
                   0 & 0 & 1
                   \end{pmatrix}, \qquad x > 0,
\end{equation}
\item $Y(z)$ has the following behavior at infinity:
\begin{equation}  \label{Yasym}
    Y(z) = \left( I + \mathcal{O}\left(\frac1z\right) \right)\begin{pmatrix}
            z^{n} & 0 & 0 \\
            0 & z^{-n_1} & 0 \\
            0 & 0 & z^{-n_2}
            \end{pmatrix} , \quad z
        \to \infty, \quad z\in \mathbb{C} \setminus \R,
\end{equation}
where $n_1 = \lceil{n/2}\rceil$ and $n_2 = \lfloor n/2 \rfloor$.
\item $Y(z)$ has the following  behavior near the origin, as $z\to 0$,
$z\in \C\setminus [0,\infty)$,
\begin{equation}  \label{Yedge}
    Y(z) = \mathcal{O}
        \begin{pmatrix}
            1 & h(z) & 1 \\
            1 & h(z) & 1 \\
            1 & h(z) & 1
            \end{pmatrix},
        \mbox{ with }
h(z)=\left\{
        \begin{array}{ccc}
|z|^{\alpha}, & \mbox{ if } & -1<\alpha<0,\\
\log|z|, & \mbox{ if } & \alpha=0,\\
1, & \mbox{ if } & 0<\alpha.\\
            \end{array}\right.
\end{equation}
The $\mathcal{O}$ condition in \eqref{Yedge} is to
be taken entry-wise.
\end{enumerate}
\end{rhproblem}

This RH problem has a unique solution given in terms of multiple
orthogonal polynomials for the modified Bessel weights
\eqref{weights1}.

It was proven in \cite[Proposition 2.3 and Theorem~2.4]{KMW} that in this scaling there
is a critical time 
\begin{equation} \label{tstar}
	 t^{*}= \frac{a}{a+1} 
	 \end{equation}
depending only on the starting value $a$. For every $t \in (0,1)$, we have that
\[ \lim_{n \to \infty} \frac{1}{n} K_n(x,x; t) = \rho(x; t) \]
exists, where the limiting density $\rho(x;t)$ is supported on
an interval $[p(t), q(t)]$ with $p(t) > 0$ if $t < t^*$ and
$p(t) = 0$ if $t > t^*$. The results of \cite{KMW} were obtained from a
steepest descent analysis of the above RH problem for values of $t \neq t^*$.
In this paper we develop the steepest descent analysis at the critical time.

\subsection{Statement of results}

\begin{figure}[t]
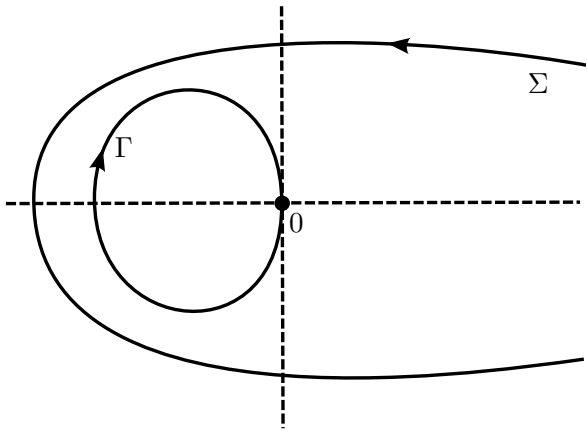

\centering 
\begin{overpic}[scale=1.6]%
{pathsforkernelNew}%
\put(49,34){$0 $}
  \put(19,47){$\Gamma  $}
  \put(90,58){$\Sigma  $}
    \end{overpic}
\caption{The contours of integration $\Gamma$ and $\Sigma$ used 
in the definition of the critical kernel \eqref{Kcritical}.
The contour $\Gamma$ consists of a closed loop in the left half-plane
tangent to the origin and is oriented clockwise. The contour $\Sigma$ is
an unbounded loop oriented counterclockwise and encircling $\Gamma$. 
}
\label{fig:pathsforkernel}
\end{figure}

The main result of our paper is the following theorem.
\begin{theorem}\label{corrkernel}
Let $K_n$ be the correlation kernel \eqref{defK} for the positions
of the rescaled non-intersecting squared Bessel paths starting at $a > 0$
at time $0$ and ending at zero at time $1$. 
Let $t^* = a/(a+1)$ as in \eqref{tstar} and
\begin{equation} \label{cstar}
	c^{*} = t^{*} (1-t^{*}) = \frac{a}{(a+1)^2}.
\end{equation}

Then we have, for every fixed $\tau\in\R$, and $x,y>0$,
\begin{equation} \label{Kcritaslimit}
	\lim_{n\to\infty}\frac{c^{*}}{n^{3/2}}K_{n}\left(\frac{c^{*}x}{n^{3/2}},
	\frac{c^{*}y}{n^{3/2}}; t^{*} - \frac{c^* \tau}{\sqrt{n}} 	\right) = K^{\text{crit}}_{\alpha} (x,y;\tau),
\end{equation}
where $K^{\text{crit}}_{\alpha}$ is the kernel 
\begin{equation}
\label{Kcritical}
	K^{\text{crit}}_{\alpha} (x,y;\tau) = \frac{1}{(2\pi i)^2}\, \int_{t\in \Gamma} \int_{s\in \Sigma} \frac{t^{\alpha}}{s^{\alpha}} 
	e^{\tau/t+1/(2t^{2})-\tau/s-1/(2s^{2})}e^{xt-ys}\, \frac{dt ds}{s-t}.
\end{equation}
The contours $\Gamma$ and $\Sigma$ in \eqref{Kcritical} are as in Figure~\ref{fig:pathsforkernel}.
The fractional powers $s^{\alpha}$ and $t^{\alpha}$ in \eqref{Kcritical} are defined with a 
branch cut on the positive semi-axis, i.e., $0 < \arg s, \arg t < 2\pi$.
\end{theorem}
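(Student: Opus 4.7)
The plan is to apply the Deift--Zhou nonlinear steepest descent method to the RH problem for $Y$ at the critical time $t = t^{*}$, following the general scheme of \cite{KMW} but replacing the local parametrix at the origin by a new one tailored to the soft-to-hard edge transition. I will need successive transformations $Y \mapsto T \mapsto S \mapsto R$, together with a global parametrix $M$ and local parametrices near each endpoint of the limiting support of $\rho(\,\cdot\,;t^{*})$.

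The first transformation $Y \mapsto T$ normalizes the behavior at infinity using $g$-functions coming from the vector equilibrium problem that governs the limiting density $\rho(x;t^{*})$ in \cite{KMW}. At the critical time the support of the first component touches the hard edge $x=0$ non-transversally, and the cubic spectral curve associated with the vector equilibrium problem acquires a singularity there. This degeneracy is the source of the $n^{3/2}$-scaling of $x,y$ and of the $\sqrt{n}$-scaling of $\tau$ in \eqref{Kcritaslimit}. The second transformation $T \mapsto S$ opens lenses around the cuts in the standard way, turning the oscillatory jumps on $[0,\infty)$ into jumps that are exponentially small off the real axis.

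The global parametrix $M$ is built explicitly from the three-sheeted Riemann surface of the spectral curve, as in \cite{KMW}. Away from the origin, the usual Airy parametrix handles the soft edges. The crux of the proof is the construction of a new local parametrix $P$ in a fixed disk around $0$ whose entries are solutions of a particular third-order linear ODE, admitting integral representations of the form
\begin{equation*}
   \psi_{j}(\zeta;\tau) = \int_{\gamma_{j}} s^{\pm\alpha}\,
   \exp\!\left( \pm\frac{\tau}{s} \pm \frac{1}{2s^{2}} \pm \zeta s \right) \frac{ds}{s},
\end{equation*}
taken along suitably chosen contours $\gamma_{j}$. These functions must be assembled into a $3\times 3$ matrix with exactly the jump structure of $S$ near $0$ and the endpoint behavior prescribed by \eqref{Yedge}. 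The main technical obstacle, which I expect to carry the bulk of the work, is to verify the matching condition $P M^{-1} = I + \O(n^{-1/2})$ on the boundary of the disk; this will simultaneously identify the local conformal map and force the double scaling $x \mapsto c^{*}x/n^{3/2}$, $t \mapsto t^{*} - c^{*}\tau/\sqrt{n}$ used in \eqref{Kcritaslimit}.

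Once all parametrices are in place, setting $R = S P^{-1}$ inside the disk around $0$, $R = S(\text{Airy})^{-1}$ inside the disks at the soft edges, and $R = S M^{-1}$ elsewhere yields a small-norm RH problem with $R = I + \O(n^{-1/2})$ uniformly on $\C$. Unfolding the chain $R \mapsto S \mapsto T \mapsto Y$ and substituting into \eqref{defK}, the contributions of $M$ and $R$ cancel in the limit, leaving only the contribution from the model problem solved by $P$. Rewriting that contribution in the variables of the integral representations above yields a double contour integral that, after the natural change of variables, is exactly \eqref{Kcritical}; the contours $\Gamma$ and $\Sigma$ of Figure~\ref{fig:pathsforkernel} appear as the images of two of the $\gamma_{j}$ under the local conformal map.
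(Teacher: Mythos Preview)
Your outline follows the right overall scheme, but the step you flag as ``the main technical obstacle'' is in fact a genuine gap: the matching condition $P M^{-1} = I + \O(n^{-1/2})$ on the boundary of a \emph{fixed} disk around $0$ does not hold in this problem. If you carry out the construction of the local parametrix from the third-order ODE as you describe, the best you obtain on $|z|=r$ is
\[
   P(z) M^{-1}(z) \;=\; I \;-\; \frac{h_n(z)}{z}\,\mathcal{M}_\alpha \;+\; \text{(small)},
\]
where $h_n(z)$ is bounded in the double scaling regime and $\mathcal{M}_\alpha$ is a fixed rank-one matrix; the middle term does not tend to zero as $n\to\infty$. Two modifications are needed. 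First, the disk around $0$ must shrink at rate $n^{-1/2}$; this is forced because the local model $\Phi_\alpha(\,\cdot\,;\tau)$ depends on an auxiliary parameter $\tau(z)$ that is analytic but not constant, and only on $|z|=n^{-1/2}$ does $n^{1/2}\tau(z)$ stay bounded. Second, even on this shrinking circle the jump $J_S$ is only $I+\O(1)$, so one cannot pass directly to a small-norm problem. The rescue is that $\mathcal{M}_\alpha^2=0$ (a consequence of a quadratic relation among the constants in the global parametrix), which lets you remove the offending term by a further \emph{global} transformation
\[
   R(z) = S(z)\Bigl(I - \tfrac{h_n(0)}{z}\,\mathcal{M}_\alpha\Bigr) \quad \text{for } |z|>n^{-1/2},
   \qquad
   R(z) = S(z)\Bigl(I + \tfrac{h_n(z)-h_n(0)}{z}\,\mathcal{M}_\alpha\Bigr) \quad \text{for } |z|<n^{-1/2}.
\]
After this, the jumps are $I+\O(n^{-1/6})$, not $\O(n^{-1/2})$, and unfolding to the kernel requires controlling the analytic prefactor $E_n$ and the nilpotent correction separately (they individually blow up like $n^{1/2}$ but their product with $\mathcal{M}_\alpha$ stays bounded, again by the nilpotency). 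Without these two ingredients---the shrinking disk and the global nilpotent correction---the final RH problem is not small-norm and the argument does not close.

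A smaller point: your first transformation ``$Y\mapsto T$ using $g$-functions from the vector equilibrium problem'' skips a preliminary step. The weights in the jump of $Y$ contain modified Bessel functions $I_\alpha, I_{\alpha+1}$, and one first needs a transformation (called $Y\mapsto X$ in \cite{KMW}) based on Bessel-function identities to replace them by elementary exponentials before any $g$-function normalization can be applied.
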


We prove Theorem~\ref{corrkernel} by an asymptotic analysis
of the RH problem \ref{rhpforY} by means of the steepest descent analysis
of Deift and Zhou, as we did in  \cite{KMW} for the non-critical times.

At a certain stage in the analysis we have to construct a local parametrix
at the origin $x=0$ (the hard edge). This was done in \cite{KMW} with 
the Bessel parametrix. We also
had to construct an Airy parametrix at another position (a soft edge). In the critical
case that we are considering in this paper this other position coincides with
the origin. The coalescing of the soft edge with the hard edge leads to
the construction of a new local parametrix at the origin. The construction
uses a new model Riemann-Hilbert problem that we describe in the next
subsection.
The functions that appear in the model RH problem ultimately lead to
the expression \eqref{Kcritical} for the limiting kernels.

\subsection{Riemann-Hilbert problem}

The model RH problem is defined on the contour $\Sigma_{\Phi}$ shown in Figure~\ref{fig:localanalysis2bis}.
It consists of the six rays $\arg z = 0, \pm \pi/4, \pm 3 \pi/4$, oriented from left to right.

\begin{figure}[th]
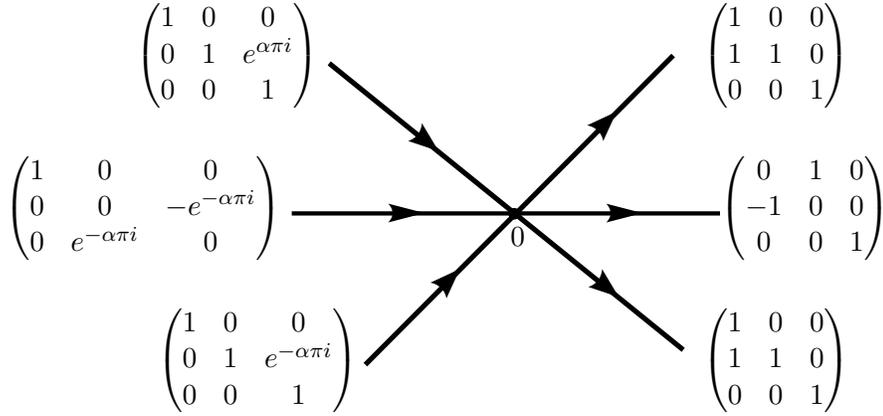

\centering \vspace*{1cm}

\begin{overpic}[scale=1.2]%
{localanalysis2}%
   \put(51,28){$0$}
   \put(99,35){$\begin{pmatrix} 0 & 1 & 0 \\ -1 & 0 & 0 \\ 0 & 0 & 1 \end{pmatrix}$}
   \put(95,70){$\begin{pmatrix} 1 & 0 & 0 \\ 1 & 1 & 0 \\ 0  & 0 & 1 \end{pmatrix}$}
   \put(95,0){$\begin{pmatrix} 1 & 0 & 0 \\ 1 & 1 & 0 \\ 0  & 0 & 1 \end{pmatrix}$}
   
   \put(-65,35){$\begin{pmatrix} 1 & 0 & 0 \\ 0 & 0 & -e^{-\alpha \pi i} \\ 0 & e^{-\alpha \pi i} & 0 \end{pmatrix}$}
   \put(-35,70){$\begin{pmatrix} 1 & 0 & 0 \\ 0 & 1 & e^{\alpha \pi i} \\ 0 & 0 & 1 \end{pmatrix}$}
   \put(-30,0){$\begin{pmatrix} 1 & 0 & 0 \\ 0 & 1 & e^{-\alpha \pi i} \\ 0 & 0 & 1 \end{pmatrix}$}
   
   \end{overpic}
   \vspace*{5mm}
   
\caption{Contour $\Sigma_{\Phi}$ and jump matrices $J_{\Phi_{\alpha}}$ in the RH problem for $\Phi_{\alpha}$.}
\label{fig:localanalysis2bis}
\end{figure}

\begin{rhproblem} \label{rhpforPhi}
Let $\alpha > -1$ and $\tau \in \mathbb C$.
The RH problem is to find $\Phi_{\alpha} = \Phi_{\alpha}(\cdot; \tau): 
\C \setminus\Sigma_{\Phi} \to \C^{3\times3}$ such that
\begin{enumerate}
\item $\Phi_{\alpha}$ is analytic in $\mathbb{C} \setminus \Sigma_{\Phi}$.
\item $\Phi_{\alpha}$ has boundary values on each part of  $\Sigma_{\Phi} \setminus \{ 0 \}$
satisfying
\begin{equation} \label{Phijumps}
	 \Phi_{\alpha,+}(z;\tau) = \Phi_{\alpha,-}(z;\tau) J_{\Phi_{\alpha}}(z), \qquad z \in \Sigma_{\Phi} 
	 \end{equation}
	where the jump matrices $J_{\Phi_{\alpha}}$ are shown in Figure~\ref{fig:localanalysis2bis}.
\item 
Let $\omega = e^{2 \pi i/3}$ and
\begin{equation} \label{thetak}
	\theta_k(z) = \theta_k(z; \tau) = \frac{3}{2} \omega^{2k} z^{2/3}
		+ \omega^k \tau z^{1/3}, \qquad k = 1,2,3. \end{equation}
Then as $z \to \infty$, we have
\begin{multline} \label{Phiasymptotics1} 
	\Phi_{\alpha}(z; \tau) =
    \frac{i z^{-\alpha/3}}{\sqrt{3}}
    	\begin{pmatrix} z^{1/3} & 0 & 0 \\ 0 & 1 & 0 \\ 0 & 0 & z^{-1/3} \end{pmatrix}
    \begin{pmatrix} \omega & \omega^2 & 1 \\ 1 & 1 & 1 \\ \omega^2 & \omega & 1 \end{pmatrix}
    \begin{pmatrix} e^{\alpha \pi i/3} & 0 & 0 \\ 0 & e^{-\alpha \pi i/3} & 0 \\ 0 & 0 & 1 \end{pmatrix} \\
    \left(I +  \O(z^{-1/3})\right)
    \begin{pmatrix} e^{\theta_1(z;\tau)} & 0 & 0 \\ 0 & e^{\theta_2(z; \tau)}
    & 0 \\ 0 & 0 & e^{\theta_3(z;\tau)} \end{pmatrix}, \quad \Im z > 0,
    \end{multline}
and 
\begin{multline} \label{Phiasymptotics2} 
	\Phi_{\alpha}(z; \tau) =
    \frac{i z^{-\alpha/3}}{\sqrt{3}}
    	\begin{pmatrix} z^{1/3} & 0 & 0 \\ 0 & 1 & 0 \\ 0 & 0 & z^{-1/3} \end{pmatrix}
    \begin{pmatrix} \omega^2 & -\omega & 1 \\ 1 & -1 & 1 \\ \omega & -\omega^2 & 1 \end{pmatrix}
    \begin{pmatrix} e^{-\alpha \pi i/3} & 0 & 0 \\ 0 & e^{\alpha \pi i/3} & 0 \\ 0 & 0 & 1 \end{pmatrix} \\
    \left(I +  \O(z^{-1/3})\right)
    \begin{pmatrix} e^{\theta_2(z;\tau)} & 0 & 0 \\ 0 & e^{\theta_1(z; \tau)}
    & 0 \\ 0 & 0 & e^{\theta_3(z;\tau)} \end{pmatrix}, \quad \Im z < 0,
    \end{multline}
\item As $z \to 0$ we have
\begin{align} \label{Phiat01}
 \Phi_{\alpha}(z;\tau) \begin{pmatrix} z^{\alpha} & 0 & 0 \\ 0 & z^{\alpha} & 0 \\ 0 & 0 & 1 \end{pmatrix} & = \O(1),
 	\qquad 0 < |\arg z| < \pi/4, \\ \label{Phiat02}
 \Phi_{\alpha}(z;\tau) \begin{pmatrix} 1 & 0 & 0 \\ 0 & z^{\alpha} & 0 \\ 0 & 0 & 1 \end{pmatrix} & = \O(1),
  \qquad \pi/4 < |\arg z| < 3 \pi/4, \\ \label{Phiat03}
 \Phi_{\alpha}(z;\tau) \begin{pmatrix} 1 & 0 & 0 \\ 0 & z^{\alpha} & 0 \\ 0 & 0 & z^{\alpha} \end{pmatrix} & = \O(1),
 	\qquad 3 \pi/4 < |\arg z| < \pi.
	\end{align}
\end{enumerate}
\end{rhproblem}

Note that the parameter $\tau$ appears in \eqref{thetak}
and in the asymptotic conditions \eqref{Phiasymptotics1} and \eqref{Phiasymptotics2}
of the RH problem.

We prove the following.

\begin{theorem} \label{thm:PhiRHP} 
Let $\alpha > -1$ and $\tau \in \mathbb C$.
The RH problem \ref{rhpforPhi} for $\Phi_{\alpha}$ has a unique solution with 
\begin{equation} \label{detPhi} 
	\det \Phi_{\alpha}(z; \tau) =  z^{-\alpha}, \qquad z \in \mathbb C \setminus \Sigma_{\Phi}.
	\end{equation}
The critical kernel \eqref{Kcritical} satisfies
\begin{equation} \label{KcritRHP}
K^{crit}_{\alpha}(x,y;\tau)
    = \frac{1}{2\pi i(x-y)} \begin{pmatrix} -1 & 1 & 0
    \end{pmatrix} \Phi_{\alpha,+}^{-1}(y;\tau) \Phi_{\alpha,+}(x;\tau) \begin{pmatrix} 1 \\ 1
    \\ 0 \end{pmatrix},
\end{equation}
for $x, y > 0$ and $\tau \in \mathbb R$.
\end{theorem}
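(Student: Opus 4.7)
My plan is to prove Theorem~\ref{thm:PhiRHP} by constructing $\Phi_\alpha$ explicitly as a $3\times 3$ array of contour integrals of a single phase function, reading off the determinant and uniqueness from that construction, and then identifying \eqref{Kcritical} with the right-hand side of \eqref{KcritRHP}. The organizing observation is that all three columns of $\Phi_\alpha$ come from integrals of
\[ \psi_z(s;\tau)=zs+\tau/s+1/(2s^2), \]
whose saddle-point equation $\psi_z'(s)=z-\tau/s^2-1/s^3=0$ has three branches $s_k(z)\sim \omega^{-k}z^{-1/3}$ with $\psi_z(s_k)=\theta_k(z;\tau)$ to leading order; the three saddles are exactly the source of the three modes $e^{\theta_k}$ in \eqref{thetak}, explaining the $3\times 3$ structure.

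For existence, I would set the $(i,j)$ entry of $\Phi_\alpha$ equal to a contour integral of the form $\int_{\gamma_j} s^{\epsilon_j\alpha + m_i}\, e^{\psi_z(s;\tau)}\, ds/s$, where the column index $j$ determines both the contour $\gamma_j$ (a steepest-descent path through the single saddle $s_j$, depending on the sector of $z$) and the sign $\epsilon_j \in \{+,-\}$ in $s^{\pm\alpha}$, while the row index $i$ corresponds to a power shift $m_i$ producing three linearly independent integrals, essentially obtained by differentiating in $z$. Bounded loops around the origin correspond to the $\Gamma$-type contour of Figure~\ref{fig:pathsforkernel} and unbounded loops to the $\Sigma$-type contour. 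Each of the six jumps in Figure~\ref{fig:localanalysis2bis} then arises from rerouting the steepest-descent contour as $z$ crosses a ray of $\Sigma_\Phi$, which expresses a new column as a linear combination of old columns whose coefficients are exactly the off-diagonal entries of the jump matrix; the $e^{\pm \alpha\pi i}$ factors arise from the $s^{\pm\alpha}$ branch cut on $(0,\infty)$, while the antidiagonal block on the negative real axis reflects a relabeling of the two saddles dominant in the left half plane. Standard steepest descent on each $\gamma_j$ produces the $e^{\theta_j(z;\tau)}$ exponential, and the Vandermonde-like matrix built from the leading behavior $s_j\sim\omega^{-j}z^{-1/3}$ collapses, after combining with the $z^{-\alpha/3}$, diagonal, and diagonalization factors, to the prefactors in \eqref{Phiasymptotics1}--\eqref{Phiasymptotics2}. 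The local bounds \eqref{Phiat01}--\eqref{Phiat03} follow by examining the integrals as $z\to 0$, the growth being governed by the $s^{\pm\alpha}$ factor.

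The determinant identity is then bookkeeping: every jump matrix of Figure~\ref{fig:localanalysis2bis} has determinant one except the antidiagonal block on the negative real axis, whose determinant is $e^{-2\alpha\pi i}$, so $\det\Phi_\alpha\cdot z^{\alpha}$ (with principal branch of $z^{\alpha}$) is jump-free; \eqref{Phiat01}--\eqref{Phiat03} make the singularity at zero removable, and \eqref{Phiasymptotics1} yields $\det\Phi_\alpha = z^{-\alpha}(1+\O(z^{-1/3}))$ at infinity (using $\theta_1+\theta_2+\theta_3=0$ and the elementary evaluation $\det\bigl(\begin{smallmatrix}\omega & \omega^2 & 1\\ 1 & 1 & 1\\ \omega^2 & \omega & 1\end{smallmatrix}\bigr)=3i\sqrt{3}$), so Liouville gives \eqref{detPhi}. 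Uniqueness is then standard: given two solutions $\Phi^{(1)},\Phi^{(2)}$, the ratio $R=\Phi^{(1)}(\Phi^{(2)})^{-1}$ has its jumps cancel, has at worst a removable singularity at zero (using \eqref{Phiat01}--\eqref{Phiat03} together with $\det\Phi^{(2)}=z^{-\alpha}$ to bound $(\Phi^{(2)})^{-1}$ entrywise), and tends to $I$ at infinity, hence $R\equiv I$.

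Finally, for the kernel representation \eqref{KcritRHP} I would plug the explicit integral formulas into the right-hand side. The relation imposed by the jump on $(0,\infty)$ allows the vector $\Phi_{\alpha,+}(x;\tau)\bigl(\begin{smallmatrix}1\\1\\0\end{smallmatrix}\bigr)$ to collapse into a single $\Gamma$-type contour integral of $t^{\alpha}e^{xt+\tau/t+1/(2t^2)}$, and the row $\bigl(\begin{smallmatrix}-1 & 1 & 0\end{smallmatrix}\bigr)\Phi_{\alpha,+}^{-1}(y;\tau)$, computed via Cramer's rule using $\det\Phi_\alpha=z^{-\alpha}$, collapses into a single $\Sigma$-type integral of $s^{-\alpha}e^{-ys-\tau/s-1/(2s^2)}$. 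Multiplying the two, dividing by $2\pi i(x-y)$, and identifying $1/(x-y)$ with a Cauchy-type residue as the two single integrals recombine yields the $1/(s-t)$ factor and hence \eqref{Kcritical}. The main obstacle throughout is the initial step of simultaneously fixing the signs $\epsilon_j$, shifts $m_i$, normalizing constants, and sector-dependent contours so that \emph{all} six jumps, both half-plane asymptotic expansions \eqref{Phiasymptotics1}--\eqref{Phiasymptotics2}, and the three separate endpoint behaviors \eqref{Phiat01}--\eqref{Phiat03} match at once; once that construction is pinned down, the remaining parts of the theorem follow mechanically.
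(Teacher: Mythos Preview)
Your proposal follows essentially the same route as the paper: an explicit Wronskian-type construction of $\Phi_\alpha$ from contour integrals of $t^{\alpha-3}e^{\tau/t+1/(2t^2)+zt}$, standard Liouville arguments for \eqref{detPhi} and uniqueness, and then unwinding the right-hand side of \eqref{KcritRHP} via the inverse matrix to reach the double integral \eqref{Kcritical}.

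Two points of friction are worth flagging. First, your sign $\epsilon_j\in\{+,-\}$ on $s^{\pm\alpha}$ is a red herring: in the paper's construction all columns carry the \emph{same} factor $t^{\alpha-3}$, and the $e^{\pm\alpha\pi i}$ coefficients in the jump matrices come from the \emph{branch} of $t^{\alpha}$ chosen on each contour (see \eqref{integralForp1}--\eqref{integralForp4}), not from a sign flip on the exponent. Trying to mix $s^{\alpha}$ and $s^{-\alpha}$ in the columns of $\Phi_\alpha$ will not reproduce the jumps of Figure~\ref{fig:localanalysis2bis}. Second, your last step---``identifying $1/(x-y)$ with a Cauchy-type residue\ldots yields the $1/(s-t)$ factor''---is too glib. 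What actually happens is that $(-1,1,0)\Phi_{\alpha,+}^{-1}(y)\Phi_{\alpha,+}(x)(1,1,0)^T$ becomes the bilinear concomitant $[p_2(x),q_3(y)]$ of \eqref{def:concomitantXY}, which is a \emph{sum} of products like $q_3''(y)p_2(x)$, $q_3'(y)p_2'(x)$, etc. The paper then needs the nontrivial integral identity \eqref{identityconcomitant} and a change-of-variables trick (the function $\widetilde K$ of \eqref{Ktilde}) to collapse this sum into the single $\frac{1}{s-t}$ kernel of \eqref{Kcritical}; no residue is taken. This is the one genuinely delicate step and your sketch does not yet capture it.
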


The uniqueness statement in Theorem~\ref{thm:PhiRHP} follows from standard arguments
where one first proves \eqref{detPhi}. The existence of a solution follows
from an explicit construction of $\Psi_{\alpha}$, given in Proposition 5.2, in terms of solutions of the third order ODE
\begin{equation} \label{ODEforp} 
	z p''' + \alpha p'' - \tau p' - p = 0. 
	\end{equation}
A particular solution of this equation is given by
\begin{equation} \label{defp} p(z) = \int_{\Gamma} t^{\alpha-3} e^{\tau/t} e^{1/(2t^2)} e^{zt} dt
\end{equation}
where $\Gamma$ is the closed contour shown in Figure~\ref{fig:pathsforkernel}.

The inverse matrix $\Phi_{\alpha}^{-1}$ 
is built out of solutions of the adjoint equation
\begin{equation} \label{ODEforq} 
	z q''' + (3-\alpha) q'' - \tau q' + q = 0
	\end{equation}
which has the special solution
\begin{equation} \label{defq} 
	q(z) = \int_{\Sigma} s^{-\alpha} e^{-\tau/s} e^{-1/(2s^2)} e^{-zs} ds 
	\end{equation}
where $\Sigma$ is also shown in Figure~\ref{fig:pathsforkernel}.

In terms of these functions the kernel \eqref{Kcritical}, \eqref{KcritRHP}, can
also be written as
\begin{multline}  \label{bilinear}
	2 \pi i (x-y) K^{crit}_{\alpha}(x,y; \tau) \\ 
	= \left[q'' (y) -(\alpha-2) q' (y) -\tau q (y) \right] p (x) 
		+ \left[ -yq' (y) + (\alpha-1)q (y) \right] p'(x) \\
	+ yq (y)p'' (x).
	\end{multline}
	
For $y=x$ the right-hand side of \eqref{bilinear} is the bilinear concomitant 
\cite{Bert,Ince} which is constant for
any two solutions $p$ and $q$ of the differential equations \eqref{ODEforp}, \eqref{ODEforq}, 
and which turns out to be zero for the two particular solutions \eqref{defp} and \eqref{defq}.

\begin{remark}
There are  solutions of the differential equations \eqref{ODEforp} and \eqref{ODEforq} that can
be written as integrals of Bessel functions. In particular, we have that  
\begin{equation} \label{altpandq}
\begin{aligned}
\mathfrak p(z) & = z^{-\alpha/2} \int_0^{+\infty} u^{\alpha/2} e^{-\tau u - u^2/2} J_\alpha(2\sqrt{z u} )\, du \quad \text{and} \\ 
\mathfrak q(z)& = z^{\alpha/2} \int_{-i\infty}^{+i \infty} v^{-\alpha/2} e^{\tau v + v^2/2} J_\alpha(2\sqrt{z v} )\, dv,
\end{aligned}
\end{equation}
are solutions of  \eqref{ODEforp} and \eqref{ODEforq}, respectively, where $J_\alpha$ is the Bessel function of the first kind of order $\alpha$.

Based on a similarity with formulas by Desrosiers and Forrester \cite[Proposition 5]{DesFor} for a perturbed chiral GUE,  
we suspect that it should be possible to write an alternative expression for the 
critical kernel in \eqref{Kcritical} in terms of  the functions \eqref{altpandq}, namely 
$$
\int_{u\in \R_+} \int_{v\in  i\R} \left(\frac{u}{v}\right)^{\alpha/2} e^{\frac{v^2}{2}-\frac{u^2}{2}+\tau v-\tau u} J_\alpha(2\sqrt{x u} ) J_\alpha(2\sqrt{y v} )\, \frac{du d v}{u-v}.
$$ 
Unfortunately, we have not been able to make this identification. 
Observe however that for $\alpha=-1/2$ the double integral above reduces to the so-called symmetric Pearcey kernel
 $\mathcal K(\sigma_1; \sigma_2; \eta)$, with
$\sigma_1=x^2/\sqrt{2}$,  $\sigma_2=y^2/\sqrt{2}$,  $\eta=\sqrt{2}\tau$. The correlation kernel
$$
\mathcal K(\sigma_1; \sigma_2; \eta)=\frac{2}{\pi^2 i}\, \int_{u\in C} \int_{x\in \R_+} e^{-\eta x^2+\eta u^2 -x^4 + u^4} \cos(\sigma_1 x) \cos(\sigma_2 u) \frac{u dx du}{u^2-x^2},
$$
where $C$ is the contour in $\C$ consisting of rays from $\infty e^{i \pi/4}$ to $0$ to $\infty e^{-i\pi /4}$,
was introduced by Borodin and Kuan in \cite{BoKu};  the authors point out the possible connection with the 
non-intersecting Bessel paths in the critical regime, as it seems to be the case.
\end{remark}


\section{First and second transformation} \label{sec2}

The steepest descent analysis consists of a sequence of transformations
\[ Y \mapsto X \mapsto U \mapsto T \mapsto S \mapsto R \]
which leads to a RH problem for $R$, normalized at infinity
and with jump matrices that are close to the identity matrix if $n$ is large.

We start from the RH problem \ref{rhpforY} for $Y$, 
stated in the introduction. The RH problem depends on the
parameters $n$ and $t$. We assume that $n$ is large, and $t$ is close to
the critical value $t^*$. Eventually we will take
the double scaling limit 
\begin{equation} \label{doublescaling} 
	n \to \infty, \qquad t \to t^*, \quad \textrm{ such that } \quad \sqrt{n} (t^* - t) = c^* \tau \quad  \text{ remains fixed}. 
	\end{equation} 
But throughout the transformations in Sections \ref{sec2}--\ref{final},
we assume that $n$ and $t$ are finite and fixed.

The first transformation is the same as in \cite{KMW}.

\subsection{The first transformation}

The first transformation $Y \mapsto X$ is based on special properties of the 
modified Bessel functions that appear in the jump matrix \eqref{Yjump} via the
two weights \eqref{weights1}. The result of the first transformation will be that
the jump matrix on $[0,\infty)$ is simplified at the expense of
introducing jumps on $(-\infty,0)$ and on  two unbounded contours $\Delta_2^{\pm}$
that are shown in Figure~\ref{fig:first_transformation}. The contours $\Delta_2^{\pm}$
are the boundaries of an unbounded lense around the negative real axis.

\begin{figure}[t]
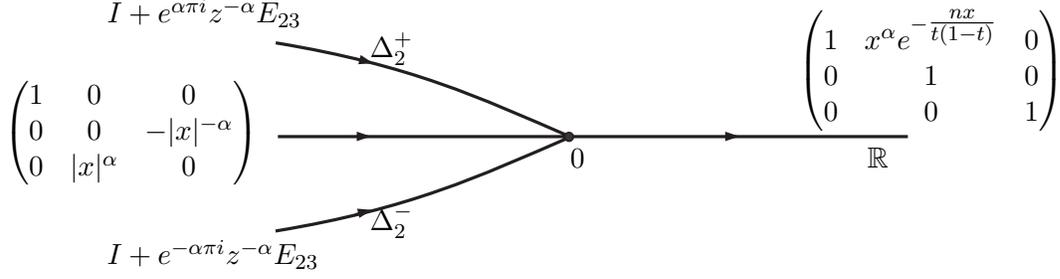

\centering \begin{overpic}[scale=1.2]%
{firstcontour}%
      \put(90,14){$\R $}
       \put(18,30){$\Delta_2^+ $}
       \put(18,5){$\Delta_2^- $}
       \put(47,14){$0$}
       \put(-35,18){$\begin{pmatrix} 1 & 0 & 0 \\ 0 & 0 & -|x|^{-\alpha} \\
        0 & |x|^{\alpha} & 0 \end{pmatrix}$}
        \put(-20,35){$I + e^{\alpha \pi i} z^{-\alpha} E_{23}$}
        \put(-20,0){$I + e^{- \alpha \pi i} z^{-\alpha} E_{23}$}
				\put(80,27){$\begin{pmatrix} 1 & x^{\alpha} e^{-\tfrac{nx}{t(1-t)}} & 0 \\
					0 & 1& 0 \\ 0 & 0 & 1 \end{pmatrix}$}
\end{overpic}
\caption{Contour $\Sigma_X = \mathbb R \cup \Delta_2^{\pm}$ and jump matrices $J_X$ 
in the RH problem for $X$.}
\label{fig:first_transformation}
\end{figure}

Here and in the sequel, $E_{ij}$ denotes
the $3 \times 3$ elementary matrix whose entries are all $0$, except for
the $(i,j)$-th entry, which is $1$.

\begin{definition}
We let $	y_1(z) = z^{(\alpha+1)/2} I_{\alpha + 1}(2 \sqrt{z})$ and 
$y_2(z) = z^{(\alpha+1)/2} K_{\alpha + 1}(2 \sqrt{z})$ 
where $K_{\alpha+1}$ is the modified Bessel function of second kind of order $\alpha + 1$.
Then we define $X$ in terms of $Y$ as follows
\begin{multline} \label{Xdef}
   X (z) = C_1
        Y (z)
        \begin{pmatrix}
        1 & 0 & 0 \\
        0 & 1 & 0 \\
        0 & 0 & \frac{n \sqrt{a}}{t} \end{pmatrix} \\
        \times
        \begin{pmatrix}
        1 & 0 & 0\\
        0 & 2y_{2} \left( \frac{n^2 a z}{t^2} \right) & - z^{-\alpha}y_{1}  \left( \frac{n^2 a z}{t^2} \right)\\
        0 & -2y_{2}' \left( \frac{n^2 a z}{t^2} \right) & z^{-\alpha} y_{1}' \left( \frac{n^2 a z}{t^2} \right)
        \end{pmatrix} 
    \begin{pmatrix}
    1 &0&0\\
    0 &  \left(\frac{t}{n \sqrt{a}}\right)^{\alpha} & 0\\
    0 & 0 & -2\pi i \left(\frac{t}{n \sqrt{a}}\right)^{\alpha}
    \end{pmatrix} \\
    \times 
    \begin{cases} 
    		I -  e^{\alpha \pi i} z^{-\alpha} E_{23}, & \text{for $z$ in the upper part of the lens,}  \\
    		I + e^{-\alpha \pi i} z^{-\alpha} E_{23}, & \text{for $z$ in the lower part of the lens,} \\
    		I & \text{elsewhere.}
    		\end{cases}
\end{multline}
where $C_1$ is some constant matrix, see \cite[Equation (3.12)]{KMW}
for its definition.
\end{definition}

Then $X$ is the unique solution of the following RH problem, see \cite[Section 3]{KMW} for details.
\begin{rhproblem} \label{rhpforX} \

\begin{enumerate}
\item $X$ is defined and analytic in $\mathbb{C} \setminus \Sigma_X$ where $\Sigma_X = \R \cup \Delta_2^{\pm}$.
\item On $\Sigma_X$ we have the jump 
\begin{equation} \label{Xjump}
	X_+ = X_- J_X 
	\end{equation}
where the jump matrices $J_X$ are as in Figure~\ref{fig:first_transformation}.
\item As $z \to \infty$ we have
\begin{multline} \label{Xasymptotics}
   X(z) =
   \left(I + \mathcal{O}\left(\frac{1}{z}\right) \right)
    \begin{pmatrix}
    1 & 0 & 0\\
    0 & z^{(-1)^n/4} & 0 \\
    0 & 0 &  z^{- (-1)^n/4}
    \end{pmatrix}
     \begin{pmatrix}
    1 & 0 & 0 \\
    0 & \frac{1}{\sqrt{2}} & \frac{1}{\sqrt{2}} i \\
    0 & \frac{1}{\sqrt{2}} i & \frac{1}{\sqrt{2}}
    \end{pmatrix} \\
    \begin{pmatrix}
    1 & 0 & 0 \\
    0 & z^{\alpha/2} & 0 \\
    0 & 0 & z^{-\alpha/2}
    \end{pmatrix}
\begin{pmatrix}
            z^{n}  & 0 & 0 \\
            0 & z^{-n/2} e^{-2n\sqrt{az}/t} & 0 \\
            0 & 0 & z^{-n/2} e^{2n\sqrt{az}/t}
            \end{pmatrix}.
\end{multline}
\item $X(z)$ has the same behavior as $Y(z)$ near
the origin, see \eqref{Yedge}, as $z\to 0$ from outside
the lens around $(-\infty, 0]$. If $z \to 0$ within the lens around
$(-\infty, 0]$, then 
\begin{equation} \label{Xedge2}
    X(z) = \left\{ \begin{array}{cl}
    \O \begin{pmatrix} 1 & |z|^{\alpha} & 1 \\  1 & |z|^{\alpha} & 1 \\ 1 & |z|^{\alpha} & 1 \end{pmatrix}
    & \textrm{if } \alpha < 0, \\
    \O \begin{pmatrix} 1 & \log |z| & \log |z| \\ 1 & \log |z| & \log |z| \\ 1 & \log |z| & \log |z| \end{pmatrix}
    & \textrm{if } \alpha = 0, \\
    \O\begin{pmatrix} 1 & 1 & |z|^{-\alpha} \\ 1 & 1 & |z|^{-\alpha} \\1 & 1 & |z|^{-\alpha} \end{pmatrix}
    & \textrm{if } \alpha > 0.
    \end{array} \right. \end{equation}
\end{enumerate}
\end{rhproblem}

\subsection{The Riemann surface}

In the second transformation we are going to use certain functions that come
from a Riemann surface. In  \cite[Section 4]{KMW} we used the  Riemann surface 
associated with the algebraic equation
\begin{equation} \label{RSequation1}
    z = \frac{1-k \zeta}{\zeta(1-t(1-t)\zeta)^2}, \qquad k = (1-t)(t-a(1-t)).
    \end{equation}
This equation was derived from a formal WKB analysis of the differential equation
\begin{multline} \label{DEforMOP2}
  zy'''(z) + \left((2+\alpha) - \frac{2nz}{t(1-t)}\right) y''(z) \\
  +
   \left(\frac{n^2z}{t^2(1-t)^2}  + \frac{n(n-\alpha-2)}{t(1-t)} - \frac{an^2}{t^2}\right)y'(z)
   - \frac{n^3}{t^2(1-t)^2} y (z)=0,
\end{multline}
see \cite{CV2} and \cite[Equation (2.21)]{KMW}, that is satisfied by 
the multiple orthogonal polynomials associated with the weights
\eqref{weights1}.

There are three inverse functions to \eqref{RSequation1}, which behave
as $z\to\infty$ as
\begin{align}
\zeta_{1} (z) & = \frac{1}{z}+ \mathcal{O} \left(\frac{1}{z^{2}} \right), \nonumber\\
\zeta_{2} (z) & = \frac{1}{t
(1-t)}-\frac{\sqrt{a}}{tz^{1/2}}-\frac{1}{2z} + \mathcal{O}
\left(\frac{1}{z^{3/2}} \right), \label{zeta}\\
\zeta_{3} (z) & = \frac{1}{t
(1-t)}+\frac{\sqrt{a}}{tz^{1/2}}-\frac{1}{2z}+ \mathcal{O}
\left(\frac{1}{z^{3/2}} \right). \nonumber
\end{align}
At critical time
$t=t^{*}$, we have $k=0$ and equation \eqref{RSequation1} reduces to
\begin{equation} \label{RSequation3}
    z = \frac{1}{\zeta(1- c^* \zeta)^2}, \qquad c^* = t^*(1-t^*).
    \end{equation}
Then, the corresponding Riemann surface has two real branch points, $0$
and $q^{*} = 27c^*/4 >0$, $0$ being degenerate (of order 2), and 
$q^{*}$ being simple. The point at infinity is also a simple branch point
of the Riemann surface. 

\begin{figure}[t]
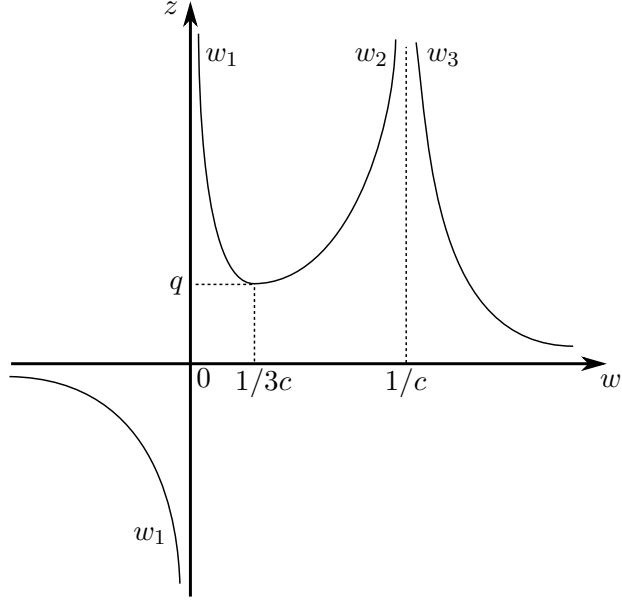

\centering 
\begin{overpic}[scale=0.7]%
{graph}%
  \put(33,90){$w_{1} $}
   \put(21,10){$w_{1} $}
  \put(58,90){$w_{2} $}
   \put(70,90){$w_{3} $}
   \put(31.5,35.5){$0 $}
   \put(27,52){$q $} 
   \put(63,35){$1/c $}
    \put(38,35){$1/3c $}
     \put(99,35.5){$w $}
     \put(26,98){$z $}
 \end{overpic}
\caption{Plot of function $z=z(w)$ given by \eqref{eqmodif} for $w\in \R$.}
\label{fig:plot}
\end{figure}

In the present paper, we want to work with a Riemann surface $\RR$ with
a double branch point, even if $t \neq t^*$. Following the approach
of \cite{BK4} we do not consider \eqref{RSequation1} if $t \neq t^*$
but instead consider a modified equation
\begin{equation}\label{eqmodif}
	z=\frac{1}{w (1-cw)^{2}},
\end{equation}
with $c$ some positive number and $w$ a new auxiliary variable. 

\begin{figure}[t]
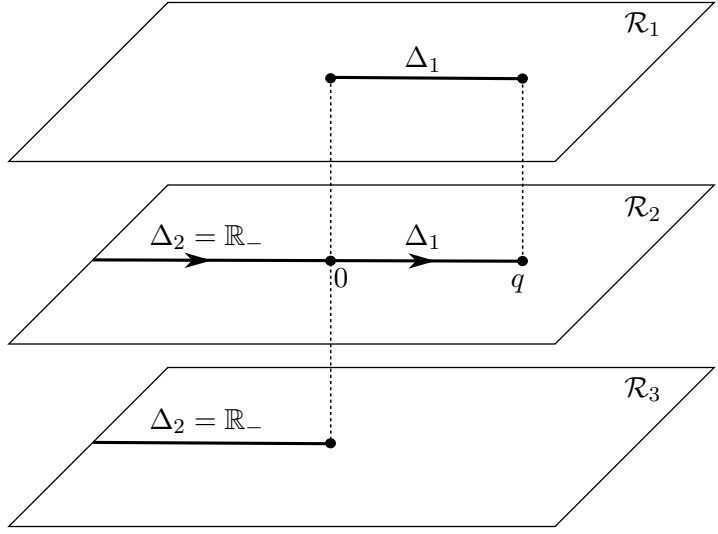

\centering 
\begin{overpic}[scale=0.73]%
{surface1}%
  \put(87,70){$\mathcal R_{1} $}
   \put(87,44){$\mathcal R_{2} $}
    \put(87,18.5){$\mathcal R_{3} $}
    \put(46,34){$0 $}
     \put(71,34){$q $}
      \put(20,40){$\Delta_{2}=\R_{-} $}
       \put(56,40){$\Delta_{1} $}
       \put(20,14){$\Delta_{2}=\R_{-} $}
       \put(56,65){$\Delta_{1} $} 
\end{overpic}
\caption{The Riemann surface $\RR$ used in the steepest descent  analysis. 
The origin is a double branch point for every $t$.}
\label{Riemann1}
\end{figure}

The Riemann surface has simple branch points at 
\begin{equation} \label{qdef} 
	q = 27c/4 
\end{equation}
 and at infinity, and it has a double branch point
at $0$. The sheet structure of $\RR$ can be readily visualized from Figure~\ref{fig:plot} 
and is shown in Figure~\ref{Riemann1}.

The sheets $\RR_{1}$ and $\RR_{2}$ are glued together along the cut
$\Delta_{1}= [0,q]$ and the sheets $\RR_{3}$ and $\RR_{2}$ are
glued together along the cut 
$\Delta_{2}= (0,\infty]$ in the usual crosswise manner. 
There are three inverse functions $w_{k}$, $k=1,2,3$, that behave as
$z\to\infty$ as
\begin{align} 
w_{1} (z) & = \frac{1}{z}+ \mathcal{O} \left(\frac{1}{z^{2}} 
\right), 
\nonumber\\
w_{2} (z) & = \frac{1}{c}-\frac{1}{\sqrt{c}z^{1/2}}-\frac{1}{2z} 
+\mathcal{O} \left(\frac{1}{z^{3/2}} \right), \label{behaviorw}\\
w_{3} (z) & = \frac{1}{c}+\frac{1}{\sqrt{c}z^{1/2}}-\frac{1}{2z}
+\mathcal{O} \left(\frac{1}{z^{3/2}} \right),\nonumber
\end{align}
and which are defined and analytic on $\C\setminus\Delta_{1}$, $\C
\setminus (\Delta_{1}\cup\Delta_{2})$ and $\C \setminus \Delta_{2}$
respectively.

The algebraic function $w=w(z)$ in \eqref{eqmodif} gives a bijection between the 
Riemann surface $\mathcal R$ and the extended complex $w$-plane. Figure~\ref{fig:mapping} 
represents this mapping, along with the domains $\widetilde{\mathcal R}_j = w_j(\mathcal R_j$), 
$j=1,2,3$ (the images of the corresponding sheets of $\mathcal R$) and the points
\[
	w_{q}=w_{2} (q)=\frac{1}{3c}>0,\qquad w_{\infty}=w_{2} (\infty)=\frac{1}{c}>0.
\]
We point out that $w_{2+}(\Delta_{j})$, $j=1,2$, are analytic arcs that extend to 
infinity in the upper half plane, while $w_{2-}(\Delta_{j})$, $j=1,2$, are in the lower half plane.

\subsection{Modified $\zeta$ functions}

We next define modified $\zeta$-functions with the same asymptotic
behavior as $z\to\infty$ as given in \eqref{zeta} up to order $\O(z^{-3/2})$. 

\begin{definition}
For $k=1,2,3$ we define 
\begin{equation} \label{mod-zeta}
	\zeta_{k}=w_{k}+pw_{k}^{2}, 
\end{equation}
where $w_k$ is given by \eqref{behaviorw} and
\begin{equation}\label{defcp}
c=\frac{(1-t)^{2}}{4}\left(\frac{\sqrt{a}}{2}+\sqrt{\frac{a}{4}+\frac{2t}{1-t}}
\right)^{2} \quad \text{and}\quad p=\frac{c^{2}}{t (1-t)}-c.
\end{equation}
\end{definition}

Note that for $t=t^{*}$, the critical time, we have $c=c^{*}=t^{*}(1-t^{*})$ 
and $p=0$, so that we recover in this case the equation
\eqref{RSequation3} and the  $\zeta$-functions defined in \eqref{zeta}.

\begin{lemma}\label{behavior-modified}
For $c$ and $p$ given by \eqref{defcp}, the asymptotic behavior of functions $\zeta_{k}$, 
$k=1, 2, 3$, defined in \eqref{mod-zeta}, as $z\to \infty$, is given by \eqref{zeta}.  
\end{lemma}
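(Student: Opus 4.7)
The plan is to substitute the expansions \eqref{behaviorw} of $w_k$ directly into the definition $\zeta_k = w_k + p w_k^2$ and match coefficients term-by-term against the target expansions in \eqref{zeta}. The specific values of $c$ and $p$ in \eqref{defcp} are exactly what is needed for the match to succeed.

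For $k=1$ the statement is immediate: $w_1(z) = z^{-1} + O(z^{-2})$ gives $p w_1^2 = O(z^{-2})$, so
\[
\zeta_1(z) = z^{-1} + O(z^{-2}),
\]
as required.

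For $k=2$ I would first square the expansion of $w_2$ from \eqref{behaviorw}. Using the three leading coefficients $c^{-1},\, -c^{-1/2},\, -1/2$, a direct computation shows that the coefficient of $z^{-1}$ in $w_2^2$ vanishes (it equals $c^{-1} - c^{-1} = 0$), so
\[
w_2(z)^2 = c^{-2} - 2 c^{-3/2} z^{-1/2} + O(z^{-3/2}).
\]
Combining with $w_2$ itself gives
\[
\zeta_2(z) = \frac{c+p}{c^2} - \frac{c+2p}{c^{3/2}}\, z^{-1/2} - \frac{1}{2z} + O(z^{-3/2}).
\]
Two scalar identities then need to be verified. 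For the constant term, the definition of $p$ in \eqref{defcp} is just a rewriting of $(c+p)/c^2 = 1/(t(1-t))$. For the $z^{-1/2}$ term we need
\[
\frac{c+2p}{c^{3/2}} = \frac{\sqrt{a}}{t}.
\]
I would verify this by setting $A = \sqrt{a}/2$ and $B = \sqrt{A^2 + 2t/(1-t)}$, so that \eqref{defcp} reads $\sqrt{c} = (1-t)(A+B)/2$ and $B^2 - A^2 = 2t/(1-t)$. Substituting $p = c^2/(t(1-t)) - c$ on the left and clearing denominators reduces the identity to $(1-t)(B^2-A^2) = 2t$, which is precisely the defining relation for $B$. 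The $z^{-1}$ coefficient $-1/(2z)$ matches at sight since $p w_2^2$ contributes nothing at this order.

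The case $k=3$ follows by exactly the same computation; the only change in \eqref{behaviorw} is the sign of the $z^{-1/2}$ coefficient of $w_3$, which flips the sign of the $z^{-1/2}$ term in $\zeta_3$ and produces the $+\sqrt{a}/(tz^{1/2})$ required by \eqref{zeta}. The whole argument is essentially a bookkeeping exercise; the only non-trivial step is the $z^{-1/2}$ identity, where the specific quadratic formula \eqref{defcp} for $c$ is needed. I do not anticipate any real obstacle.
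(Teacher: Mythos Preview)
Your proposal is correct and takes exactly the same approach as the paper, which simply states that the lemma ``follows from direct computations using \eqref{behaviorw} and \eqref{mod-zeta}'' without giving details. You have filled in those computations accurately, including the verification of the $z^{-1/2}$ identity via the substitution $A=\sqrt{a}/2$, $B=\sqrt{A^2+2t/(1-t)}$.
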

\begin{proof}
This follows from direct computations using \eqref{behaviorw} and \eqref{mod-zeta}. 
\end{proof}

\begin{figure}[t]
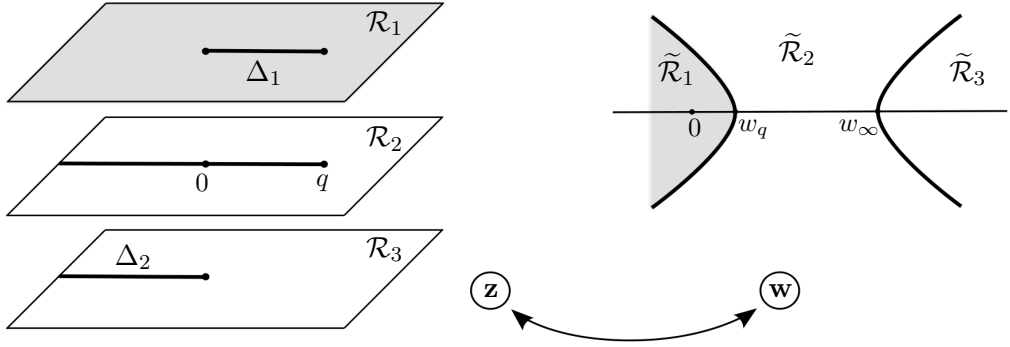

\centering 
\begin{overpic}[scale=0.9]%
{mapping}%
  \put(36,31){$\mathcal R_{1} $}
   \put(36,19.5){$\mathcal R_{2} $}
    \put(36,8.5){$\mathcal R_{3} $}
    \put(19,15){\small $0 $}
     \put(31,15.5){\small $q $}
      \put(11,7.7){$\Delta_{2} $} 
       \put(24,26){$\Delta_{1} $} 
       \put(47.7,4.5){$\mathbf z $}    
        \put(76,4.5){$\mathbf w $}  
        \put(68,20.5){\small $0 $}    
        \put(73,21){\small $w_{q} $}
        \put(83,21){\small $w_{\infty} $}
        \put(65,26){$\widetilde{\mathcal R}_{1} $}
   \put(77,28){$\widetilde{\mathcal R}_{2} $}
    \put(94,26){$\widetilde{\mathcal R}_{3} $}
       \end{overpic}
\caption{Bijection between the Riemann surface $\mathcal R$ and the extended $w$-plane.}
\label{fig:mapping}
\end{figure}

In what follows we need the behavior of the $\zeta$-functions \eqref{mod-zeta} 
near the origin. The following lemmas are analogous to Lemmas 3.2 and 3.3 in \cite{BK4}.
We put $\omega=e^{2\pi i/3}$, as before.
\begin{lemma} \label{wat0}
There exist analytic functions $f_{1}$ and $g_{1}$ defined in a
neighborhood $U_{1}$ of the origin such that with  $z\in U_1$ and $k=1,2,3$,
\begin{equation}\label{expwat0}
w_{k} (z)=
\begin{cases}
\omega^{2k}z^{-1/3}f_{1} (z)+\omega^{k}z^{1/3}g_{1}
(z)+\frac{2}{3c}\qquad \text{for }\Im z>0,\\
\omega^{k}z^{-1/3}f_{1} (z)+\omega^{2k}z^{1/3}g_{1}
(z)+\frac{2}{3c}\qquad \text{for }\Im z<0,
\end{cases} 
\end{equation}
In addition, we have $f_{1} (0)=c^{-2/3}$, and $f_{1} (z)$ and $g_{1}
(z)$ are real for real $z\in U_1$.
\end{lemma}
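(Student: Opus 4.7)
The plan is to pass to a local uniformizer on the Riemann surface $\mathcal{R}$ near the triple branch point above $z=0$ and then to split the resulting Laurent expansion of $w$ according to the $\Z/3\Z$ action $\zeta\mapsto\omega\zeta$. Setting $u=1/w$, the defining equation \eqref{eqmodif} becomes the cubic
\[
u^{3} - z u^{2} + 2 c z u - c^{2} z = 0.
\]
First I would substitute $u=\zeta v$ with $\zeta := z^{1/3}$, reducing this to
\[
v^{3} - \zeta^{2} v^{2} + 2 c \zeta v - c^{2} = 0.
\]
At $\zeta=0$ this polynomial in $v$ has the simple roots $v=\omega^{j} c^{2/3}$, and its $v$-derivative there equals $3c^{4/3}\neq 0$, so the implicit function theorem produces a convergent analytic expansion $v(\zeta)=c^{2/3}+v_{1}\zeta+v_{2}\zeta^{2}+\cdots$; its Taylor coefficients are real because $v$ is the real branch for real $\zeta$ small. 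Consequently $w=1/u=\zeta^{-1}/v(\zeta)$ is meromorphic at $\zeta=0$ with a simple pole of residue $c^{-2/3}$ and admits a Laurent expansion $w(\zeta)=\sum_{n\ge -1}w_{n}\zeta^{n}$ with real $w_{n}$.

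Next I would collect this series by the residue of the exponent of $\zeta$ modulo $3$,
\[
w(\zeta)=\zeta^{-1} A(\zeta^{3}) + B(\zeta^{3}) + \zeta\, C(\zeta^{3}),
\]
where $A,B,C$ are analytic at $0$ with $A(0)=c^{-2/3}$ and real Taylor coefficients. For $z$ in the upper half-plane the three solutions of \eqref{eqmodif} correspond to evaluating at $\zeta,\omega\zeta,\omega^{2}\zeta$; since $\omega^{-j}=\omega^{2j}$, this produces expansions of exactly the shape \eqref{expwat0}, with $f_{1}:=A$ and $g_{1}:=C$. The corresponding formula for $\Im z<0$ arises from the branch convention used to label the sheets across the cut $\Delta_{2}=\R_{-}$, which swaps the two leading coefficients; this is essentially a bookkeeping step once the asymptotic normalization \eqref{behaviorw} has been traced through the Puiseux inversion.

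The only step that is not routine is the identification of the middle term $B(\zeta^{3})$ as the \emph{constant} $2/(3c)$, rather than merely an analytic function with value $2/(3c)$ at the origin. This will follow from Vieta's formulas for the cubic in $u$: its roots satisfy
\[
u_{1}u_{2}+u_{1}u_{3}+u_{2}u_{3}=2cz, \qquad u_{1}u_{2}u_{3}=c^{2}z,
\]
so that
\[
w_{1}(z)+w_{2}(z)+w_{3}(z) = \frac{u_{1}u_{2}+u_{1}u_{3}+u_{2}u_{3}}{u_{1}u_{2}u_{3}} = \frac{2}{c}
\]
for every $z$ in a punctured neighborhood of the origin. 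Summing the three branch expansions and using $\sum_{k=1}^{3}\omega^{k}=\sum_{k=1}^{3}\omega^{2k}=0$ gives $3B(z)=2/c$, hence $B\equiv 2/(3c)$. This Vieta identity is really the heart of the proof; the rest is the standard Puiseux analysis at a triple branch point, and reality of $f_{1}$ and $g_{1}$ on the real axis is immediate from the reality of the coefficients $w_{n}$.
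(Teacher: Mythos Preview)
Your argument is correct and follows essentially the same route as the paper: both perform the Puiseux analysis via the substitution $\zeta=z^{1/3}$, $u=1/w=\zeta v$, landing on the same cubic $v^{3}-\zeta^{2}v^{2}+2c\zeta v-c^{2}=0$ (the paper writes it as $(xy-c)^{2}-y^{3}=0$ with $x=\zeta$, $y=v$), solve it by the implicit function theorem, and then split $1/v(\zeta)$ into its three $\Z/3\Z$ components. The only substantive difference is how the constant part is identified: the paper plugs the ansatz $xw=f_{1}(x^{3})+x^{2}g_{1}(x^{3})+xh_{1}(x^{3})$ back into the defining equation and extracts the algebraic relations \eqref{relatf1g1} together with $h_{1}\equiv 2/(3c)$, whereas you use the Vieta identity $w_{1}+w_{2}+w_{3}=2/c$ to get $B\equiv 2/(3c)$ directly. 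Your shortcut is cleaner for the constant term but forfeits the side relations $f_{1}g_{1}=1/(9c^{2})$ and the cubic identity between $f_{1}^{3}$ and $g_{1}^{3}$, which the paper records and later uses in the proof of Lemma~\ref{zetaat0}. The paper also pins down the sheet labeling by first recognizing the real-for-$z>0$ branch as $w_{3}$ and then continuing analytically, while you defer this to ``bookkeeping''; that step is indeed routine, but it is where the specific pairing $w_{k}\leftrightarrow \omega^{k}\zeta$ (and the swap for $\Im z<0$) must actually be checked against \eqref{behaviorw}.
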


\begin{proof}
We put $z=x^{3}$ and $w= (xy)^{-1}$ in \eqref{eqmodif} and obtain
\begin{equation}\label{eqxy}
(xy-c)^{2}-y^{3}=0.
\end{equation}
It has a solution $y=y (x)$ that is analytic in a neighborhood $U_{1}$
of $0$ and $y (0)=c^{2/3}>0$, $y' (0)=-\frac{2}{3}c^{1/3}<0$. Then, we can write
\begin{equation}\label{eqxw}
xw (x)=1/y (x)=f_{1} (x^{3})+x^{2}g_{1} (x^{3})+xh_{1} (x^{3}),
\end{equation}
with
$f_{1}$, $g_{1}$ and $h_{1}$ analytic in $U_{1}$ and $f_{1}
(0)=c^{-2/3}>0$.
Plugging this into \eqref{eqmodif}, we find after some calculations that
\begin{equation}\label{relatf1g1}
f_{1} (z)g_{1} (z)=\frac{1}{9c^{2}},\quad c^{2}f_{1}^{3}
(z)-1+\frac{2}{27c}z+ c^{2}g_{1}^{3} (z) z^{2}=0,
\end{equation}
and $h_{1} (z)=2/(3c)$. Hence, there is a solution $w=w (z)$ of
\eqref{eqmodif} with
\[
w (z)=z^{-1/3}f_{1} (z)+z^{1/3}g_{1} (z)+\frac{2}{3c},\qquad
\text{for }z\in U_{1}\setminus(-\infty,0],
\]
where we take the principal branches for the fractional powers. This
solution is real for $z$ real and positive, thus it coincides with
$w_{3} (z)$, which proves \eqref{expwat0} for 
$k=3$. Considering the two others solutions of \eqref{eqxy}, we get
the expressions \eqref{expwat0} for
$k=1,2$ by analytic continuation. Since $y (x)$ is real for real $x$,
\eqref{eqxw} implies that $f_{1} (z)$ and $g_{1} (z)$ are also real
when $z$ is real.
\end{proof}

The next lemma describes the behavior of the $\zeta$-functions at the origin.
\begin{lemma}\label{zetaat0}
There exist analytic functions $f_{2}$ and $g_{2}$ defined in a
neighborhood $U_{2}$ of the origin such that for $z\in U_2$ and
$k=1,2,3$,
\begin{equation}\label{expzetaat0}
\zeta_{k} (z)=
\begin{cases}
\omega^{2k}z^{-1/3}f_{2} (z)+\omega^{k}z^{-2/3}g_{2}
(z)+\frac{2}{3t (1-t)}\qquad \text{for }\Im z>0,\\
\omega^{k}z^{-1/3}f_{2} (z)+\omega^{2k}z^{-2/3}g_{2}
(z)+\frac{2}{3t (1-t)}\qquad \text{for }\Im z<0.
\end{cases} 
\end{equation}
In addition, we have 
\begin{equation}\label{f2g2at0}
f_{2} (0)=c^{-2/3}\left(1+\frac{4p}{3c} \right),\qquad g_{2} (0)=pc^{-4/3},
\end{equation}
and $f_{2} (z)$ and $g_{2}
(z)$ are real for real $z\in U_2$.
\end{lemma}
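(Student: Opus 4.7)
The natural plan is to simply substitute the expansion of $w_k$ from Lemma~\ref{wat0} into the definition $\zeta_k = w_k + p w_k^2$ and collect terms by fractional powers of $z$. I will carry out the computation for $\Im z > 0$; the case $\Im z < 0$ is identical up to interchanging $\omega^k \leftrightarrow \omega^{2k}$, which is exactly what the conclusion asks for.

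Write $w_k(z) = A + B + C$ with $A = \omega^{2k} z^{-1/3} f_1(z)$, $B = \omega^k z^{1/3} g_1(z)$, and $C = 2/(3c)$. Then squaring and using $\omega^3 = 1$ (so that $\omega^{4k} = \omega^k$ and $\omega^{3k} = 1$) gives
\begin{align*}
w_k^2 &= \omega^k z^{-2/3} f_1^2 + \omega^{2k} z^{2/3} g_1^2 + \tfrac{4}{9c^2} + 2 f_1 g_1 \\
&\quad + \tfrac{4}{3c}\omega^{2k} z^{-1/3} f_1 + \tfrac{4}{3c}\omega^k z^{1/3} g_1,
\end{align*}
and plugging into $\zeta_k = w_k + p w_k^2$ one can group all the $\omega^{2k}$ contributions under a single factor $z^{-1/3}$ (rewriting $z^{2/3} = z \cdot z^{-1/3}$) and all the $\omega^k$ contributions under a single factor $z^{-2/3}$ (rewriting $z^{1/3} = z \cdot z^{-2/3}$). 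This suggests defining the analytic functions
\begin{equation*}
f_2(z) = f_1(z)\Bigl(1 + \tfrac{4p}{3c}\Bigr) + p\,z\,g_1(z)^2, \qquad g_2(z) = p\,f_1(z)^2 + z\,g_1(z)\Bigl(1 + \tfrac{4p}{3c}\Bigr),
\end{equation*}
which are manifestly analytic in $U_1$ and real on $U_1 \cap \R$ because $f_1, g_1$ are. Evaluating at $z=0$ immediately gives $f_2(0) = c^{-2/3}(1+\tfrac{4p}{3c})$ and $g_2(0) = p f_1(0)^2 = p c^{-4/3}$, matching \eqref{f2g2at0}.

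The only nontrivial step is verifying that the $z$-independent terms collapse to $\tfrac{2}{3t(1-t)}$. Those terms are $C + p(C^2 + 2 f_1 g_1) = \tfrac{2}{3c} + \tfrac{4p}{9c^2} + 2p f_1(z) g_1(z)$. Here I would invoke the algebraic identity $f_1 g_1 \equiv 1/(9c^2)$ from \eqref{relatf1g1} to turn this into $\tfrac{2}{3c} + \tfrac{2p}{3c^2} = \tfrac{2(c+p)}{3c^2}$, and then use the definition $p = c^2/(t(1-t)) - c$ in \eqref{defcp}, which gives $c+p = c^2/(t(1-t))$ and hence the required value $\tfrac{2}{3t(1-t)}$. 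No step is a genuine obstacle; the only point requiring care is bookkeeping the powers of $\omega$ modulo $3$ and not losing any of the mixed $AB$, $AC$, $BC$ cross terms when squaring $w_k$. Neighborhood $U_2$ can simply be taken to be $U_1$.
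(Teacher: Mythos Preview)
Your proposal is correct and follows essentially the same approach as the paper: substitute the expansion \eqref{expwat0} for $w_k$ into $\zeta_k=w_k+pw_k^2$, use $f_1g_1=1/(9c^2)$ from \eqref{relatf1g1}, and read off $f_2,g_2$ exactly as in the paper's formulas \eqref{f12g12}. If anything, you are more explicit than the paper in verifying that the constant term collapses to $2/(3t(1-t))$ via $c+p=c^2/(t(1-t))$.
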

\begin{proof}
The proof follows from \eqref{mod-zeta} and the previous lemma. It
suffices to compute $w_{k}^{2} (z)$ by using \eqref{expwat0} and the
first identity in
\eqref{relatf1g1}. Then, \eqref{expzetaat0} follows from
\eqref{mod-zeta} if we set
\begin{equation}\label{f12g12}
f_{2} (z)=\left(1+\frac{4p}{3c} \right)f_{1} (z)+pzg_{1}^{2} (z),\qquad 
g_{2} (z)=\left(1+\frac{4p}{3c} \right)zg_{1} (z)+pf_{1}^{2} (z),
\end{equation}
and \eqref{f2g2at0} follows from the value of $f_{1}
(0)$ given in Lemma \ref{wat0}. The functions $f_{2} (z)$ and $g_{2}
(z)$ are real for real $z\in U_2$ since 
$f_{1} (z)$ and $g_{1} 
(z)$ are real for real $z\in U_1$.
\end{proof}

\subsection{The $\lambda$-functions} \label{section4}

We next define the $\lambda$-functions as anti-derivatives of the modified $\zeta$-functions
\eqref{mod-zeta}.

\begin{definition}
We define for $k=1,2,3$,
\begin{equation}\label{deflambda}
\lambda_{k} (z)=\int_{0_{+}}^{z}\zeta_{k} (s)ds,
\end{equation}
where the path of integration starts at $0$ on the upper half-plane (which is denoted by $0_{+}$)
and is contained in $\C \setminus (-\infty,q]$ for $k=1,2$, and
in $\C \setminus (-\infty,0]$ for $k=3$. 
\end{definition}

By construction the functions
$\lambda_{1}$ and $\lambda_{2}$ are analytic in $\C \setminus
(-\infty,q]$ while $\lambda_{3}$ is analytic in $\C \setminus
(-\infty,0]$. From Lemma \ref{behavior-modified} and
\eqref{deflambda}, it follows 
that, as $z\to\infty$,
\begin{align}
\lambda_{1} (z) & = \log{z}+ \ell_{1} + \mathcal{O} \left(\frac{1}{z} \right), \nonumber\\
\lambda_{2} (z) & = \frac{z}{t
(1-t)}-\frac{2\sqrt{a}z^{1/2}}{t}-\frac{1}{2}\log z +\ell_{2}+ \mathcal{O}
\left(\frac{1}{z^{1/2}} \right), \nonumber\\
\lambda_{3} (z) & = \frac{z}{t
(1-t)}+\frac{2\sqrt{a}z^{1/2}}{t}-\frac{1}{2}\log z+\ell_{3}+ \mathcal{O}
\left(\frac{1}{z^{1/2}} \right), \nonumber
\end{align}
where $\ell_{k}$, $k=1,2,3$, are certain constants of integration, and $\log z$ is the principal branch
of the logarithm which is real on $(0,+\infty)$. 
Using the structure of the Riemann surface $\mathcal R$ and the residue calculation based on the expansion of $\zeta_{1}$ at infinity,
see \eqref{zeta}, we conclude that
\begin{equation}\label{valueLambdaAt0}
\lambda_{1-} (0)=-2\pi i, \quad \lambda_{2-} (0)=2\pi i, \quad  \lambda_{3-} (0)=0.
\end{equation}
Moreover, the following jump relations hold true:
\begin{equation}
\begin{aligned}\label{jumplamb}
	\lambda_{1\pm} (x) & =\lambda_{2\mp} (x)-2\pi i, \quad \lambda_{3+} (x) =\lambda_{3-} (x),  
	& & x\in \Delta_{1}= (0,q), \\
 \lambda_{1+}(x) & = \lambda_{1-} (x) +2\pi i,  & & x\in \Delta_{2}=(-\infty,0),\\
	\lambda_{2+}(x) & =\lambda_{3-}(x),  \quad
\lambda_{2-}(x)=\lambda_{3+}(x)+2\pi i, & & x\in\Delta_{2}.
\end{aligned}
\end{equation}

The behavior of the $\lambda$-functions at the
origin follows from Lemma \ref{zetaat0} and \eqref{deflambda}.

\begin{lemma} \label{lambdaat0}
There exist analytic functions $f_{3}$ and $g_{3}$ defined in a
neighborhood $U_{3}$ of the origin such that for $z\in U_3$ and $k=1,2,3$,
\begin{equation}\label{explambdaat0}
	\lambda_{k} (z)=
	\begin{cases}
	\frac{3}{2}\omega^{2k}z^{2/3}f_{3} (z)+\omega^{k}z^{1/3}g_{3}(z)+\frac{2z}{3t (1-t)}\quad & \text{for }\Im z>0,\\
	\lambda_{k-} (0) +\frac{3}{2}\omega^{k}z^{2/3}f_{3}(z)+\omega^{2k}z^{1/3}g_{3} 
(z)+\frac{2z}{3t (1-t)}\quad & \text{for }\Im z<0,
\end{cases} 
\end{equation}
with $\lambda_{k-} (0)$ given by \eqref{valueLambdaAt0}. In addition, we have 
\begin{equation}\label{f3g3at0}
	f_{3} (0)= f_{2} (0)=c^{-2/3}\left(1+\frac{4p}{3c} \right),\qquad
	g_{3} (0)= 3g_{2} (0)=3pc^{-4/3},
\end{equation}
and both $f_{3} (z)$ and $g_{3}(z)$ are real for real $z\in U_3$.
\end{lemma}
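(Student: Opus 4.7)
The plan is to obtain \eqref{explambdaat0} by integrating the expansion of $\zeta_{k}$ from Lemma~\ref{zetaat0} term by term, separately for $\Im z>0$ and $\Im z<0$, and then to read off $f_{3}$ and $g_{3}$ from the resulting series.

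For the upper half plane case, I would fix $z\in U_{3}\cap\{\Im z>0\}$ with $U_{3}\subset U_{2}$ small enough, and choose the path from $0_{+}$ to $z$ to lie entirely in the upper half plane (so it avoids all branch cuts of the relevant $\zeta_{k}$ and one may use principal branches of the fractional powers throughout). Writing $f_{2}(s)=\sum a_{n}s^{n}$ and $g_{2}(s)=\sum b_{n}s^{n}$, termwise integration yields
\[
	\int_{0_{+}}^{z}s^{-1/3}f_{2}(s)\,ds=\sum_{n\ge 0}\frac{a_{n}z^{n+2/3}}{n+2/3}=\frac{3}{2}z^{2/3}f_{3}(z),
	\qquad
	\int_{0_{+}}^{z}s^{-2/3}g_{2}(s)\,ds=z^{1/3}g_{3}(z),
\]
with $f_{3}(z)=\sum\frac{a_{n}z^{n}}{\frac{3}{2}(n+2/3)}$ and $g_{3}(z)=\sum\frac{b_{n}z^{n}}{n+1/3}$ analytic in $U_{3}$ and satisfying $f_{3}(0)=f_{2}(0)$ and $g_{3}(0)=3g_{2}(0)$, which together with \eqref{f2g2at0} gives \eqref{f3g3at0}. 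Combining these with the trivial integration of the constant $2/(3t(1-t))$, and using Lemma~\ref{zetaat0} for $\Im s>0$, produces the first line of \eqref{explambdaat0}. Since $f_{2}$ and $g_{2}$ are real on real arguments, so are $f_{3}$ and $g_{3}$.

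For the lower half plane case I would use
\[
	\lambda_{k}(z)=\lambda_{k-}(0)+\int_{0_{-}}^{z}\zeta_{k}(s)\,ds,
\]
where the second path lies in $U_{3}\cap\{\Im z<0\}$ (which is allowed for every $k$: for $k=3$ because the negative real axis is the only cut of $\zeta_{3}$, and for $k=1,2$ because the cut lies in $(-\infty,q]$ and may be avoided by a small detour through the right half plane, keeping the local expansion of Lemma~\ref{zetaat0} valid). Exactly the same termwise integration, now applied to the second line of \eqref{expzetaat0}, produces
\[
	\tfrac{3}{2}\omega^{k}z^{2/3}f_{3}(z)+\omega^{2k}z^{1/3}g_{3}(z)+\tfrac{2z}{3t(1-t)},
\]
with the \emph{same} analytic functions $f_{3},g_{3}$ (they depend only on $f_{2}$ and $g_{2}$, not on $\omega^{k}$), to which one adds the prescribed constant $\lambda_{k-}(0)$ from \eqref{valueLambdaAt0}.

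The only delicate points, which I expect to be the main obstacle to carry out cleanly, are: (i) verifying that the two expressions for $f_{3}$ and $g_{3}$ obtained from the upper and lower half plane integrations are \emph{the same} analytic functions on $U_{3}$—this follows because their coefficients are determined purely by the Taylor coefficients of $f_{2}$ and $g_{2}$; and (ii) being careful that the principal branches of $z^{1/3}$ and $z^{2/3}$ appearing in the upper and lower formulas in \eqref{explambdaat0} are consistent with the analytic continuation implicit in the path-dependent definition \eqref{deflambda}, the extra branch factors being absorbed precisely into the cube-root of unity prefactors $\omega^{k}$ and $\omega^{2k}$ which swap roles between the two half planes, exactly as they do in Lemma~\ref{zetaat0}.
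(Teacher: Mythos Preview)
Your proposal is correct and follows essentially the same approach as the paper: integrate the expansion \eqref{expzetaat0} of $\zeta_{k}$ termwise, using $\lambda_{k+}(0)=0$ for the upper half plane and the decomposition $\lambda_{k}(z)=\lambda_{k-}(0)+\int_{0_{-}}^{z}\zeta_{k}(s)\,ds$ for the lower half plane. The paper's proof is a single sentence to this effect; you have simply unpacked the details, including the explicit identification of $f_{3}$ and $g_{3}$ via the Taylor coefficients of $f_{2}$ and $g_{2}$.

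One small remark on exposition: your parenthetical about the ``small detour through the right half plane'' is slightly confused. For the integral $\int_{0_{-}}^{z}\zeta_{k}(s)\,ds$ with $z\in U_{3}\cap\{\Im z<0\}$, a straight path in the lower half plane already avoids every cut (all the cuts lie on the real axis), so no detour is needed there. The detour around $q$ is implicitly absorbed into the already-computed constant $\lambda_{k-}(0)$ from \eqref{valueLambdaAt0}, and the local expansion of Lemma~\ref{zetaat0} is only invoked on the short second leg in the lower half plane. This does not affect the correctness of your argument.
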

\begin{proof}
Integrating \eqref{expzetaat0} and taking into account that $\lambda_{k+}(0)=0$, we 
get \eqref{explambdaat0} and \eqref{f3g3at0}. The fact that $f_{3} (z)$ and $g_{3}
(z)$ are real for real $z\in U_3$ also follows from Lemma \ref{zetaat0}.
\end{proof}

The functions $f_3$ and $g_3$ depend on $t$. We write
$ f_3(z; t)$ and $g_3(z; t)$ if we want to emphasize the dependence on $t$.

\begin{figure}[t]
\hspace{1.2cm} {\includegraphics[scale=0.8]{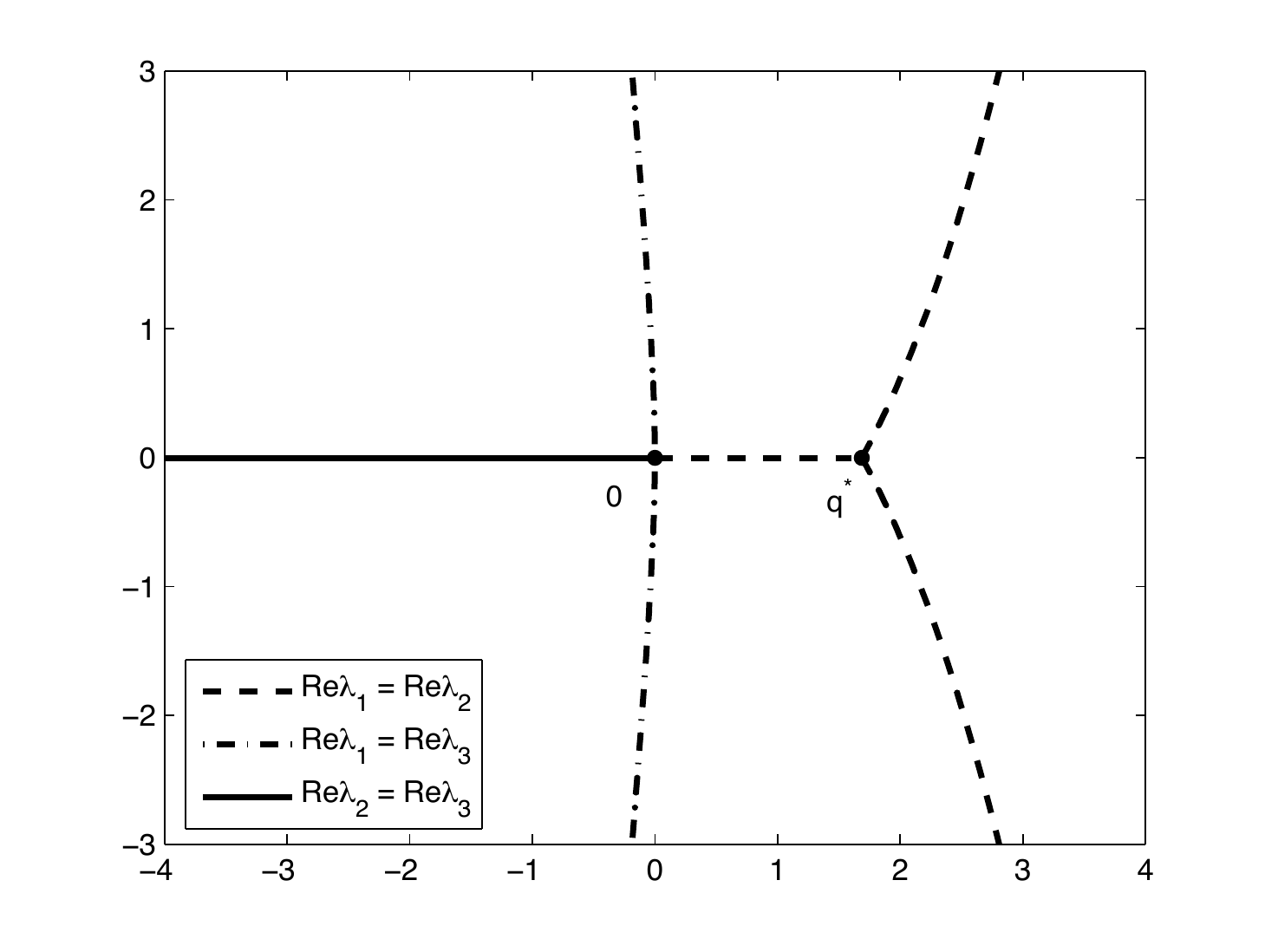}}
\caption{Curves $\Re \lambda_{j}=\Re \lambda_{k}$ at the critical time
(here $a=1$ and $t=t^{*}=0.5$).}
\label{lambda-at-crit}
\end{figure}

In order to control the jumps in the different RH problems that we are going to
consider in the sequel, we need to compare the real parts of the 
$\lambda$-functions. Figure~\ref{lambda-at-crit} shows, at the critical time
$t=t^{*}$, the curves in the complex plane where the 
real parts of the $\lambda$-functions are equal. These curves are
critical trajectories of the quadratic differentials 
$(\zeta_{j} (z)-\zeta_{k} (z))^{2}dz^{2}$. The curve $\Re\lambda_{2} (z)=\Re
\lambda_{3} (z)$ consists of the negative real axis. The curve
$\Re\lambda_{1} (z)=\Re \lambda_{3} (z)$ consists of two trajectories emanating from the
origin, symmetric with respect to the real axis, and going to
infinity. Finally, the curve $\Re\lambda_{1} (z)=\Re
\lambda_{2} (z)$ consists of $\Delta_{1}$ along with two symmetric
trajectories emanating from the branch point $q^{*}$.

At a non-critical
time $t\neq t^{*}$, the configuration of the curves remains the same,
except in a small neighborhood of the origin. Figures~\ref{lambda-bef-crit}
and \ref{lambda-after-crit} show the new configurations in such a
neighborhood. When $t<t^{*}$, the function $\zeta_{1} 
(z)-\zeta_{2}(z)$ has an additional zero on $\Delta_{1}$, which causes
the appearance of a new loop around the origin where $\Re \lambda_{1}
(z)=\Re \lambda_{2} (z)$. Also, the curve $\Re\lambda_{1} (z)=\Re
\lambda_{3} (z)$ is shifted to the left and becomes a continuation of
the loop as it intersects the negative real axis. Similarly, when
$t>t^{*}$, the function $\zeta_{2} 
(z)-\zeta_{3}(z)$ has an additional zero on $\Delta_{2}$, which causes
the appearance of a new loop around the origin where $\Re \lambda_{2}
(z)=\Re \lambda_{3} (z)$. Now, the curve $\Re\lambda_{1} (z)=\Re
\lambda_{3} (z)$ is shifted to the right and becomes a continuation of
the loop as it intersects the positive real axis. In both cases, when
$t$ tends to $t^{*}$, the loop shrinks to the origin.
\begin{figure}[t]
\hspace{1.2cm} {\includegraphics[scale=0.8]{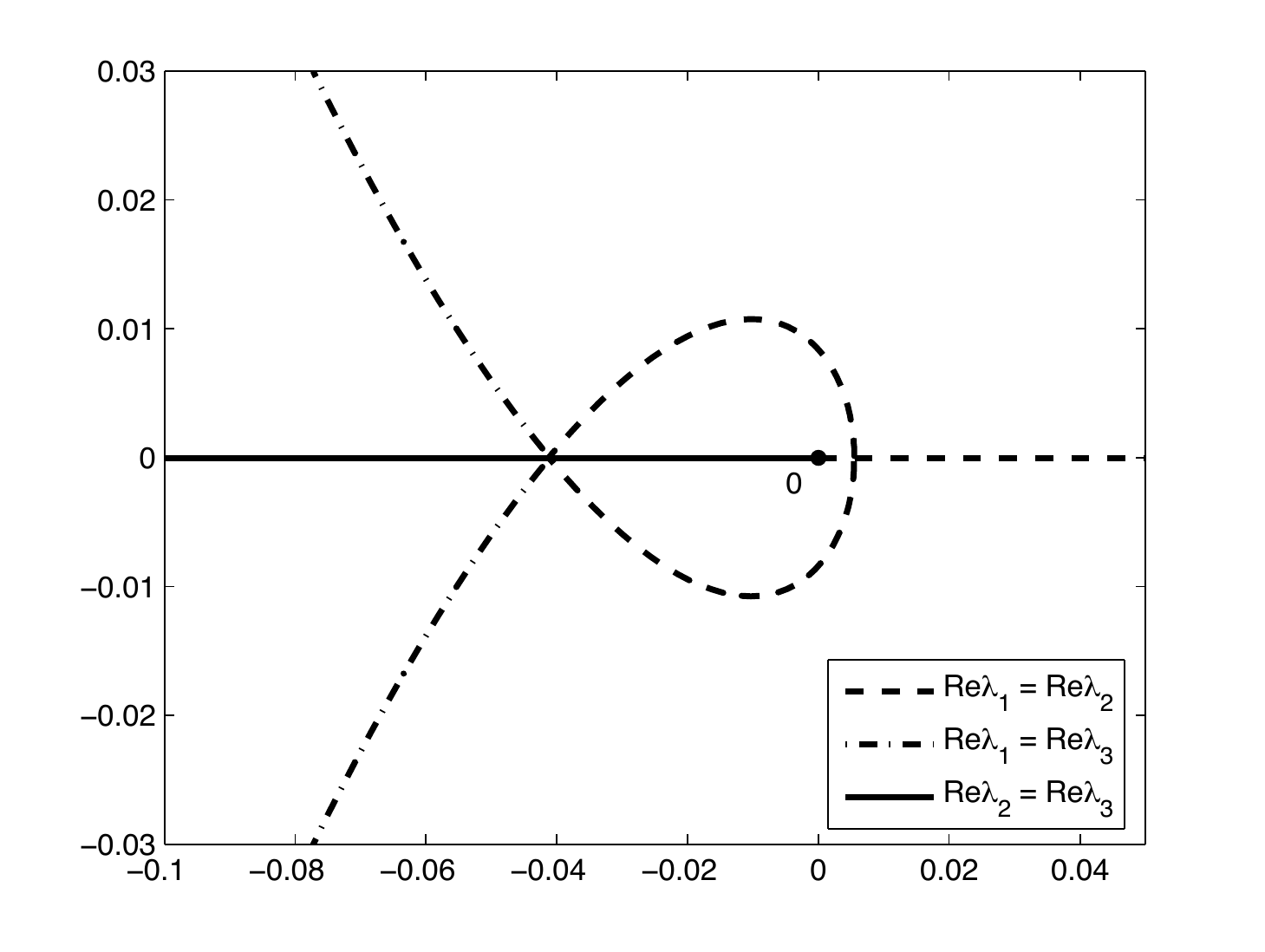}}
\caption{Curves $\Re \lambda_{j}=\Re \lambda_{k}$ near the origin, before the critical time (here $a=1$, $t=0.3<t^{*}=0.5$).}
\label{lambda-bef-crit}
\end{figure}
\begin{figure}[t]
\hspace{1.2cm} {\includegraphics[scale=0.8]{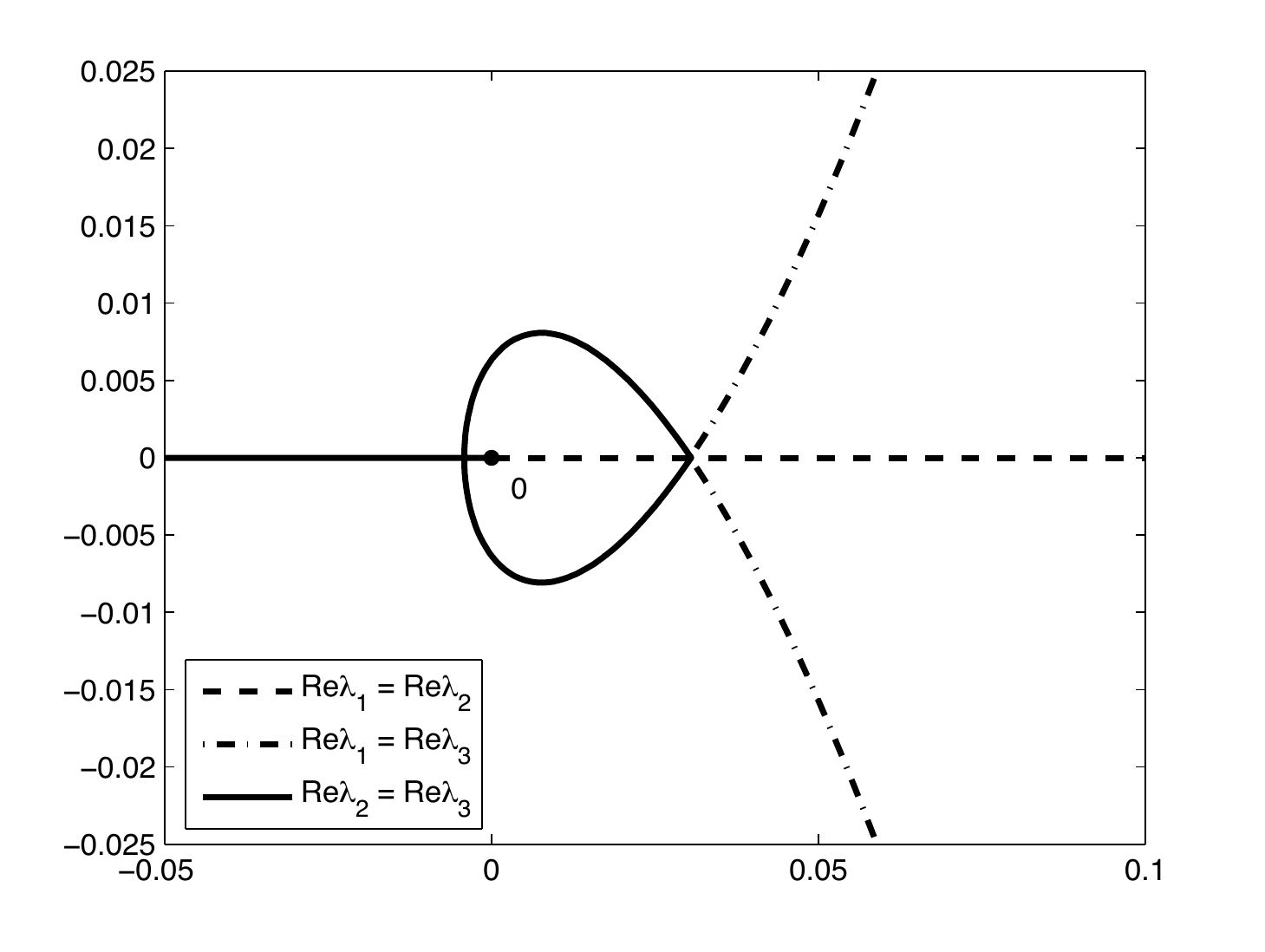}}
\caption{Curves $\Re \lambda_{j}=\Re \lambda_{k}$ near the origin, after the critical time (here $a=1$,  
$t=0.7>t^{*}=0.5$).}
\label{lambda-after-crit}
\end{figure}

Concerning the relative ordering of the real parts in a neighborhood
of the real axis, 
the following lemma holds true.
\begin{lemma}\label{lambda-sign}
\begin{enumerate}
\item[\rm (a)] For $z\in(q,+\infty)$, we have $\Re \lambda_{1} (z) <\Re \lambda_{2} (z)$.
\item[\rm (b)] The open interval $(0,q)$ has a neighborhood $U_{1}$ in the complex plane 
such that for $z\in U_{1}\setminus (0,q)$ and outside of the additional loop around $0$
when $t\neq t^{*}$, we have
$\Re \lambda_{2} (z) < \Re \lambda_{1} (z)$.
\item[\rm (c)] The open interval $(-\infty,0)$ has a neighborhood
$U_{2}$ in the complex plane such that for $z\in
U_{2}\setminus (-\infty,0)$ and outside of the additional loop around $0$
when $t\neq t^{*}$, we have
$\Re \lambda_{2} (z) < \Re \lambda_{3} (z)$.
The neighborhood $U_2$ is unbounded and contains a full neighborhood
of infinity.
\end{enumerate}
\end{lemma}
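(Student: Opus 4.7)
The strategy combines three ingredients. First, the jump relations \eqref{jumplamb} together with Schwarz reflection for $w_1, w_2, w_3$ force $\Re\lambda_{1+}=\Re\lambda_{2+}$ on $\overline{\Delta_1}$ and $\Re\lambda_{2+}=\Re\lambda_{3+}$ on $\overline{\Delta_2}$, so in each case the conclusion is about the sign of $\Re(\lambda_j-\lambda_k)$ in a neighborhood of the cut. Second, $\Re(\lambda_j-\lambda_k)$ is harmonic off the cuts, so its zero set is locally a union of real-analytic arcs---the critical trajectories of the quadratic differential $(\zeta_j-\zeta_k)^2\,dz^2$ drawn in Figures~\ref{lambda-at-crit}--\ref{lambda-after-crit}. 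Third, one fixes the sign inside each component of the complement of the zero set by a direct computation at a single point.

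For part (a), the functions $\lambda_1$ and $\lambda_2$ are real-analytic on $(q,+\infty)$ and agree at $z=q$, so it suffices to show $\zeta_2(x)>\zeta_1(x)$ on $(q,+\infty)$. The asymptotics \eqref{zeta} give $\zeta_2(x)-\zeta_1(x)\to 1/(t(1-t))>0$ as $x\to\infty$, and from \eqref{mod-zeta} and the sheet structure of $\mathcal R$ the only real points where $\zeta_2=\zeta_1$ are the branch points $z=q$ and $z=\infty$; hence the positive sign is preserved throughout $(q,+\infty)$, and integration from $q$ yields the claim.

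For parts (b) and (c) I would use Cauchy-Riemann in the normal direction: for $x\in\Delta_1$ and small $\epsilon>0$,
\[
\Re(\lambda_1-\lambda_2)(x+i\epsilon)=-\epsilon\,\Im\bigl(\zeta_{1+}(x)-\zeta_{2+}(x)\bigr)+O(\epsilon^2),
\]
and analogously for $\Re(\lambda_3-\lambda_2)$ on $\Delta_2$. On the cut exchanged between sheets $1$ and $2$ one has $\zeta_{2+}=\zeta_{1-}=\overline{\zeta_{1+}}$ by Schwarz reflection, so $\Im(\zeta_{1+}-\zeta_{2+})=2\,\Im\zeta_{1+}$, which has a definite sign governed by the limiting density convention dictated by $\zeta_1(z)=1/z+O(z^{-2})$ at infinity; the same analysis on $\Delta_2$ gives $\Im(\zeta_{3+}-\zeta_{2+})=2\,\Im\zeta_{3+}$ of definite sign. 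This establishes the desired inequalities on a thin strip along each cut; the extension to the full neighborhoods $U_1$ and $U_2$ is automatic because the sign of a harmonic function can only switch across its zero set, and that zero set consists only of the cut itself together with the trajectories in Figures~\ref{lambda-at-crit}--\ref{lambda-after-crit}, all of which stay away from the interior of the cut.

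To see that $U_2$ is unbounded, I would invoke
\[
\lambda_3(z)-\lambda_2(z)=\frac{4\sqrt{a}}{t}\,z^{1/2}+O(z^{-1/2}),\qquad z\to\infty,
\]
obtained from \eqref{behaviorw} and \eqref{mod-zeta}; its real part is strictly positive throughout $\C\setminus(-\infty,0]$, so the component of $\{\Re\lambda_2<\Re\lambda_3\}$ that contains $(-\infty,0)$ opens up at infinity into a full punctured neighborhood of $\infty$. For $t\ne t^*$, continuous dependence of $\zeta_k$ and $\lambda_k$ on $t$ deforms the trajectory structure continuously and introduces only the small loop around the origin seen in Figures~\ref{lambda-bef-crit}--\ref{lambda-after-crit}, outside which the previous sign analysis persists. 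The main obstacle will be the careful bookkeeping for $U_2$: one has to rule out any narrow channel in which $\Re(\lambda_3-\lambda_2)$ might vanish between the thin strip around $(-\infty,0)$ and the neighborhood of infinity, which is done by appealing to the global trajectory picture and the simple connectedness of the relevant component of $\{\Re\lambda_2<\Re\lambda_3\}$.
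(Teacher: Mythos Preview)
Your approach is the standard one and is essentially what the paper defers to in its one-line proof (which simply cites \cite[Lemma~4.3]{KMW} and \cite[Lemma~4.2]{BK4}): use the jump relations to see that $\Re(\lambda_j-\lambda_k)$ vanishes on the relevant cut, determine the sign in a thin strip via Cauchy--Riemann, extend by harmonicity and the known trajectory structure, and handle the neighborhood of infinity by the explicit large-$z$ asymptotics of $\lambda_3-\lambda_2$. So the plan is sound.

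One point deserves more care. You assert that $\Im(\zeta_{1+}-\zeta_{2+})=2\,\Im\zeta_{1+}$ has a \emph{definite} sign along $(0,q)$. At $t=t^{*}$ this is true, but for $t\neq t^{*}$ it is not: writing $\zeta_{1+}-\zeta_{2+}=(w_{1+}-w_{2+})\bigl(1+p(w_{1+}+w_{2+})\bigr)$ and using $w_{2+}=\overline{w_{1+}}$ on $\Delta_1$, the first factor is purely imaginary of fixed sign, but the second factor $1+2p\,\Re w_{1+}$ is real and, when $p\neq 0$, changes sign at a single point $x_0\in(0,q)$ (or on $\Delta_2$ in the analogous computation for part~(c)). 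This sign change is precisely what generates the ``additional loop'' in Figures~\ref{lambda-bef-crit}--\ref{lambda-after-crit} that the statement explicitly excludes. Your final paragraph covers this by invoking continuous dependence on $t$, but as written the earlier claim of a definite sign is literally false for $t\neq t^{*}$. The cleanest fix is either to run the Cauchy--Riemann argument first at $t=t^{*}$ and then perturb (as you indicate at the end), or to acknowledge the interior zero of $\Im(\zeta_{1+}-\zeta_{2+})$ directly and observe that it accounts exactly for the excluded loop, so that your sign determination holds on $(x_0,q)$ (respectively $(-\infty,x_0)$) and the loop bounds the exceptional region.
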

\begin{proof}
This is similar to the proof of \cite[Lemma 4.3]{KMW}.
When $t\neq t^{*}$, only the ordering of the
real parts inside
the additional loop is modified.
See also \cite[Lemma 4.2]{BK4}. 
\end{proof}

In the double scaling limit \eqref{doublescaling} that we are going to consider, 
we have that $t-t^{*}$ is of order $n^{-1/2}$ as $n\to\infty$. Then, the 
special ordering of the real parts of the $\lambda$-functions inside the loop
will not cause a problem because the (shrinking) disk around the
origin where we  are going to 
construct the local parametrix will be big enough to
contain the loop for $n$ large, see the proof of Lemma
\ref{lem:JumpsS} below.

\subsection{Second transformation of the RH problem}

The goal of the second transformation $X \mapsto U$ is to normalize the RH problem at
infinity using the functions $\lambda _j$ from Section \ref{section4}.
\begin{definition}
Given $X$ as in \eqref{Xdef} we define
\begin{equation}\label{Udef}
	U(z)= C_2   X(z)
\begin{pmatrix}
e^{-n \lambda_{1} (z)} & 0 & 0\\
0 & e^{-n (\lambda_{2} (z)- \frac{z}{t (1-t)})} & 0\\
0 & 0 & e^{-n \left(\lambda_{3} (z)- \frac{z}{t (1-t)}\right)}
\end{pmatrix},
\end{equation}
where $C_{2}$ is some explicit constant matrix, see \cite[Equation (4.19)]{KMW}.
\end{definition}

As a consequence of the assertion (c) in Lemma \ref{lambda-sign} 
we may (and do) 
assume that the contours $\Delta_2^\pm$,
defined above (and depicted in Figure~\ref{fig:first_transformation})
lie in the neighborhood $U_2$ of $\Delta_2$
where $\Re (\lambda _2-\lambda _3)<0$ (except when it intersects the small
loop near $0$ when $t\neq t^{*}$, see Figures~\ref{lambda-bef-crit} and \ref{lambda-after-crit}).

Using the jump relations \eqref{jumplamb} and other properties of
the $\lambda$ functions we find that $U$ solves the following RH problem, see \cite[Section 4]{KMW}
for details.
\begin{rhproblem} \label{rhpforU}
The matrix-valued function $U(z)$ defined by \eqref{Udef}
is the unique solution of the following RH problem.
\begin{enumerate}
\item $U(z)$ is analytic in $\mathbb{C} \setminus \Sigma_U$ where $\Sigma_U = \Sigma_X = \R \cup \Delta_2^{\pm}$.
\item On $\Sigma_U$ we have the jump
\begin{equation} \label{Ujump}
	U_+(z) = U_-(z) J_U(z), \qquad z \in \Sigma_U
	\end{equation}
with jump matrices $J_U(z)$ given by
\begin{align}
	J_U(x) & = \label{Ujump1}
    \begin{pmatrix}
	1 & 0 & 0\\
	0 & 0 & - |x|^{-\alpha} \\
	0 & |x|^{\alpha}    & 0
    \end{pmatrix},  && x \in\Delta_{2}= (-\infty,0), \\
  J_U(x) & =  \label{Ujump3}
    \begin{pmatrix}
    e^{n (\lambda_{2}-\lambda_{1})_{+} (x)} & x^\alpha & 0 \\
        0 & e^{n (\lambda_{2}-\lambda_{1})_{-} (x)} & 0 \\
        0 & 0                    & 1
    \end{pmatrix}, &&  x\in\Delta_{1}= (0,q), \\
    \label{Ujump4}
	J_U(x) & = 
	\left( I + x^\alpha e^{n (\lambda_{1}-\lambda_{2}) (x)} E_{1 2}\right),
	&&  x\in (q,\infty), \\
	 \label{Ujump5}
	J_U(z) & = \left( I + e^{\pm \alpha \pi i} z^{-\alpha} e^{n(\lambda_2-\lambda_3)(z)} E_{2 3}\right),  
	&&  z \in \Delta_2^{\pm}.
\end{align}
\item As $z\to \infty$ we have
\begin{multline} \label{Uasymptotics}
U(z) =
   \left(I + \mathcal{O}\left(\frac{1}{z}\right) \right)
    \begin{pmatrix}
    1 & 0 & 0\\
    0 & z^{1/4} & 0 \\
    0 & 0 &  z^{-1/4}
    \end{pmatrix}
     \begin{pmatrix}
    1 & 0 & 0 \\
    0 & \frac{1}{\sqrt{2}} & \frac{1}{\sqrt{2}}i \\[2mm]
    0 & \frac{1}{\sqrt{2}}i & \frac{1}{\sqrt{2}}
    \end{pmatrix}
    \begin{pmatrix}
    1 & 0 & 0 \\
    0 & z^{\alpha/2} & 0 \\
    0 & 0 & z^{-\alpha/2}
    \end{pmatrix},
\end{multline}
\item $U(z)$ has the same behavior as $X(z)$ at
the origin, see \eqref{Yedge} and \eqref{Xedge2}.
\end{enumerate}
\end{rhproblem}

It follows from Lemma \ref{lambda-sign} that in the double scaling limit \eqref{doublescaling}
the jump matrices in \eqref{Ujump4} and \eqref{Ujump5}
tend to the  identity matrix $I$ as $n \to \infty$ at an exponential
rate, except when $z$ lies inside the small loop around the origin, 
see Figures~\ref{lambda-bef-crit} and \ref{lambda-after-crit}.
Moreover, $(\lambda_{2}-\lambda_{1})_{+} =-
(\lambda_{2}-\lambda_{1})_{-}-4\pi i$ is purely imaginary on
$\Delta_{1}$, so that the first two diagonal elements of the jump matrices in
\eqref{Ujump3} are highly oscillatory if $n$ is large.

\section{Third transformation of the RH problem} \label{sec6}

The third transformation $U\mapsto T$ consists in opening a lens around
$\Delta_{1}$, see Figure~\ref{deform3}. It transforms the oscillatory
entries in the jump matrix on $\Delta_{1}$ into exponentially small
off-diagonal terms. Following \cite[Section 5]{KMW}, we define $T$ as follows.

\begin{definition}
We define
\begin{equation} \label{T1def}
	T(z)=U(z) \left( I \mp z^{-\alpha}e^{n (\lambda_{2}-\lambda_{1}) (z)} E_{21} \right),
\end{equation}
for $z$ in the domain bounded by $\Delta_{1}^{\pm}$ and $\Delta_1$ 
(the shaded region in Figure~\ref{deform3}), and we define
\begin{equation} \label{T2def}
    T(z) =U(z) 
    \end{equation}
for $z$  outside of the lens around $\Delta_1$.  
\end{definition}

\begin{figure}[t]
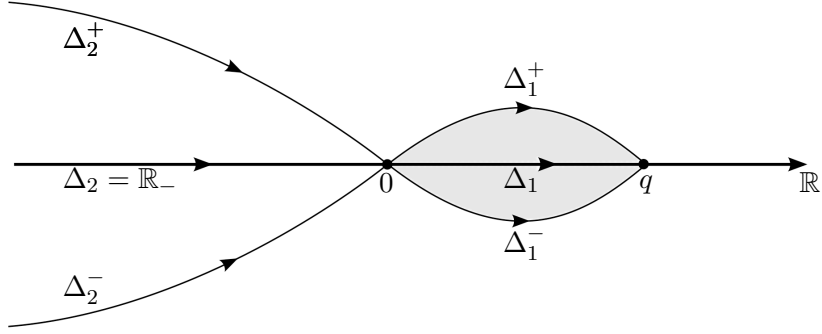

\centering 
\begin{overpic}[scale=0.7]%
{deformation1}%
  \put(7,35){$\Delta_{2}^+ $}
  \put(7,35){$\Delta_{2}^+ $}
  \put(7,4){$\Delta_{2}^- $}
  \put(62,30){$\Delta_{1}^+ $}
  \put(62,9.5){$\Delta_{1}^- $}
   \put(7,17.5){$\Delta_{2}=\R_{-} $}
  \put(62,17.5){$\Delta_{1}$}
   \put(46.5,17){$0$}
    \put(79,17.5){$q$}
     \put(99,17){$\R$}
  \end{overpic}
\caption{Opening of lens around $\Delta_{1}$. The contour $\Sigma_T = \mathbb R \cup \Delta_1^{\pm} \cup \Delta_2^{\pm}$ 
is the jump contour in the RH problem for $T$}
\label{deform3}
\end{figure}

Let $\Sigma_{T} = \mathbb R \cup \Delta_1^{\pm} \cup \Delta_2^{\pm}$ be the 
union of the contours depicted in Figure~\ref{deform3}. 
Then, straightforward calculations yield that the matrix-valued function $T$ is the solution
of the following RH problem.

\begin{rhproblem} \label{rhpforT}
The matrix-valued function $T(z)$ satisfies
\begin{enumerate}
\item $T$ is analytic in $\mathbb C \setminus \Sigma_{T}$.
\item On $\Sigma_T$ we have
\begin{equation} \label{Tjump}
	T_+ = T_- J_T 
	\end{equation}
	where
{\allowdisplaybreaks
\begin{align}
	J_T(x) & =  \begin{pmatrix}
    1 & 0                  & 0 \\
	0 & 0 & -|x|^{-\alpha}\\
	0 & |x|^{\alpha} & 0
    \end{pmatrix}, &&  x\in\Delta_{2}, \label{JT1}\\
	J_T(z) & = 
			I + e^{\pm \alpha \pi i} z^{-\alpha}e^{n (\lambda_{2}-\lambda_{3})(z)} E_{23},
		&& z\in\Delta_{2}^{\pm}, \label{JT2} \\
	J_T(x) & =    \begin{pmatrix}
	0 &  x^{\alpha}          & 0    \\
	-x^{-\alpha} & 0 & 0 \\
	0 &  0 & 1     \end{pmatrix}, 
		&&   x\in\Delta_1,\label{JT3} \\
	J_T(z) & = 
	  I + z^{-\alpha} e^{n (\lambda_{2}-\lambda_{1})(z)} E_{2 1}, 
    && z\in\Delta_{1}^{\pm}, \label{JT4} \\
	J_T(x) & = 
    I + x^\alpha e^{n (\lambda_{1}-\lambda_{2}) (x)} E_{1 2},
    && x\in (q,+\infty).\label{JT5}
\end{align}}
\item As $z \to \infty$, we have
\begin{equation} \label{Tinfty}
    T(z) =
   \left(I + \mathcal{O}\left(\frac{1}{z}\right) \right)
    \begin{pmatrix}
    1 & 0 & 0\\
    0 & z^{1/4} & 0 \\
    0 & 0 &  z^{-1/4}
    \end{pmatrix}
     \begin{pmatrix}
    1 & 0 & 0 \\
    0 & \frac{1}{\sqrt{2}} & \frac{1}{\sqrt{2}}i \\[2mm]
    0 & \frac{1}{\sqrt{2}}i & \frac{1}{\sqrt{2}}
    \end{pmatrix}
    \begin{pmatrix}
    1 & 0 & 0 \\
    0 & z^{\alpha/2} & 0 \\
    0 & 0 & z^{-\alpha/2}
    \end{pmatrix}.
\end{equation}
\item For $-1<\alpha<0$, $T(z)$ behaves near the origin like:
\begin{equation*} 
T(z)=\mathcal{O}\begin{pmatrix}
            1 & |z|^{\alpha} & 1 \\
            1 & |z|^{\alpha} & 1 \\
            1 & |z|^{\alpha} & 1
            \end{pmatrix}, \quad \text{as }z\to 0.
\end{equation*}
For $\alpha=0$, $T(z)$ behaves near the origin like:
\begin{equation*} 
T(z)=\mathcal{O}\begin{pmatrix}
             1 & \log |z| & 1 \\
             1 & \log |z| & 1 \\
             1 & \log |z| & 1
            \end{pmatrix}, \quad \text{as }z\to 0\text{ outside the
lenses around $\Delta_{2}$ and $\Delta_{1}$},
\end{equation*}
and
\begin{equation*} 
T(z)=\begin{cases}
 \mathcal{O}\begin{pmatrix}
            1 & \log |z| & \log |z| \\
            1 & \log |z| & \log |z| \\
            1 & \log |z| & \log |z|
            \end{pmatrix}, & \text{as }z\to 0 \text{ inside the lens
around $\Delta_{2}$},\\[5mm]
\mathcal{O}\begin{pmatrix}
            \log |z| & \log |z| & 1 \\
            \log |z| & \log |z| & 1 \\
            \log |z| & \log |z| & 1
            \end{pmatrix}, & \text{as }z\to 0 \text{ inside the lens
around $\Delta_{1}$}.
\end{cases}
\end{equation*}
For $\alpha > 0$, $T(z)$ behaves near the origin like:
\begin{equation*} 
T(z)=\mathcal{O}\begin{pmatrix}
             1 & 1 & 1 \\
             1 & 1 & 1 \\
             1 & 1 & 1
            \end{pmatrix}, \quad \text{as }z\to 0\text{ outside the
lenses around $\Delta_{2}$ and $\Delta_{1}$},
\end{equation*}
and\begin{equation*} 
T(z)=\left\{\begin{array}{ll}
\mathcal{O}\begin{pmatrix}
            1 & 1 & |z|^{-\alpha} \\
            1 & 1 & |z|^{-\alpha}\\
            1 & 1 & |z|^{-\alpha}
            \end{pmatrix}, & \text{as }z\to 0 \text{ inside the lens
around $\Delta_{2}$},\\
\mathcal{O}\begin{pmatrix}
             |z|^{-\alpha} & 1 & 1 \\
             |z|^{-\alpha} & 1 & 1 \\
             |z|^{-\alpha} & 1 & 1
            \end{pmatrix}, & \text{as }z\to 0 \text{ inside the lens
around $\Delta_{1}$}.
\end{array}
\right.
\end{equation*}
\item
$T(z)$ remains bounded as $z \to q$.
\end{enumerate}
\end{rhproblem}

\section{Global parametrix} \label{sec:N}

In the next step we ignore the jumps on $\Delta_1^{\pm}$ and $\Delta_2^{\pm}$ in the RH problem for $T$ and
we consider the following RH problem.
\begin{rhproblem} \label{rhpforN}
	Find $N_{\alpha}:\C\setminus (-\infty,q]\mapsto \C^{3\times 3}$ such that
\begin{enumerate}
\item $N_{\alpha}$ is analytic in $\C\setminus(-\infty, q]$.
\item $N_{\alpha}$ has continuous boundary values on $(-\infty,0)$
and $(0,q)$, satisfying the following jump relations:
\begin{align}
\label{eq:Njump1}
N_{\alpha +}(x) & =N_{\alpha -}(x)
    \begin{pmatrix}
    0            & x^\alpha & 0 \\
        -x^{-\alpha} & 0        & 0 \\
        0            & 0        & 1
    \end{pmatrix}, &&  x\in (0,q), \\
\label{eq:Njump2}
N_{\alpha +}(x) & =N_{\alpha -}(x)
\begin{pmatrix}
    1            & 0 & 0 \\
        0 & 0        & -|x|^{-\alpha} \\
        0            & |x|^{\alpha}        & 0
    \end{pmatrix}, && x\in (-\infty,0).
    \end{align}
\item As $z \to \infty$, 
\begin{equation} \label{Ninfty}
    N_{\alpha}(z) =
   \left(I + \mathcal{O}\left(\frac{1}{z}\right) \right)
    \begin{pmatrix}
    1 & 0 & 0\\
    0 & z^{1/4} & 0 \\
    0 & 0 &  z^{-1/4}
    \end{pmatrix}
     \begin{pmatrix}
    1 & 0 & 0 \\
    0 & \frac{1}{\sqrt{2}} & \frac{1}{\sqrt{2}}i \\[2mm]
    0 & \frac{1}{\sqrt{2}}i & \frac{1}{\sqrt{2}}
    \end{pmatrix}
    \begin{pmatrix}
    1 & 0 & 0 \\
    0 & z^{\alpha/2} & 0 \\
    0 & 0 & z^{-\alpha/2}
    \end{pmatrix}.
\end{equation}
\item As $z \to q$ we have
\begin{equation} \label{Nalphaatq} 
	N_{\alpha}(z)=\O \begin{pmatrix}
    |z-q|^{-1/4} & |z-q|^{-1/4} & 1 \\
    |z-q|^{-1/4} & |z-q|^{-1/4} & 1 \\
    |z-q|^{-1/4} & |z-q|^{-1/4} & 1 \end{pmatrix}.
\end{equation}
\item As $z \to 0$ we have 
\begin{equation} \label{Nalphaat0}
	z^{\alpha/3} N_{\alpha}(z) \begin{pmatrix} 1 & 0 & 0 \\ 0 & z^{-\alpha} & 0 \\ 0 & 0 & 1 \end{pmatrix}
		= M_{\alpha}^{\pm} z^{-1/3} + \O(1), \qquad \pm \Im z > 0,
		\end{equation}
		where $M_{\alpha}^{\pm}$ is a rank one matrix.
\end{enumerate}
\end{rhproblem}

This RH problem is solved as in \cite[Section 6]{KMW} in terms of
the branches $w_{k}$, $k=1,2,3$, of the algebraic function $w$, defined by \eqref{eqmodif},
\eqref{behaviorw}. 

\begin{proposition}
\begin{enumerate}
\item[\rm (a)] 
The solution of the RH problem \ref{rhpforN} for $\alpha = 0$ is given by
\begin{equation} \label{N0def}
 N_0(z)  =\begin{pmatrix}
    F_1(w_1(z)) & F_1(w_2(z))  & F_1(w_3(z)) \\
        F_2(w_1(z)) & F_2(w_2(z))  & F_2(w_3(z)) \\
        F_3(w_1(z)) & F_3(w_2(z))  & F_3(w_3(z))
    \end{pmatrix},
\end{equation}
where 
\begin{equation}  \label{eq:F1F2F3}
    F_1(w) = K_1 \frac{(w - w_{\infty})^{2}}{D(w)^{1/2}}, \quad
        F_2(w) = K_{2} \frac{w(w-w^*)}{D(w)^{1/2}}, \quad
        F_3(w) =  K_{3}  \frac{w(w - w_{\infty})}{D(w)^{1/2}}.
\end{equation}
with $w^* \neq w_{\infty}$, $K_1$, $K_2$, $K_3$ certain non-zero constants that depend on $a$ and $t$,
and
\begin{equation} \label{defD}
 	D(w) = (w-w_{q})(w-w_{\infty}), 
\end{equation}
The square root $D (w)^{1/2}$ in \eqref{eq:F1F2F3} is defined with a cut along
$w_{2-} (\Delta_1)\cup w_{2-}(\Delta_2)$, such that it is positive for real $w > w_{\infty}$.
\item[\rm (b)] The solution of the RH problem \ref{rhpforN} for general $\alpha$ is given by
\begin{equation} \label{Nalpha} 
    N_{\alpha}(z) = C_{\alpha} N_0(z) \begin{pmatrix}
    e^{\alpha G_1(z)} & 0 & 0 \\
    0 & e^{\alpha G_2(z)} & 0 \\
    0 & 0 & e^{\alpha G_3(z)}
    \end{pmatrix},  
    \end{equation}
    where $N_0(z)$ is given by \eqref{N0def}, $C_{\alpha}$ is a constant matrix
that depends on $a$, $t$ and $\alpha$, such that $\det C_\alpha=1$ (see \cite[Equation (6.14)]{KMW} for details),
and the functions $G_j(z)$ are given by
\begin{equation} \label{G1G2G3def}
    G_j(z) = r_j(w_j(z)), \quad  z \in \mathcal R_j, \quad j=1,2,3,
    \end{equation}
with 
\begin{equation} \label{r123def}
    \begin{aligned}
    r_1(w) & = \log(1-cw), & w \in \widetilde{\mathcal R}_1, \\
    r_2(w) & = - \log w - \log(1-cw), & w \in \widetilde{\mathcal R}_2, \\
    r_3(w) & = \log(1-cw) + i \pi, & w \in \widetilde{\mathcal R}_3.
    \end{aligned}
\end{equation}
The branches of the logarithms in \eqref{r123def} are chosen so that
$\log(1-cw)$ vanishes for $w=0$ and has a branch cut along $w_{2-}(\Delta_2)$
(cf.~Figure~\ref{fig:mapping}), and $\log w$ is the principal branch with a cut along $(-\infty,0]$. 
\end{enumerate}
\end{proposition}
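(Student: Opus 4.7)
My plan is to verify the four RH conditions of Problem \ref{rhpforN} directly by substitution, handling part (a) first and then obtaining part (b) via a diagonal dressing by $\mathrm{diag}(e^{\alpha G_j(z)})$.

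For part (a), I would take the ansatz \eqref{N0def} and check each condition in turn. Analyticity of $N_0$ in $\mathbb C\setminus(-\infty,q]$ is immediate from the analyticity domains of $w_1,w_2,w_3$ together with the choice of branch cut for $D(w)^{1/2}$ along $w_{2-}(\Delta_1)\cup w_{2-}(\Delta_2)$. The jump relations \eqref{eq:Njump1}, \eqref{eq:Njump2} (with $\alpha=0$) follow from the gluing of sheets of $\mathcal R$: crossing $\Delta_1=(0,q)$ interchanges $w_1$ and $w_2$, hence the first two columns of $N_0$; crossing $\Delta_2=(-\infty,0)$ interchanges $w_2$ and $w_3$, hence the second and third columns. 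The minus signs in those jump matrices come from $D(w)^{1/2}$ changing sign as $w$ crosses $w_{2-}(\Delta_1)$ or $w_{2-}(\Delta_2)$ in the $w$-plane. The asymptotic form \eqref{Ninfty} then fixes the constants $K_1,K_2,K_3$ and $w^*$ by matching leading terms using the expansions \eqref{behaviorw}. The $|z-q|^{-1/4}$ singularity in \eqref{Nalphaatq} is produced by $D(w)^{1/2}\sim(w-w_q)^{1/2}$ at the simple branch point, where $z-q\sim \text{const}\cdot (w-w_q)^2$.

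The local behavior \eqref{Nalphaat0} at the origin, which is the new feature relative to the subcritical analysis of \cite{KMW}, is the step where I expect the most care is required. Using Lemma \ref{wat0} I would substitute
\[
 w_k(z) = \omega^{\pm 2k} z^{-1/3} f_1(z) + \omega^{\pm k} z^{1/3} g_1(z) + \tfrac{2}{3c}
\]
into the explicit rational expressions for $F_1,F_2,F_3$. Since the factor $(w-w_\infty)$ with $w_\infty=1/c$ appears in $F_1$ and $F_3$, while $D(w)^{1/2}\sim(w-w_\infty)^{1/2}$, the leading singularities in $z$ organize themselves into three columns whose $z^{-1/3}$ coefficients are proportional, yielding a rank-one residue matrix $M_\alpha^{\pm}$ as required. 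The three columns differ only by $\omega^k$ phases in the leading order, which multiply a common vector, and this is precisely what gives the rank-one property.

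For part (b), the dressing $\mathrm{diag}(e^{\alpha G_j(z)})$ supplies the $x^{\alpha}$, $|x|^{\alpha}$ factors needed to pass from the $\alpha=0$ jumps to the general $\alpha$ jumps. Using \eqref{G1G2G3def}--\eqref{r123def} and the sheet permutations, one checks that $G_{1\pm}(x)-G_{2\mp}(x)=\pm\log x$ (up to constants) on $\Delta_1$, and $G_{2\pm}(x)-G_{3\mp}(x)=\pm\log|x|$ (up to constants) on $\Delta_2$; this exactly accounts for the $x^{\alpha}$ and $|x|^{\alpha}$ in \eqref{eq:Njump1}, \eqref{eq:Njump2}. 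The constant matrix $C_\alpha$ is then determined by the requirement that the non-vanishing logarithmic leading behavior of the $G_j$ at infinity be reabsorbed so that \eqref{Ninfty} holds; normalizing $\det C_\alpha=1$ ensures that $\det N_\alpha$ stays constant. The conditions at $q$ and at $0$ are unaffected because the $e^{\alpha G_j}$ remain bounded at $q$ and grow at most as a bounded power of $|z|$ at the origin, which is absorbed into the $\mathcal O(1)$ remainder in \eqref{Nalphaat0}. Uniqueness follows from standard RH reasoning: the quotient of two solutions is entire, bounded at infinity, hence constant, and the normalization forces that constant to be $I$. The main obstacle is the bookkeeping of signs and branches of $D(w)^{1/2}$ and of the logarithms defining $r_j$, and the verification that the leading $z^{-1/3}$ coefficient at the origin is genuinely of rank one.
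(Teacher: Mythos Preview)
Your proposal is correct and follows essentially the same approach as the paper. The paper is terser: it delegates the verification of items 1.--4.\ of RH problem~\ref{rhpforN} to \cite[Section~6]{KMW} and only spells out the new item~5.\ (the rank-one behavior \eqref{Nalphaat0} at the origin), deriving it from Lemma~\ref{wat0} exactly as you outline, and then handling general $\alpha$ via the growth orders $e^{G_1}=\O(z^{-1/3})$, $e^{G_2}=\O(z^{2/3})$, $e^{G_3}=\O(z^{-1/3})$ of the dressing factors; your sketch of the remaining items is precisely what one finds in \cite{KMW}.
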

\begin{proof}
The fact that $N_{\alpha}$ satisfies items 1., 2., 3., and 4.\ of the RH problem \ref{rhpforN}
is proved as in \cite[Section 6]{KMW}. 

From \eqref{N0def}, \eqref{eq:F1F2F3}, and the behavior of the functions $w_k$ at $0$ 
as given in Lemma \ref{wat0} we obtain 
\begin{equation}\label{asymptoticsN0at0}
	N_0(z) = \begin{cases}
	c^{-2/3} \begin{pmatrix} K_1 \\ K_2 \\ K_3 	\end{pmatrix} 
	\begin{pmatrix} \omega^2 & \omega & 1 \end{pmatrix} z^{-1/3} + \O(1), \quad z\to 0, \; \Im z>0, \\
	c^{-2/3} \begin{pmatrix} K_1 \\ K_2 \\ K_3 \end{pmatrix} 
	\begin{pmatrix} \omega &  -\omega^2 & 1 \end{pmatrix} z^{-1/3} + \O(1), \quad z\to 0, \; \Im z<0,
\end{cases}
\end{equation}
where, for $\Im z<0$, there is a minus sign in the second entry of the
rowvector $\begin{pmatrix} \omega &  -\omega^2 & 1\end{pmatrix}$ because of the choice for
the  branch of the square root $D (w)^{1/2}$ used in \eqref{eq:F1F2F3}.
This proves \eqref{Nalphaat0} for the case $\alpha = 0$.

For general $\alpha$, we use that by \eqref{G1G2G3def} and 
\eqref{r123def} we have that 
\[ e^{G_{1}(z)} =\O(z^{-1/3}),  \qquad e^{G_{2}(z)}=\O(z^{2/3}), \qquad
	e^{G_3(z)} = \O(z^{-1/3}) \]
as $z\to 0$. Then using \eqref{Nalpha} we find \eqref{Nalphaat0} also for this case. 
\end{proof}

It will be convenient in what follows to consider besides $N_{\alpha}$ also the
matrix valued function
\begin{equation} \label{tildeNalpha}
 \widetilde N_{\alpha}(z) =  z^{\alpha/3}N_{\alpha}(z)
\begin{pmatrix} 1 & 0 & 0 \\
    0 & z^{-\alpha} & 0 \\ 0 & 0 & 1  \end{pmatrix},
\end{equation}
which already appeared in \eqref{Nalphaat0}.

\begin{lemma}
\label{lemma:localN}
The solution $N_{\alpha}$ of the RH problem \ref{rhpforN} is unique and
satisfies 
\[ \det N_{\alpha} (z) = \det  \widetilde N_{\alpha}(z) = 1, \qquad z \in \C \setminus (-\infty,q]. \]
where $\widetilde N_{\alpha}$ is given by \eqref{tildeNalpha}.

As $z \to 0$ we have both
\begin{equation}\label{asymptotics0}  
	\widetilde N_{\alpha}(z)= \O \left( |z|^{-1/3} \right), \quad \text{and} \quad
	 \widetilde N_{\alpha}(z) ^{-1}= \O \left( |z|^{-1/3} \right).
\end{equation}
\end{lemma}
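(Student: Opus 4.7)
I would prove the three assertions in the order: first $\det N_\alpha \equiv 1$, then the bounds on $\widetilde N_\alpha^{\pm 1}$ at the origin, and finally uniqueness, since the uniqueness argument will rely on the asymptotic bounds.

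For the determinant identity, a direct check shows that both jump matrices \eqref{eq:Njump1} and \eqref{eq:Njump2} have determinant one, so $\det N_\alpha$ extends analytically across $(-\infty,0) \cup (0,q)$ and is therefore analytic on $\C \setminus \{0, q\}$. The factorization \eqref{tildeNalpha} yields $\det \widetilde N_\alpha = z^{\alpha} \cdot \det N_\alpha \cdot z^{-\alpha} = \det N_\alpha$, so the two scalar determinants agree. The asymptotic \eqref{Ninfty} forces $\det N_\alpha \to 1$ as $z \to \infty$. Near $z = q$, \eqref{Nalphaatq} gives $\det N_\alpha = \O(|z-q|^{-3/4})$, which is $o(|z-q|^{-1})$ and hence removable. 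Near $z = 0$, expanding $\det \widetilde N_\alpha$ from \eqref{Nalphaat0} in the form $\det(M_\alpha^\pm z^{-1/3} + \O(1))$, the coefficients of $z^{-1}$ and $z^{-2/3}$ both vanish, because $M_\alpha^\pm$ has rank one and so $\det M_\alpha^\pm = 0$ together with every $2 \times 2$ minor of $M_\alpha^\pm$; hence $\det N_\alpha = \O(|z|^{-1/3})$, again removable. Liouville's theorem then gives $\det N_\alpha \equiv 1$.

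For the asymptotic bounds, $\widetilde N_\alpha(z) = \O(|z|^{-1/3})$ is immediate from \eqref{Nalphaat0}. Since $\det \widetilde N_\alpha = 1$, the inverse is the adjugate, whose entries are $2 \times 2$ minors of $\widetilde N_\alpha$. Expanding such a minor from $\widetilde N_\alpha = M_\alpha^\pm z^{-1/3} + \O(1)$, its $z^{-2/3}$ coefficient is a $2 \times 2$ minor of the rank-one matrix $M_\alpha^\pm$ and therefore vanishes, leaving $\widetilde N_\alpha^{-1}(z) = \O(|z|^{-1/3})$.

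For uniqueness, let $N^{(1)}, N^{(2)}$ be two solutions. Using \eqref{tildeNalpha} one verifies the identity
\[
M(z) := N^{(1)}(z) \, N^{(2)}(z)^{-1} = \widetilde N^{(1)}(z) \, \widetilde N^{(2)}(z)^{-1}.
\]
The jump factors of the two solutions cancel on each side of $(-\infty,0) \cup (0,q)$, so $M$ is analytic on $\C \setminus \{0, q\}$. Near $z = q$, the bound \eqref{Nalphaatq} combined with $(N^{(2)})^{-1} = \mathrm{adj}(N^{(2)})$ gives $M = \O(|z-q|^{-3/4})$, while near $z = 0$ the bounds of the previous paragraph give $M = \O(|z|^{-2/3})$. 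Both orders are $o(|z|^{-1})$, so both isolated singularities are removable and $M$ extends to an entire function. Finally \eqref{Ninfty} forces $M(z) \to I$ as $z \to \infty$, so Liouville gives $M \equiv I$.

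The principal obstacle is the delicate cancellation at $z = 0$: without the rank-one structure in \eqref{Nalphaat0}, one would naively fear $\det N_\alpha$ to blow up like $|z|^{-1}$ and $\widetilde N_\alpha^{-1}$ like $|z|^{-2/3}$ there, which would destroy both the Liouville argument for the determinant and the removability step in the uniqueness proof. The rank-one condition is precisely what kills the leading $z^{-1}$ and $z^{-2/3}$ contributions in the relevant determinants and $2 \times 2$ minors, and makes all three assertions run in parallel.
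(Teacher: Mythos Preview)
Your proof is correct and follows essentially the same line as the paper's: analytic continuation of $\det N_\alpha$ across the jump contour via the unimodular jump matrices, removability of the isolated singularities at $q$ and $0$ (the latter crucially using the rank-one structure of $M_\alpha^\pm$), Liouville, then the adjugate argument for $\widetilde N_\alpha^{-1}$, and the standard ratio argument for uniqueness. The paper is terser and merely says the uniqueness and inverse bounds follow ``in a standard way'' from $\det N_\alpha = 1$ and \eqref{Nalphaat0}; you spell out these steps, but the content is the same. (A minor remark: your bound $\det N_\alpha = \O(|z-q|^{-3/4})$ is not sharp---since the third column in \eqref{Nalphaatq} is bounded, one actually gets $\O(|z-q|^{-1/2})$---but your weaker bound already suffices for removability.)
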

\begin{proof}
From the jump condition in the RH problem \ref{rhpforN} it follows that $\det N_\alpha (z)$ 
has an analytic continuation to $\C \setminus \{0, q\}$. The isolated
singularities are removable by \eqref{Nalphaatq} and \eqref{Nalphaat0},
where it is important that $M_{\alpha}^{\pm}$ in \eqref{Nalphaat0} is of rank one.
Thus $\det N_\alpha (z)$ is an entire function, and since it tends to $1$ as $z \to \infty$
by \eqref{Ninfty} we  conclude from Liouville's theorem that $\det N_\alpha (z)=1$.
The uniqueness of the solution of the RH problem \ref{rhpforN} now follows with
similar arguments in a standard way. 

From the definition \eqref{tildeNalpha} we then also find that 
$\det \widetilde N_{\alpha}(z) = 1$. The behavior \eqref{asymptotics0} for
both $\widetilde N_{\alpha}$ and its inverse finally follows from the
condition \eqref{Nalphaat0} in the RH problem, and the fact that $\widetilde N_{\alpha}$
has determinant one. 
\end{proof}

We need two more results that will be used later in Section \ref{final}.

\begin{lemma}\label{lemma:inverseN_0}
For $N_0$ defined in \eqref{Nalpha}, we have
\begin{equation} \label{N0transpose}
N_0^{-1} = N_0^T  \begin{pmatrix}
1 & 0 & 0 \\
0 & 0 & -i \\
0 & -i & 0
\end{pmatrix}, \quad z\in \C \setminus (-\infty,q],
\end{equation}
where the superscript $T$ denotes the matrix transpose.
\end{lemma}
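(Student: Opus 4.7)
The plan is to set $\Phi(z) := N_0(z)\, N_0(z)^T$ and to prove via a Liouville-type argument that $\Phi(z) \equiv J^{-1}$, where
\[
J^{-1} = \begin{pmatrix} 1 & 0 & 0 \\ 0 & 0 & i \\ 0 & i & 0 \end{pmatrix}
\]
is the inverse of the constant matrix on the right-hand side of \eqref{N0transpose}. Once this identity is in hand, left-multiplying by $N_0^{-1}$ and right-multiplying by $J$ yields $N_0^{-1} = N_0^T J$, which is exactly \eqref{N0transpose}. Invertibility of $N_0$ is guaranteed by $\det N_{\alpha} = 1$ from Lemma \ref{lemma:localN}.

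First I would verify that $\Phi$ extends analytically across the cut $(-\infty, q]$. Specialising \eqref{eq:Njump1} and \eqref{eq:Njump2} to $\alpha = 0$, the two jump matrices become
\[
A_1 = \begin{pmatrix} 0 & 1 & 0 \\ -1 & 0 & 0 \\ 0 & 0 & 1 \end{pmatrix}, \qquad
A_2 = \begin{pmatrix} 1 & 0 & 0 \\ 0 & 0 & -1 \\ 0 & 1 & 0 \end{pmatrix},
\]
and a direct check gives $A_1 A_1^T = A_2 A_2^T = I$. Hence on each segment $N_{0,+} N_{0,+}^T = N_{0,-} A_j A_j^T N_{0,-}^T = N_{0,-} N_{0,-}^T$, so $\Phi$ has no jump and is analytic on $\mathbb{C}\setminus\{0,q\}$.

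Next I would show that the two isolated singularities are removable. From \eqref{Nalphaatq} with $\alpha = 0$ we have $N_0(z) = \O(|z-q|^{-1/4})$ near $q$, and from \eqref{Nalphaat0} together with \eqref{asymptotics0} we have $N_0(z) = \O(|z|^{-1/3})$ near the origin; hence $\Phi(z) = \O(|z-q|^{-1/2})$ and $\Phi(z) = \O(|z|^{-2/3})$ respectively. Since $\Phi$ is now single-valued in punctured neighbourhoods of $0$ and $q$ with strictly sub-polar growth, a standard Laurent-coefficient estimate (Riemann's removable singularity theorem applied to the Laurent series of a single-valued function with growth below $|z-z_0|^{-1}$) forces both singularities to be removable, so $\Phi$ is entire.

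Finally, I would pin down the constant value by reading off the leading behaviour at infinity. Writing \eqref{Ninfty} with $\alpha=0$ as $N_0(z) = (I + \O(1/z))\, D(z)\, L$ with $D(z) = \diag(1, z^{1/4}, z^{-1/4})$ and
\[
L = \begin{pmatrix} 1 & 0 & 0 \\ 0 & \frac{1}{\sqrt{2}} & \frac{i}{\sqrt{2}} \\ 0 & \frac{i}{\sqrt{2}} & \frac{1}{\sqrt{2}} \end{pmatrix},
\]
a one-line multiplication gives $L L^T = J^{-1}$. Since the only nonzero off-diagonal entries of $J^{-1}$ sit at positions $(2,3)$ and $(3,2)$, the factors $z^{1/4}$ and $z^{-1/4}$ from $D$ cancel exactly in the product $D\, J^{-1}\, D$, so $D\, J^{-1}\, D = J^{-1}$. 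Therefore $\Phi(z) = J^{-1} + \O(1/z)$ as $z \to \infty$, and Liouville's theorem applied to the entire function $\Phi - J^{-1}$ gives $\Phi \equiv J^{-1}$. The main obstacle in the argument is establishing the removability at the branch points $0$ and $q$; the orthogonality of $A_1, A_2$ and the infinity computation are straightforward, but it is essential that the sub-polar exponents $1/2$ and $2/3$ coming from the RH problem for $N_{\alpha}$ are both strictly less than $1$.
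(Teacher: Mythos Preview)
Your proof is correct and follows essentially the same approach as the paper: both form $N_0 N_0^T$, verify it has no jumps (since the $\alpha=0$ jump matrices are orthogonal), argue that the singularities at $0$ and $q$ are removable from the sub-polar growth, and identify the constant value via the asymptotics at infinity together with Liouville's theorem. You simply spell out in more detail the orthogonality check $A_j A_j^T = I$ and the computation $D\,LL^T\,D = J^{-1}$, which the paper leaves implicit.
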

\begin{proof}
Observe from \eqref{eq:Njump1}--\eqref{eq:Njump2} that $N_0$ and $N_0^{-T}$ 
have the same jumps on $\Delta_1$ and $\Delta_2$, so that $N_0 N_0^T$ is analytic
in $\C \setminus \{0,q\}$. The singularies at $0$ and $q$ are removable because
of \eqref{Nalphaatq} and \eqref{Nalphaat0}, so that $N_0 N_0^T$ is entire.  
By \eqref{Ninfty},
\[
N_0(z) N_0^T(z) =  \begin{pmatrix}
1 & 0 & 0 \\
0 & 0 & i \\
0 & i & 0
\end{pmatrix} + \mathcal{O}\left(\frac{1}{z}\right), \quad z\to \infty,
\]
and the assertion \eqref{N0transpose} follows by  Liouville's theorem.
\end{proof}

As a consequence, we obtain the following corollary.
\begin{lemma} \label{lemma:K}
The constants $K_j$ from \eqref{eq:F1F2F3} satisfy the
relation
\begin{equation} \label{relationK}
	  K_1^2 - 2 i K_2 K_3  = 0.
\end{equation}
\end{lemma}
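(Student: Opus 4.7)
The plan is to exploit Lemma \ref{lemma:inverseN_0}, which gives $N_0 N_0^T$ as an explicit constant matrix (identity in position $(1,1)$ and $i$ in positions $(2,3)$ and $(3,2)$, zero elsewhere). Reading off the $(1,1)$, $(2,2)$, and $(2,3)$ entries using the explicit formulas \eqref{N0def}--\eqref{eq:F1F2F3} together with $D(w) = (w - w_q)(w - w_\infty)$ produces three scalar identities
\[
\sum_{k=1}^3 F_1(w_k(z))^2 = 1, \quad \sum_{k=1}^3 F_2(w_k(z))^2 = 0, \quad \sum_{k=1}^3 F_2(w_k(z))F_3(w_k(z)) = i,
\]
valid for every $z \in \C \setminus (-\infty, q]$. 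Each left-hand side is a symmetric rational function of $w_1, w_2, w_3$ and is therefore expressible through Vieta's formulas for the cubic $c^2 w^3 - 2 c w^2 + w - 1/z = 0$ coming from \eqref{eqmodif} (which yields $\sum w_k = 2/c$, $\sum w_k^2 = 2/c^2$, etc.).

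The crucial computational input will be that both $w_q = 1/(3c)$ and $w_\infty = 1/c$ are roots of the derivative polynomial $3 c^2 w^2 - 4 c w + 1 = (3cw-1)(cw-1)$. Consequently, using the standard identity $\sum_k 1/(a - w_k) = P'(a)/P(a)$ with $P(w) = c^2 w^3 - 2 c w^2 + w - 1/z$, one has
\[
\sum_{k=1}^3 \frac{1}{w_k - w_q} = 0 = \sum_{k=1}^3 \frac{1}{w_k - w_\infty}.
\]
These two vanishings annihilate the partial-fraction remainders after dividing each summand by $(w - w_q)$ (for $F_1^2$ and $F_2 F_3$) or by $D(w)$ (for $F_2^2$), leaving only polynomial parts to be summed via Vieta.

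Carrying out this evaluation yields three equations in the unknowns $K_1^2$, $K_2 K_3$, and $w^*$. I expect the $(1,1)$ identity to collapse to $3 K_1^2/c^2 = 1$, giving $K_1^2 = c^2/3$. The $(2,2)$ identity, after a two-pole partial fraction of $w^2(w - w^*)^2/D(w)$, should reduce to $3 K_2^2 (w^* - 1/c)(w^* - 3/c) = 0$; since the proposition assumes $w^* \neq w_\infty = 1/c$, this forces $w^* = 3/c$. Finally, the $(2,3)$ identity should reduce to $3 K_2 K_3 (1 - c w^*)/c^2 = i$, and substituting $c w^* = 3$ produces $K_2 K_3 = -i c^2/6$. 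Combining, $2 i K_2 K_3 = 2 i(-i c^2/6) = c^2/3 = K_1^2$, which is precisely the relation \eqref{relationK}.

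The main obstacle is the partial-fraction bookkeeping in the $(2,2)$ computation, since $w^2(w - w^*)^2/D(w)$ has simple poles at both $w_q$ and $w_\infty$ and requires a careful decomposition; but this is also the step that pins down $w^* = 3/c$ and thereby creates the cancellation between $K_1^2$ and $2 i K_2 K_3$. Morally, the identity \eqref{relationK} reflects the compatibility between the normalization at infinity \eqref{Ninfty} (which determines $K_1^2$ and $K_2 K_3(1 - c w^*)$ separately) and the rank-one condition at the origin \eqref{Nalphaat0} that, via Lemma \ref{lemma:inverseN_0}, fixes $w^* = 3 w_\infty$.
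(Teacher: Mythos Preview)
Your computation is correct and the algebra checks out: the three symmetric-function sums do collapse exactly as you predict, yielding $K_1^2 = c^2/3$, $w^* = 3/c$, and $K_2 K_3 = -ic^2/6$, from which \eqref{relationK} follows. But this is a genuinely different route from the paper's proof, and considerably more laborious. The paper does not compute $N_0 N_0^T$ globally via Vieta; instead it lets $z \to 0$ in the identity $N_0^T \begin{pmatrix} 1 & 0 & 0 \\ 0 & 0 & -i \\ 0 & -i & 0 \end{pmatrix} N_0 = I$ and reads off the coefficient of $z^{-2/3}$ using the rank-one expansion \eqref{asymptoticsN0at0}. That coefficient is a scalar multiple of $\begin{pmatrix} K_1 & K_2 & K_3 \end{pmatrix} \begin{pmatrix} 1 & 0 & 0 \\ 0 & 0 & -i \\ 0 & -i & 0 \end{pmatrix} \begin{pmatrix} K_1 \\ K_2 \\ K_3 \end{pmatrix}$ times a nonzero rank-one matrix, and its vanishing is precisely \eqref{relationK}. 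So the paper's argument is a two-line local expansion, whereas yours is a global symmetric-function computation that, as a bonus, pins down $K_1^2$, $K_2 K_3$, and $w^*$ individually. One small inaccuracy in your closing remarks: the constraint $w^* = 3/c$ in your approach comes from the vanishing of the $(2,2)$ entry of $N_0 N_0^T$, which is dictated by the asymptotics at infinity \eqref{Ninfty}, not by the rank-one behavior at the origin.
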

\begin{proof}
From  \eqref{N0transpose} we obtain
\[  N_0^{T}(z)\begin{pmatrix} 1 & 0 & 0 \\ 0 & 0 & -i \\ 0 & -i & 0 \end{pmatrix} N_0(z)= I. \]
Then insert the behavior \eqref{asymptoticsN0at0}  for both $N_0^T(z)$ and $N_0(z)$
and observe that the coefficient of $z^{-2/3}$ must vanish. This yields
\[ 	\begin{pmatrix} K_1 & K_2 & K_3 \end{pmatrix} 
	 \begin{pmatrix} 1 & 0 & 0 \\ 0 & 0 & -i \\ 0 & -i & 0 \end{pmatrix} 
	  \begin{pmatrix} K_1 \\ K_2 \\ K_3 \end{pmatrix} = 0,
	  \]
	  which is  \eqref{relationK}.
\end{proof}

\section{Local parametrices} \label{sec:parametrixQ}

The next step is the construction of local parametrices around
the branch points $q$ and $0$. We shall be brief about the local
parametrix $P$ around $q$. The main issue will be the 
construction of the local parametrix $Q$
around the origin.

\subsection{Parametrix $P$ around $q$}
We build $P$ in a fixed disk $D(q^*, r_q)$ around $q^* = 27 c^*/4$
with some (small) radius $r_q > 0$. For $t$ sufficiently
close to $t^*$ we then have that $q$ is in this neighborhood,
and we ask that $P$ should satisfy.

\begin{enumerate}
\item $P$ is analytic on $D (q^{*},r_q)\setminus \Sigma_{T}$;
\item  $P$ has the same jumps as $T$ has on $D (q^*,r_q)\cap \Sigma_{T}$, see
	\eqref{JT3}--\eqref{JT5};
\item as $n\to \infty$, 
	\begin{equation}\label{matchP}
	P(z)=N_{\alpha} (z)\left(I+\O\left(n^{-1} \right) \right)\quad
\text{uniformly for }|z-q^{*}| = r_q. 
\end{equation}
\end{enumerate}

The construction of  $P$ is done in a
standard way by means of Airy functions, see
\cite{De,DKMVZ1,DKMVZ2}. We will not give any details.

\subsection{Parametrix $Q$ around $0$: required properties}\label{local0}

The construction of the parametrix at the origin is the main novel
ingredient in the present RH analysis. A similar problem 
has been previously solved in \cite[Section 8]{BK4}, which serves as an 
inspiration for the approach we follow here. 

We want to define a matrix $Q$ in a neighborhood
$D (0,r_0)$ of the origin such that
\begin{enumerate}
\item $Q$ is analytic on $D (0,r_0)\setminus \Sigma_{T}$, where $ \Sigma_{T}$ has been 
defined in Section \ref{sec6}, see also Figure~\ref{deform3};
\item $Q$ has the same jumps as $T$ has on  $\Sigma_{T}  \cap D (0,r_0)$, 
see \eqref{JT1}--\eqref{JT4}. That is, we have
\begin{equation}\label{jumpQ}
	Q_{+} (z)=Q_{-} (z) J_{Q} (z), \qquad z \in \Sigma_T \cap D(0,r),
\end{equation}
where $J_{Q}$ is given by 
{\allowdisplaybreaks
\begin{align}
J_Q(x)&=
    \begin{pmatrix}
    1 & 0                  & 0 \\
	0 & 0 & -|x|^{-\alpha}\\
	0 & |x|^{\alpha} & 0
    \end{pmatrix}, &&  x\in\Delta_{2} \cap D(0,r), \label{JQ1}\\
J_Q(z)&=
   I + e^{\pm \alpha \pi i} z^{-\alpha}e^{n (\lambda_{2}-\lambda_{3})(z)} E_{23}, 
   && z\in\Delta_{2}^{\pm} \cap D(0,r), 
   \label{jQ2}\\
J_Q(x) &=
    \begin{pmatrix}
0 &  x^{\alpha}          & 0    \\
-x^{-\alpha} & 0 & 0 \\
0 &  0 & 1\\
    \end{pmatrix}, &&  x\in\Delta_1 \cap D(0,r), \label{JQ3}
\\
J_Q(z)&=
    I + z^{-\alpha} e^{n (\lambda_{2}-\lambda_{1})(z)} E_{2 1}, \quad z\in\Delta_{1}^{\pm} \cap D(0,r), \label{JQ4}
\end{align}}
\item As $z \to 0$, $Q$ has the same behavior as $T$ has, see item 4.\ in the RH problem \ref{rhpforT}.
\end{enumerate}

As we will see in Section \ref{final}, the radius $r_0$ will actually
depend on $n$, namely $r_0=n^{-1/2}$, so that the parametrix $Q$ will be
defined in a disk shrinking neighborhood as $n\to \infty$. 

Note that we did not state a matching condition for $Q$. 
Usually one asks for a matching condition of the type 
\[ Q (z)=N_{\alpha} (z) (I+ \O(1/n^{\kappa})), \]
as $n\to \infty$,  uniformly for $z$ on the circle $|z|= r_0$, 
with some $\kappa > 0$. In the present situation we are not able to get such a matching condition.
We will only be able to match $Q (z)$ with $N_{\alpha}(z)$ up to a bounded factor.
Hence, it will be necessary to introduce an additional
transformation, defined globally in the complex plane, as a last step
of the Riemann-Hilbert analysis, see Section \ref{final}.

\subsection{Reduction to constant jumps}

The jump condition \eqref{jumpQ} can be reduced to a condition
with constant jump matrices as follows. 
We put
\begin{equation}\label{defLambda2}
	\Lambda_n(z) =\diag (1,z^{\alpha},1) \, \diag \left(e^{-n \lambda_{1} (z)},e^{-n \lambda_{2} (z)},e^{-n \lambda_{3} (z)}\right),
\end{equation}
with $z^{\alpha} = |z|^{\alpha} e^{i \alpha \arg(z)}$ and $\arg z \in (0, 2\pi)$ is defined with the branch cut $[0, +\infty)$.
Then the jump matrices $J_Q$ from \eqref{JQ1}--\eqref{JQ4} factorize as 
\begin{equation}\label{defjQ0}
J_Q (z) =\Lambda_{n,-}^{-1} (z)\, J_Q^{0} (z) \, \Lambda_{n,+} (z), 
\end{equation}
where
{\allowdisplaybreaks
\begin{align}
J_Q^{0}(x)&=
    \begin{pmatrix}
    1 & 0                  & 0 \\
0 & 0 & -e^{-\alpha\pi i}\\
0 & e^{-\alpha\pi i} & 0
    \end{pmatrix}, \quad x\in\Delta_{2}, \label{jQ01}\\
J_Q^{0}(z)&=
   I + e^{\pm \alpha \pi i} E_{23}, \quad z\in\Delta_{2}^{\pm}, \label{jQ02}\\
J_Q^{0} (x) &=
    \begin{pmatrix}
0 &  1         & 0    \\
-1 & 0 & 0 \\
0 &  0 & 1\\
    \end{pmatrix}, \quad  x\in\Delta_1,\label{jQ03}
\\
J_Q^{0}(z)&=
    I + E_{2 1}, \quad z\in\Delta_{1}^{\pm}, \label{jQ04}
\end{align}}

Observe that the jump matrices $J_Q^{0}$ agree with the jump matrices $J_{\Phi}$ in the RH problem \ref{rhpforPhi}
for $\Phi_{\alpha}$, see Figure~\ref{fig:localanalysis2bis}, except that the jump matrices $J_{\Phi}$ are on six infinite rays 
$\Sigma_{\Phi}$ emanating from the origin.
We will therefore look for $Q$ in the form
\begin{equation} \label{Qform} 
	Q(z) = E_n(z) \Phi_{\alpha}(f_n(z); \tau_n(z)) \Lambda_n(z) 
	\end{equation}
where $E_n(z)$ and $\tau_n(z)$ are analytic in $D(0,r_0)$ and 
where $f_n$ is a conformal map on $D(0,r_0)$ that maps the contours $\Sigma_T \cap D(0,r_0)$
into the six rays $\Sigma_{\Phi}$ so that $[0,r_0)$ is mapped into the positive real axis.

For any choice of conformal $f_n$, and analytic $E_n$ and $\tau_n$, the matrix valued $Q$
defined by \eqref{Qform} will then satisfy the required jumps \eqref{jumpQ}.

\subsection{Definition of $\Phi_{\alpha}(z; \tau)$} \label{subs:defPhi}

We next construct the matrix valued function $\Phi_{\alpha}(z) = \Phi_{\alpha}(z;\tau)$
that solves the RH problem \ref{rhpforPhi} stated in the introduction.

As already mentioned in the introduction we use the following third order 
linear differential equation
\begin{equation} \label{ODEnew}
	zp''' + \alpha p'' - \tau p' - p = 0,
\end{equation}  
with $\alpha>-1$ and $\tau\in \C$.
Then $z=0$ is a regular singular point of this ODE with Frobenius indices 
$0$, $1$, and $-\alpha+2$. There are two linearly independent
entire solutions, and one solution that branches at the origin.

Due to the special form of the ODE \eqref{ODEnew} (the coefficients are at most
linear in $z$), it can be solved with Laplace transforms. 
We find solutions with an integral representation
\[ p(z) = C \int_{\Gamma}
    t^{\alpha-3} e^{\tau/t} e^{1/(2t^2)} e^{zt} \, dt, 
\]
where $\Gamma$ is an appropriate contour in the complex $t$-plane and $C$ is a constant.
A basis of solutions of \eqref{ODEnew} can be chosen by
selecting different contours $\Gamma$. We will make use of four
contours $\Gamma_j$, $j=1,2,3,4$, defined as follows, see also Figure~\ref{contourfct}:
\begin{figure}[t]
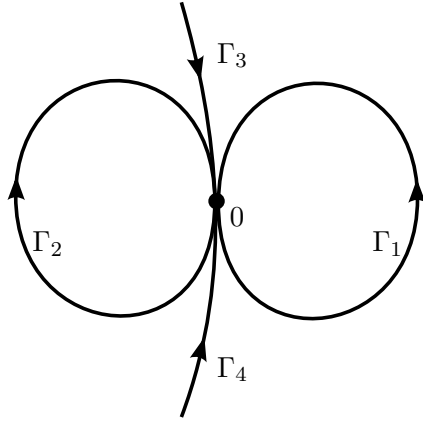

\centering 
\begin{overpic}[scale=1.7]%
{pathsforp}%
\put(53,46){$0 $}
  \put(6,40){$\Gamma_{2} $}
  \put(87,40){$\Gamma_{1}  $}
   \put(50,85){$\Gamma_{3} $}
  \put(50,10){$\Gamma_{4}  $}
    \end{overpic}
\caption{The contours of integration $\Gamma_{j}$, $j=1,\ldots 4$,
used in the definition of the functions $p_{j}$.}
\label{contourfct}
\end{figure}

\begin{enumerate}
\item[(a)] We let $\Gamma_1$ be a simple closed contour passing
through the origin, but otherwise lying in the right half-plane and
which is tangent to the imaginary axis. $\Gamma_1$ is oriented
counterclockwise and we put
\begin{equation}\label{integralForp1}
	 p_1(z) = \int_{\Gamma_1}
    t^{\alpha-3} e^{\tau/t} e^{1/(2t^2)} e^{zt} \, dt. 
    \end{equation}
We choose the branch of $t^{\alpha-3} = |t|^{\alpha-3} e^{i (\alpha-3) \arg t}$
with $-\pi/2 < \arg t < \pi/2$.  

\item[(b)] $\Gamma_2$ is the reflection of $\Gamma_1$ in the
imaginary axis, oriented clockwise. We put
\begin{equation}\label{integralForp2}
p_2(z) = e^{-\alpha \pi i} \int_{\Gamma_2}
    t^{\alpha-3} e^{\tau/t} e^{1/(2t^2)} e^{zt} \, dt.
\end{equation}
In \eqref{integralForp2} we define the branch of $t^{\alpha-3}$
with $\pi/2 < \arg t < 3 \pi/2$.

\item[(c)] $\Gamma_3$ is an unbounded contour in the upper
half-plane that starts at infinity at an angle where $\Re(zt) < 0$
as $t \to \infty$, and ends at the origin along the positive
imaginary axis. We put
\begin{equation}\label{integralForp3}
	p_3(z) = e^{-\alpha \pi i} \int_{\Gamma_3}
    t^{\alpha-3} e^{\tau/t} e^{1/(2t^2)} e^{zt} \, dt. 
    \end{equation}
In \eqref{integralForp3} we take $t^{\alpha-3}$ 
with $0 < \arg t < \pi$.

The condition $\Re(zt) < 0$ is necessary to have convergence of
the integral. This condition can be met with a contour that is in
the upper half-plane if and only if $-\pi/2 < \arg z < 3\pi/2$.
Therefore $p_3$ is well-defined and analytic in $\mathbb C
\setminus i\mathbb R_-$, that is, with a branch cut along the
negative imaginary axis.

\item[(d)] $\Gamma_4$ is similar to $\Gamma_3$, but in the lower
half-plane. It is an unbounded contour in the lower half-plane
starting at infinity at an angle where $\Re(zt) < 0$ as $t \to
\infty$, and it ends at the origin along the negative imaginary
axis. We put
\begin{equation}\label{integralForp4}
	 p_4(z) = e^{\alpha \pi i} \int_{\Gamma_4}
    t^{\alpha-3} e^{\tau/t} e^{1/(2t^2)} e^{zt} \, dt, 
    \end{equation}
In \eqref{integralForp4} the branch of $t^{\alpha-3}$ is defined with
$-\pi < \arg t < 0$.

Then $p_4$ is well-defined and analytic in $\mathbb C \setminus
i\mathbb R_+$, thus with a branch cut along the positive imaginary
axis.
\end{enumerate}

With these definitions it is clear that $p_1$ and $p_2$ are entire
functions, while $p_3$ and $p_4$ have a branch point at the
origin. The four solutions are not linearly independent, but any
three of them are.

We define $\Phi_{\alpha}$ in each of the sectors determined by the 
six rays: $\arg z = 0, \pm \pi/4,
\pm 3 \pi/4, \pi$, as shown in Figure~\ref{fig:localanalysis2bis}, 
as a Wronskian matrix using three of the functions $p_j$.

\begin{definition} \label{Phidefinition}
We define $\Phi_{\alpha}$ in the six sectors as follows.
{\allowdisplaybreaks
\begin{align} \label{defPhi1}
\Phi_{\alpha}(z;\tau)  & = \frac{e^{\tau^2/6}}{\sqrt{2\pi}} \begin{pmatrix}
-p_{4}(z) & p_{3}(z) & p_{1}(z)\\
-p_{4}'(z) & p_{3}'(z) & p_{1}'(z)\\
-p_{4}''(z) & p_{3}''(z) & p_{1}''(z)
\end{pmatrix} && 0<\arg z<\pi/4, \\ \label{defPhi2}
\Phi_{\alpha}(z;\tau) & = \frac{e^{\tau^2/6}}{\sqrt{2\pi}} \begin{pmatrix}
p_{2}(z) & p_{3}(z) & p_{1}(z)\\
p_{2}'(z) & p_{3}'(z) & p_{1}'(z)\\
p_{2}''(z) & p_{3}''(z) & p_{1}''(z)
\end{pmatrix}  && \pi/4<\arg z<3\pi/4,\\ \label{defPhi3}
\Phi_{\alpha}(z;\tau)  & = \frac{e^{\tau^2/6}}{\sqrt{2\pi}} \begin{pmatrix}
p_{2}(z) & p_{3}(z) & -e^{-\alpha\pi i}p_{4}(z)\\
p_{2}'(z) & p_{3}'(z) & -e^{-\alpha\pi i}p_{4}'(z)\\
p_{2}''(z) & p_{3}''(z) & -e^{-\alpha\pi i}p_{4}''(z)
\end{pmatrix} && 3\pi/4<\arg z<\pi, \\ \label{defPhi4}
\Phi_{\alpha}(z;\tau)  & = \frac{e^{\tau^2/6}}{\sqrt{2\pi}} \begin{pmatrix}
p_{2}(z) & p_{4}(z) & e^{\alpha\pi i}p_{3}(z)\\
p_{2}'(z) & p_{4}'(z) & e^{\alpha\pi i}p_{3}'(z)\\
p_{2}''(z) & p_{4}''(z) & e^{\alpha\pi i}p_{3}''(z)
\end{pmatrix}  && -\pi<\arg z<-3\pi/4,\\ \label{defPhi5}
\Phi_{\alpha}(z;\tau)  & = \frac{e^{\tau^2/6}}{\sqrt{2\pi}} \begin{pmatrix}
p_{2}(z) & p_{4}(z) & p_{1}(z)\\
p_{2}'(z) & p_{4}'(z) & p_{1}'(z)\\
p_{2}''(z) & p_{4}''(z) & p_{1}''(z)
\end{pmatrix} && -3\pi/4<\arg z<-\pi/4,\\ \label{defPhi6}
\Phi_{\alpha}(z;\tau)  & = \frac{e^{\tau^2/6}}{\sqrt{2\pi}} \begin{pmatrix}
p_{3}(z) & p_{4}(z) & p_{1}(z)\\
p_{3}'(z) & p_{4}'(z) & p_{1}'(z)\\
p_{3}''(z) & p_{4}''(z) & p_{1}''(z)
\end{pmatrix} && -\pi/4<\arg z<0.
\end{align}}
\end{definition}

The scalar factor $\frac{e^{\tau^2/6}}{\sqrt{2\pi}}$ is needed 
in \eqref{defPhi1}--\eqref{defPhi6} in order to have the exact asymptotic
behavior \eqref{Phiasymptotics1} in the RH problem \ref{rhpforPhi}.
 
The functions $p_j$ clearly depend on $\alpha$ and $\tau$, although
we did not emphasize it in the notation.

\begin{proposition} \label{prop:propertiesPhi}
Let $\alpha > -1$ and $\tau \in \C$. Then 
	$\Phi_{\alpha}(z;\tau)$ as defined above satisfies the RH problem \ref{rhpforPhi} 
	stated in the introduction.
\end{proposition}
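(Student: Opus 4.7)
The plan is to verify conditions 1--4 of RH problem \ref{rhpforPhi} directly from the definition \eqref{defPhi1}--\eqref{defPhi6}. The unifying observation is that each column of $\Phi_\alpha$ has the form $(p,p',p'')^T$ with $p$ a solution of the third-order ODE \eqref{ODEnew}, so $\Phi_\alpha$ is a fundamental matrix of the first-order system equivalent to \eqref{ODEnew}, and the six sectors are chosen so that three simultaneously analytic $p_j$'s are available in each sector. Item 1 (analyticity) is then immediate from the domains of $p_1, p_2, p_3, p_4$ recorded in subsection \ref{subs:defPhi}: $p_1, p_2$ are entire, $p_3$ is analytic on $\C \setminus i\R_-$, and $p_4$ on $\C \setminus i\R_+$, and each of the six sector formulas uses only functions analytic in that sector.

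For the jump relations (item 2), on each of the six rays I must exhibit a linear identity among the $p_j$'s appearing on either side. For example, on $\arg z = \pi/4$ one needs $p_2 = p_3 - p_4$; on $\arg z = 3\pi/4$ the identity encodes the branch jump of $p_3$ across $i\R_+$ and contributes the factor $e^{\alpha \pi i}$; on the negative real axis both $p_3$ and $p_4$ cross their branch cuts, producing the $e^{-\alpha \pi i}$ entries in the jump; and on the positive real axis the antisymmetric jump matrix $\bigl(\begin{smallmatrix} 0 & 1 & 0 \\ -1 & 0 & 0 \\ 0 & 0 & 1 \end{smallmatrix}\bigr)$ reflects the closed-contour identity encoded by $\Gamma_1 = -(\Gamma_3 - \Gamma_4)$ up to branch corrections. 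Each of these identities I would establish by deforming one contour $\Gamma_j$ into a union of the others, carefully tracking the chosen branch of $t^{\alpha-3}$ across the cut at $t=0$; each identity among the $p_j$'s then propagates to their derivatives and hence to the column vectors of $\Phi_\alpha$.

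The main obstacle is the asymptotic condition (item 3) at $z\to\infty$, which I would handle by a saddle-point analysis of the integrals. The relevant phase is $\varphi(t) = zt + \tau/t + 1/(2t^2)$, whose critical points satisfy $zt^3 - \tau t - 1 = 0$; for large $z$ these are three perturbed cube roots $t_k^{\ast} = \omega^k z^{-1/3} + \O(z^{-1})$, $k=1,2,3$. Deforming each $\Gamma_j$ onto steepest-descent contours, a single saddle contributes to $p_3$ and to $p_4$, while two contribute to the closed contours $p_1$ and $p_2$. The saddle value expands as $\varphi(t_k^{\ast}) = \theta_k(z;\tau) + \tau^2/6 + \O(z^{-1/3})$ with $\theta_k$ as in \eqref{thetak}; the Gaussian prefactor of order $|\varphi''(t_k^{\ast})|^{-1/2}$, combined with the factor $(t_k^{\ast})^{\alpha-3}$ and the factors $(t_k^{\ast})^j$ that appear in the $j$-th derivative, assembles exactly the diagonal matrix $\diag(z^{1/3},1,z^{-1/3})$, the Vandermonde block in $\omega$, the diagonal phase matrix with entries $e^{\pm \alpha \pi i/3}$, and the scalar $z^{-\alpha/3}/\sqrt{3}$. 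The normalizing factor $e^{\tau^2/6}/\sqrt{2\pi}$ inserted in \eqref{defPhi1}--\eqref{defPhi6} is precisely tuned to cancel the subleading constant and reproduce \eqref{Phiasymptotics1}, \eqref{Phiasymptotics2} on the nose; the two distinct forms for $\Im z > 0$ versus $\Im z < 0$ arise because the enumeration of which pair of saddles lies on the deformed $\Gamma_1$ and $\Gamma_2$ switches across the real axis. The bulk of the work lies in carrying out this book-keeping so that the matrix assembled in each of the six sectors coincides with the prescribed form.

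Finally, for item 4, since $z=0$ is a regular singular point of \eqref{ODEnew} with Frobenius exponents $\{0,1,2-\alpha\}$, the local solution space is spanned by two analytic solutions (which span $p_1, p_2$ near the origin) and one branching solution of order $z^{2-\alpha}$ present in $p_3$ and $p_4$. Two derivatives of the branching column produce precisely the $z^{-\alpha}$ singularity needed so that right-multiplication by the diagonal matrices in \eqref{Phiat01}--\eqref{Phiat03} yields an $\O(1)$ matrix; the assignment of which column carries the branching solution in each of the three sectors $|\arg z|<\pi/4$, $\pi/4 < |\arg z|<3\pi/4$, $3\pi/4 < |\arg z|<\pi$ matches exactly the placement of the $z^\alpha$ entries in the three respective conditions, completing the verification.
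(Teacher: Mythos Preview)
Your approach is essentially the paper's own: the jumps are verified via contour-deformation identities among the integral representations \eqref{integralForp1}--\eqref{integralForp4}, the large-$z$ asymptotics by classical steepest descent on those integrals (the paper carries this out in detail in the subsequent Lemma~\ref{lemma:asymptPhi}), and the behaviour at $0$ from the Frobenius exponents $\{0,1,2-\alpha\}$ of \eqref{ODEnew}. Two small inaccuracies to fix when you write it up: the saddle value is $\theta_k(z;\tau)-\tau^2/6+\O(z^{-1/3})$, not $+\tau^2/6$, and it is this minus sign that the prefactor $e^{\tau^2/6}$ is tuned to cancel; and on the negative real axis neither $p_3$ nor $p_4$ crosses its branch cut (those lie on $i\R_-$ and $i\R_+$ respectively), so the jump there is a pure column permutation with phase factors rather than a new linear relation among the $p_j$.
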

\begin{proof}
It is a tedious but straightforward check based on the integral representations
\eqref{integralForp1}--\eqref{integralForp4} that $\Phi_{\alpha}$ has the
constant jumps on six rays in the complex $z$-plane as given in
\eqref{Phijumps}. 

The asymptotic properties \eqref{Phiasymptotics1}--\eqref{Phiasymptotics2}
follow from a steepest descent analysis for the integrals defining the
functions $p_j$. We give more details about this in the next subsection,
where we also describe the next term in the 
asymptotic expansions of \eqref{Phiasymptotics1}--\eqref{Phiasymptotics2}
since we will need this later on.

The behavior  \eqref{Phiat01}--\eqref{Phiat03} at $0$ follows from the
behavior of the solutions $p_j$ of the ODE \eqref{ODEnew} at $0$. Since $p_1$
and $p_2$ are entire solutions, they are bounded at $0$. The solutions $p_3$ and $p_4$
satisfy
\[ p_j(z) = \O(z^{2-\alpha}), \quad p_j'(z) = \O(z^{1-\alpha}), \quad p_j''(z) = \O(z^{-\alpha}), \qquad j=3,4, \]
as $z \to 0$, which can be found by analyzing the integral  representations \eqref{integralForp3} and \eqref{integralForp4}.
This proves \eqref{Phiat01}--\eqref{Phiat03} in view of the definition of $\Phi_{\alpha}$ in
in Definition \ref{Phidefinition}.
\end{proof}

\subsection{Asymptotics of $\Phi_{\alpha}$}

As before we define $\omega = e^{2 \pi i/3}$ and $\theta_k$ as in \eqref{thetak}.
We also put
\begin{equation} \label{defL}
 L_\alpha(z)  = z^{-\alpha/3}
    \begin{pmatrix} z^{1/3} & 0 & 0 \\ 0 & 1 & 0 \\ 0 & 0 & z^{-1/3}
    \end{pmatrix} \times
    \left\{ \begin{array}{l}
    \begin{pmatrix} \omega & \omega^2 & 1 \\
    1 & 1 & 1 \\ \omega^2 & \omega & 1 \end{pmatrix}
    \begin{pmatrix} e^{\alpha \pi i/3} & 0 & 0 \\ 0 & e^{-\alpha
    \pi i/3} & 0 \\ 0 & 0 & 1 \end{pmatrix}, \\
    \hfill{\text{for } \Im z > 0,} \\[10pt]
    \begin{pmatrix} \omega^2 & -\omega & 1 \\
    1 & -1 & 1 \\ \omega & -\omega^2 & 1 \end{pmatrix}
    \begin{pmatrix} e^{-\alpha \pi i/3} & 0 & 0 \\ 0 & e^{\alpha
    \pi i/3} & 0 \\ 0 & 0 & 1 \end{pmatrix}, \\
    \hfill{\text{for } \Im z < 0,} \end{array} \right.
\end{equation}
where all fractional powers are defined with a branch cut along the
negative real axis. Define also the constant matrices

\begin{align}  \nonumber
M_{\alpha}^+(\tau) & =  \frac{\tau (\tau^{2}+9\alpha -9)}{27}
\diag \left(\omega^{2}, \omega, 1\right) \\  \label{Mplus}
	& + \frac{i\tau}{3\sqrt{3}} 
	\diag\left(\omega^{-\alpha/2}, \omega^{\alpha/2}, 1\right) 
	\begin{pmatrix}	0 & -\omega & 1 \\
	\omega^{2} & 0 & -1 \\
	-\omega^{2} & \omega & 0
	\end{pmatrix} 
	\diag\left(\omega^{\alpha/2}, \omega^{-\alpha/2}, 1\right),\\
	\nonumber 
M_{\alpha}^-(\tau) & =  \frac{\tau (\tau^{2}+9\alpha -9)}{27}
\diag \left(\omega, \omega^2, 1 \right) \\
	& + \frac{i\tau}{3\sqrt{3}} 
	\diag\left(\omega^{\alpha/2}, \omega^{-\alpha/2}, 1\right) 
	\begin{pmatrix}	0 & -\omega^2 & -1 \\ 	\label{Mminus}
	\omega & 0 & -1 \\
	\omega & \omega^2 & 0
	\end{pmatrix} 
	\diag\left(\omega^{-\alpha/2}, \omega^{\alpha/2}, 1\right).
	\end{align}

\begin{lemma} \label{lemma:asymptPhi}
Let $\alpha > -1$ and $\tau \in \mathbb C$. Then we have, as $z \to \infty$,
\begin{align} \label{PhiinUHP}
	\Phi_{\alpha}(z; \tau) =
    \frac{i}{\sqrt{3}} L_\alpha (z)
    \left(I + \frac{M_{\alpha}^+(\tau)}{z^{1/3}}+ \O(z^{-2/3})\right)
    & \begin{pmatrix} e^{\theta_1(z)} & 0 & 0 \\ 0 & e^{\theta_2(z)}
    & 0 \\ 0 & 0 & e^{\theta_3(z)} \end{pmatrix} \\
    &  \text{for } \Im z > 0, \nonumber \\
	\Phi_{\alpha}(z; \tau)  = \label{PhiinLHP}
    \frac{i}{\sqrt{3}} L_\alpha (z)
    \left(I + \frac{M_{\alpha}^-(\tau)}{z^{1/3}}+ \O(z^{-2/3})\right)
    & \begin{pmatrix} e^{\theta_2(z)} & 0 & 0 \\ 0 & e^{\theta_1(z)}
    & 0 \\ 0 & 0 & e^{\theta_3(z)} \end{pmatrix} \\
    & \text{for } \Im z < 0. \nonumber
    \end{align}
The expansions \eqref{PhiinUHP} and \eqref{PhiinLHP} are valid
uniformly for $\tau$ in a compact subset of the complex plane.
\end{lemma}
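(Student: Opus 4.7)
The plan is to establish \eqref{PhiinUHP}--\eqref{PhiinLHP} by applying the classical steepest descent (Laplace) method to the integral representations \eqref{integralForp1}--\eqref{integralForp4} of $p_1,p_2,p_3,p_4$. The phase in all four integrals is
\[
\phi(t;z,\tau) = zt + \frac{\tau}{t} + \frac{1}{2t^{2}},
\]
and the saddle point equation $\phi'(t)=z-\tau/t^{2}-1/t^{3}=0$ is $zt^{3}=\tau t+1$. For large $|z|$ the three saddles are
\[
t_{k}(z;\tau) = \omega^{2k}z^{-1/3}\Bigl(1 + \tfrac{\tau\omega^{k}}{3}z^{-1/3} + \O(z^{-2/3})\Bigr),\qquad k=1,2,3,
\]
and a direct check gives $\phi(t_{k}) = \theta_{k}(z;\tau) - \tfrac{\tau^{2}}{6} + \O(z^{-1/3})$. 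The constant $-\tau^{2}/6$ is precisely what the scalar factor $e^{\tau^{2}/6}/\sqrt{2\pi}$ built into Definition \ref{Phidefinition} was designed to cancel.

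The first step is to match, for each of the six sectors in Figure \ref{fig:localanalysis2bis}, each of the three functions $p_{j}$ appearing in the Wronskian to a single dominating saddle $t_{k(j)}$. One deforms the contour $\Gamma_{j}$ to a steepest descent path through $t_{k(j)}$; the correct choice of $k(j)$ is dictated by the direction at which $\Gamma_{j}$ leaves/enters the origin (and, for $\Gamma_{3},\Gamma_{4}$, the direction at infinity), together with the requirement that $\Re\phi(t)<\Re\phi(t_{k(j)})$ along the descent contour. This identifies the column corresponding to $e^{\theta_{k}}$ in each sector and produces the permutation of diagonal exponentials seen between $\Im z>0$ and $\Im z<0$.

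The second step is the leading-order saddle point estimate. Near $t_{k}$ the natural fluctuation scale is $t-t_{k}\sim z^{-2/3}$ since $\phi''(t_{k})\sim 3\omega^{-k}z^{4/3}$, so the Gaussian integral contributes $\sqrt{2\pi/|\phi''(t_{k})|}$ times a phase coming from the steepest descent direction. The algebraic prefactor $t_{k}^{\alpha-3}$ combined with the differentiation factors $t_{k}^{m}$ (for $p_{j}^{(m)}$) produces the $k$-th column of the matrix $\frac{i}{\sqrt{3}}L_{\alpha}(z)$: the factor $z^{-\alpha/3}$ and the diagonal $\diag(z^{1/3},1,z^{-1/3})$ come from $t_{k}^{\alpha-3}$ and from the $m$th powers, while the two $3\times 3$ matrices $\begin{pmatrix}\omega & \omega^{2} & 1\\1&1&1\\\omega^{2}&\omega&1\end{pmatrix}$ and $\begin{pmatrix}\omega^{2} & -\omega & 1\\1&-1&1\\\omega&-\omega^{2}&1\end{pmatrix}$ in \eqref{defL} encode the different choices of saddle and of the argument of $t^{\alpha-3}$ in the upper and lower half planes. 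The bookkeeping here is the first place where one must be very careful with branch cuts and orientations.

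The third step, and the main obstacle, is extracting the correction $M_{\alpha}^{\pm}(\tau)/z^{1/3}$. Three independent mechanisms contribute at order $z^{-1/3}$ relative to the leading term. First, the $\tau$-dependent shift of the saddle, $t_{k}^{*}=\omega^{2k}z^{-1/3}+\tfrac{\tau}{3}z^{-2/3}+\cdots$, propagates through differentiation in $p_{j}^{(m)}\sim (t_{k}^{*})^{m}$, producing a subleading vector that is \emph{not} proportional to the leading eigendirection $(1,\omega^{2k},\omega^{k})^{T}$ in each column; re-expressing this vector in the basis given by $L_{\alpha}(z)$ is what generates the off-diagonal entries of $M_{\alpha}^{\pm}(\tau)$ in \eqref{Mplus}--\eqref{Mminus}. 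Second, expanding $t^{\alpha-3}$ around $t_{k}^{*}$ gives a scalar correction of order $z^{-1/3}$ that depends linearly on $\alpha$ and contributes to the diagonal part of $M_{\alpha}^{\pm}$. Third, the next term in the Laplace expansion, involving $\phi'''(t_{k})$ and $\phi^{(4)}(t_{k})$, yields an additional diagonal scalar. Collecting all three contributions, converted into the $L_{\alpha}$-basis and summed over the three columns, reproduces exactly the explicit matrices \eqref{Mplus}--\eqref{Mminus}; this is the calculationally intensive but entirely routine part. Uniformity in $\tau$ on compact sets is automatic, since $t_{k}^{*}(z;\tau)$, the steepest descent paths, and all Taylor coefficients depend analytically on $\tau$ and the estimates along the descent contours can be taken uniform in $\tau$.
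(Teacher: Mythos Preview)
Your proposal is correct and, for the leading order, essentially coincides with the paper's argument: the paper too applies classical steepest descent to the integrals \eqref{integralForp1}--\eqref{integralForp4}, locates the saddles $t_k=\omega^{2k}z^{-1/3}+\tfrac{\tau}{3}\omega^{k}z^{-2/3}+\O(z^{-4/3})$, evaluates $\phi(t_k)=\theta_k-\tau^2/6+\O(z^{-1/3})$, and matches the columns to the structure of $L_\alpha(z)$ sector by sector.

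The genuine difference is in how the subleading matrix $M_\alpha^{\pm}(\tau)$ is obtained. You propose to extract it directly from the higher-order steepest descent expansion, decomposing the $z^{-1/3}$ correction into three sources (saddle shift propagated through the $t^{m}$ factors, Taylor expansion of $t^{\alpha-3}$, and the $\phi'''$--$\phi^{(4)}$ term of the Laplace expansion). The paper acknowledges that this route works but instead exploits the first-order matrix ODE
\[
z\,\Phi_\alpha'(z)=\begin{pmatrix}0&z&0\\0&0&z\\1&\tau&-\alpha\end{pmatrix}\Phi_\alpha(z),
\]
substitutes the ansatz $\frac{i}{\sqrt{3}}L_\alpha(z)\bigl(I+M z^{-1/3}+\cdots\bigr)\diag(e^{\theta_k})$ and reads off $M$ by matching coefficients (done with Maple). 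Your approach is more self-contained and makes the origin of each piece of \eqref{Mplus}--\eqref{Mminus} transparent; the ODE approach trades that transparency for a purely algebraic computation that avoids the delicate bookkeeping of contour orientations and branch choices at subleading order. Both are valid, and the paper explicitly mentions yours as the alternative.
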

\begin{proof} We apply the
classical steepest descent analysis to the integral representations
\eqref{integralForp1}--\eqref{integralForp4} of the functions $p_{j}$. 
We set $\sigma (t;z,\tau)=zt+\tau/t+1/ (2t^{2})$. The
saddle points are solutions of
\begin{equation}\label{saddle}
\frac{\partial \sigma}{\partial t}
=z-\frac{\tau}{t^{2}}-\frac{1}{t^{3}}=0.\end{equation}
As $z\to\infty$, while
$\tau$ remains bounded, the three solutions to \eqref{saddle} have the following expansion:
\begin{equation}\label{estimt}
	t_{k}=t_{k} (z;\tau) = 
	\omega^{2k} z^{-1/3} + \omega^{k} \frac{\tau}{3}z^{-2/3}
	+\O(z^{-4/3}),\qquad k = 1,2,3,
\end{equation}
and the corresponding values at the saddles are
\begin{align} \nonumber
	\sigma (t_{k} (z;\tau);z,\tau) & = 
	\frac{3}{2}\omega^{2k} z^{2/3} + \tau\omega^{k} z^{1/3} -
	\frac{\tau^{2}}{6} + 	\O(z^{-1/3}) \\
	& \label{estimtheta} =
		\theta_k(z; \tau) - \frac{\tau^{2}}{6}  +
	\O (z^{-1/3}), \qquad \text{as } z\to\infty,
\end{align}
with $\theta_{k}$ introduced in \eqref{thetak}.

If $C_{k}$ is the steepest descent path through the saddle point $t_{k}$, 
we obtain from \eqref{estimtheta} and standard steepest descent arguments that
\begin{equation} \label{pathCkestimate}
	\int_{C_{k}} t^{\alpha-3} e^{\tau/t}e^{1/(2t^{2})} e^{zt} dt = 
	\pm \sqrt{\frac{2\pi}{-\frac{\partial^{2}\sigma}{\partial t^{2}}
	(t_{k} ;z,\tau)}}\, t_{k}^{\alpha-3} e^{-\tau^2/6}
	e^{\theta_k(z;\tau)} (1+\O (z^{-1/3}))
\end{equation}
as $z \to \infty$, 
where the $\pm$ sign depends on the orientation of the steepest
descent path. Plugging \eqref{estimt} into \eqref{pathCkestimate}, and using the fact that
\[
	\frac{\partial^{2} \sigma}{\partial t^{2}}
		=\frac{3}{t^{4}}\left(1+\frac{2}{3}\tau t \right),
\]
we obtain as $z \to \infty$
\begin{equation}\label{steep}
	\int_{C_{k}} t^{\alpha-3} e^{\tau/t}e^{1/(2t^{2})} e^{zt} dt = 
	\pm \sqrt{\frac{-2\pi}{3}}\,e^{- \tau^{2}/6}
	\left(\omega^{k}z^{-1/3}\right)^{\alpha - 1} 
	e^{\theta_{k}(z; \tau)} \left(1+\O (z^{-1/3})\right).
\end{equation}

\begin{figure}[t]
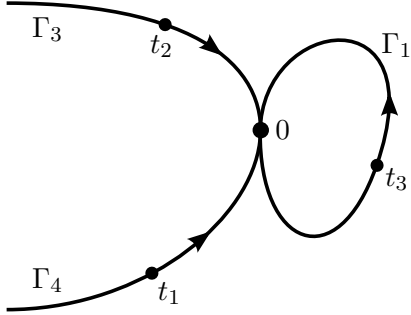

\centering 
\begin{overpic}[scale=1.7]%
{pathsforpDeform}%
\put(69,44){$0 $}
  \put(96,66){$\Gamma_{1}  $} 
   \put(7,70){$\Gamma_{3} $}
  \put(7,6){$\Gamma_{4}  $}
   \put(37,66){$t_2  $}
    \put(39,3){$t_1  $}
     \put(97,32){$t_3  $}
    \end{overpic}
\caption{Deformation of the contours of integration $\Gamma_1$, $\Gamma_3$ and $\Gamma_4$ 
into the steepest descent paths for  the integrals defining $p_1$, $p_3$ and $p_4$ in the
case where $0<\arg z<\pi/4$.}
\label{fig: pathsforpDeform}
\end{figure}

Consider now $\Phi_{\alpha}(z)$ as defined in \eqref{defPhi1} for $0<\arg z<\pi/4$. We see
that only $p_1$, $p_3$ and $p_4$ play a role in this sector. The corresponding contours
of integration can be deformed into the steepest descent paths
through one of the saddle points as shown in Figure\ref{fig: pathsforpDeform}. 

Hence, in the given sector, the functions $p_1$, $p_3$ and $p_4$ have an 
asymptotic behavior of the form \eqref{steep} for some particular $k$
and some choice of the $\pm$ sign, and multiplied by $e^{\pm \alpha \pi i}$ in case of $p_3$ and $p_4$,
see the formulas \eqref{integralForp1}, \eqref{integralForp3}, and \eqref{integralForp4}.
This will lead to the asymptotic expansion for the first row of \eqref{PhiinUHP} for $0 < z < \pi/4$,
except for the determination of $M_{\alpha}^+(\tau)$.
The second and rows can be dealt with similarly. Here we have to 
consider the first and second derivatives of $p_1$, $p_3$, and $p_4$,
which have similar integral representations \eqref{integralForp1}, \eqref{integralForp3}, and \eqref{integralForp4}, 
but with $\alpha$ replaced by $\alpha+1$ and $\alpha+2$, respectively.

The other sectors can be analyzed in a similar way. Tracing the behavior of the dominant saddle point 
we find that the asymptotic expression just obtained remains valid in the full upper half plane,
while in the lower half plane we find \eqref{PhiinLHP}, again up to the determination of
$M_{\alpha}^-(\tau)$.

What remains is to obtain the constants $M_{\alpha}^{\pm}(\tau)$ that appear in the $\O(z^{-1/3})$ term in
the expansions \eqref{PhiinUHP} and \eqref{PhiinLHP}. This can be done by calculating
the next terms in the asymptotic expansion of the integrals. Alternatively,
we can use the fact that  $\Phi_{\alpha}$ solves the first-order matrix-valued ODE 
\[
	z \Phi_{\alpha}'(z) = \begin{pmatrix}
	0 & z & 0 \\
	0 & 0 & z \\
	1 & \tau & -\alpha
	\end{pmatrix}\Phi_{\alpha}(z).
\]
Substituting into this  the asymptotic expansions for $\Phi_{\alpha}$ and
equating terms on both sides, we find after lengthy calculations (that were actually performed
with the help of Maple)
the formulas for $M_{\alpha}^{\pm}(\tau)$ as given in \eqref{Mplus} and \eqref{Mminus}.
\end{proof}

\subsection{Definition and properties of $f(z)$ and $\tau(z)$} \label{sec:ftau}

We will take the local parametrix $Q$ in the form, see also \eqref{Qform}
\begin{equation} \label{Qform2}
	Q(z) = E_n(z) \Phi_{\alpha}(n^{3/2} f(z); n^{1/2} \tau(z))
    \begin{pmatrix} e^{-n \lambda_1(z)} & 0 & 0 \\
    0 & z^{\alpha} e^{-n \lambda_2(z)} & 0 \\
    0 & 0 & e^{-n \lambda_3(z)}
    \end{pmatrix} e^{\frac{2nz}{3t(1-t)}},
\end{equation} 
where $E_n$ is an analytic prefactor, $f(z)$ is a conformal map defined
in a neighborhood of $z=0$ and $\tau(z)$ is analytic in $z$.
Assuming that $f$ maps the contour $\Sigma_T \cap D(0,r)$ to the six
rays $\Sigma_{\Phi}$ such that $f(z)$ is positive for positive real $z$,
then it follows from the above construction  that $Q$ will satisfy
the required jump condition.

We are going to take $f(z)$ and $\tau(z)$ in such a way that
the exponential factors in \eqref{Qform2} are cancelled. That is,
we want analytic $f(z)$ and $\tau(z)$ such that
\begin{equation}\label{thetaup}
 \theta_k(f(z); \tau(z)) = \lambda_k(z) - \frac{2z}{3t(1-t)},\qquad k=1,2,3,
\end{equation}
    for $\Im z > 0$, while for $\Im z < 0$,
\begin{align}\label{thetalow1}
    \theta_1(f(z); \tau(z)) & = \lambda_2(z) - \frac{2z}{3t(1-t)} -
2\pi i \\
\label{thetalow2}
    \theta_2(f(z); \tau(z)) & = \lambda_1(z) - \frac{2z}{3t(1-t)} + 2\pi i \\
\label{thetalow3}
    \theta_3(f(z); \tau(z)) & = \lambda_3(z) - \frac{2z}{3t(1-t)},
    \end{align}
where the functions $\theta_k$ were defined in \eqref{thetak}.

To define $f(z)$ and $\tau(z)$ we use the functions $f_3(z;t)$ and $g_3(z;t)$
from Lemma \ref{lambdaat0}. 

\begin{definition}
	We put
\begin{equation}\label{defftau}
	f (z) = f (z;t)=z[f_{3} (z;t)]^{3/2},\qquad 
	\tau (z) = \tau (z;t)=\frac{g_{3} (z;t)}{{f_{3} (z;t)}^{1/2}},
\end{equation}
where as usual we take the principal branches of the fractional powers. 
We write $f(z;t)$ and $g (z;t)$ in order to emphasize their dependence on the parameter $t$.
\end{definition}

\begin{lemma}\label{behfandtau} 
There exist $r_0 >0$ and $\delta>0$ such that for each
$t\in(t^*-\delta,t^{*}+\delta)$ we have that $z \mapsto f (z;t)$ is a
conformal mapping on the disk $D (0,r_0)$ and $z \mapsto \tau (z;t)$ is analytic
on $D (0,r_0)$. The map $z \mapsto f (z;t)$ is positive for positive real $z$
and negative for negative real z.

In addition, we have 
\begin{equation}\label{behavtau}
\tau (z;t)=\O (t-t^{*})+\O (z)\qquad \text{as }t\to t^{*}\text{ and } z\to 0.
\end{equation}
\end{lemma}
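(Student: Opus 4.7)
The plan is to read off everything from the boundary values $f_3(0;t)$ and $g_3(0;t)$ given in \eqref{f3g3at0}, combined with the observation that the parameters $c=c(t)$ and $p=p(t)$ from \eqref{defcp} are real-analytic in $t$ on a neighborhood of $t^*$, with $c(t^*)=c^*$ and $p(t^*)=0$. At $t=t^*$ this immediately yields the base values
\[
f_3(0;t^*)=(c^*)^{-2/3}>0,\qquad g_3(0;t^*)=0,
\]
which are the two facts driving the whole lemma.

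The first step is to produce a single bidisk on which the definitions make sense. The functions $f_3$ and $g_3$ descend from the branches $w_k$ of the algebraic equation \eqref{eqmodif} via Lemmas \ref{wat0}, \ref{zetaat0} and \ref{lambdaat0}; their construction depends continuously on the parameters $(c,p)$, and hence on $t$. A standard continuity argument then selects $\delta>0$ and $r_0>0$ such that $f_3(z;t)$ stays in a small complex neighborhood of $(c^*)^{-2/3}$ for $(z,t)\in D(0,r_0)\times(t^*-\delta,t^*+\delta)$; in particular $\Re f_3(z;t)$ is bounded below by a positive constant there. Therefore the principal branch $[f_3(z;t)]^{3/2}$ is analytic in $z$ on $D(0,r_0)$, and the same goes for $[f_3(z;t)]^{1/2}$, so $f(z;t)=z[f_3(z;t)]^{3/2}$ and $\tau(z;t)=g_3(z;t)/[f_3(z;t)]^{1/2}$ are analytic in $z$ on $D(0,r_0)$. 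Since $f'(0;t)=[f_3(0;t)]^{3/2}$ is bounded away from zero uniformly in $t$, the map $f(\,\cdot\,;t)$ is conformal on $D(0,r_0)$ after possibly shrinking $r_0$ one more time. The real-axis sign is then immediate: Lemma \ref{lambdaat0} gives that $f_3(z;t)$ is real for real $z\in U_3$, and by the bidisk above it stays positive there, so $[f_3(z;t)]^{3/2}>0$ on $(-r_0,r_0)\cap\R$ and $f(z;t)$ inherits the sign of $z$.

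The estimate \eqref{behavtau} is the last step. From $p(t^*)=0$ and the smoothness of $p(t)$ near $t^*$ one gets $p(t)=\O(t-t^*)$. Substituting into \eqref{f3g3at0} yields $g_3(0;t)=3p(t)\,c(t)^{-4/3}=\O(t-t^*)$, hence
\[
\tau(0;t)=\frac{g_3(0;t)}{[f_3(0;t)]^{1/2}}=\O(t-t^*).
\]
Cauchy's estimate applied to $\tau(\,\cdot\,;t)$ on $D(0,r_0)$, with a bound on $|\tau|$ that is uniform in $t\in(t^*-\delta,t^*+\delta)$ by the previous step, then gives $\tau(z;t)-\tau(0;t)=\O(z)$ uniformly in $t$, and the two estimates combine to \eqref{behavtau}.

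The main obstacle is not a single deep step but the bookkeeping of uniformity in $t$: the positivity of $\Re f_3$, the lower bound on $|f'(0;t)|$, and the Cauchy bound on $\tau_z$ must all be realized on one common bidisk $D(0,r_0)\times(t^*-\delta,t^*+\delta)$. This is dispatched by a routine compactness argument once one notes that $f_3$ and $g_3$ depend jointly continuously on $(z,t)$, which in turn follows from the fact that they are built from the analytic function $w(z;t)$ of \eqref{eqmodif}, whose coefficients depend real-analytically on $t$.
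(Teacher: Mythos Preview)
Your proof is correct and follows essentially the same approach as the paper: both arguments read off the conformality and analyticity from $f_3(0;t)>0$ via \eqref{f3g3at0}, and both derive \eqref{behavtau} from $p(t^*)=0$ together with the joint regularity of $f_3,g_3$ in $(z,t)$. The only cosmetic difference is that the paper splits $g_3(z;t)=g_3(z;t^*)+\O(t-t^*)$ and then $g_3(z;t^*)=\O(z)$, whereas you split $\tau(z;t)=\tau(0;t)+[\tau(z;t)-\tau(0;t)]$ and invoke Cauchy's estimate for the second piece; your version is slightly more explicit about the uniformity in $t$, but the substance is the same.
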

\begin{proof} 
Because of \eqref{f3g3at0} we have that $f_3(0) > 0$ and so 
$f(z)$ defined by \eqref{defftau} is indeed a conformal map in
a neigborhood of $z=0$ which is positive for positive values of $z$.
Also $\tau(z)$ is analytic in a neighborhood of $z=0$.

We have
\begin{equation}\label{f3g3t*}
f_{3} (z;t)=f_{3} (z,t^{*})+\O (t-t^{*}),\quad 
g_{3} (z;t)=g_{3} (z,t^{*})+\O (t-t^{*}),\quad \text{as }t\to t^{*},
\end{equation}
uniformly for $z$ in a neighborhood of $0$, and
\begin{equation}\label{expf3g3at0}
f_{3} (z;t^{*})= (c^{*})^{-2/3}+\O (z),\qquad g_{3}
(z;t^{*})=\O (z)\quad 
\text{as }z\to 0.
\end{equation}
This follows from the definitions \eqref{defcp} of $c$ and $p$,
equation \eqref{eqxy}, and the definitions of $f_{j}$ and $g_{j}$,
$j=1,2,3$. Expansions \eqref{expf3g3at0} also use \eqref{f3g3at0}
and the fact that $p=0$ 
when $t=t^{*}$. Then \eqref{behavtau} is a consequence of the previous expansions
and the definitions of $f (z)$ and $\tau (z)$ in \eqref{defftau}.
\end{proof}

\subsection{Definition and properties of the prefactor $E_n(z)$}

The prefactor $E_n(z)$ in the definition of $Q(z)$ in
\eqref{Qform2}  should be analytic and chosen so that
$Q$ is close to  $N_{\alpha}$ on $|z|=n^{-1/2}$. In view of the expansion
of $\Phi$ given in Lemma \ref{lemma:asymptPhi}, we set the following
definition. We use $r_0 > 0$ as given by Lemma~\ref{behfandtau}
and we assume $t \in (t^*-\delta, t^*+\delta)$.

\begin{definition} \label{def:E}
We define for $z \in D(0,r_0) \setminus \mathbb R$,
\begin{equation} \label{defE}
	E_n(z) =  - i \sqrt{3}  N_{\alpha}(z)
    \begin{pmatrix} 1 & 0 & 0 \\ 0 & z^{-\alpha} & 0 \\ 0 & 0 & 1 \end{pmatrix} L_\alpha ^{-1}(n^{3/2}f(z)), 
    \end{equation}
where $L_\alpha$ has been introduced in \eqref{defL}, and $N_\alpha$
is described in Section \ref{sec:N}. 
\end{definition}

\begin{lemma} \label{Eanalytic}
$E_n$ and $E_n^{-1}$ have an analytic continuation to $D(0,r_0)$.
\end{lemma}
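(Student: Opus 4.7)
The plan is to establish analyticity of $E_n$ on $D(0,r_0)$ by checking that it has no jumps across the real axis in this disk and that the isolated singularity at $z=0$ is removable; analyticity of $E_n^{-1}$ will then follow from a direct determinant computation. The potential sources of singular behaviour are the arcs $(0,q)\cap D(0,r_0)$ and $(-\infty,0)\cap D(0,r_0)$, and the point $z=0$ itself.

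For the jump cancellations, I would compute the jumps of the three factors $N_\alpha(z)$, $\diag(1,z^{-\alpha},1)$, and $L_\alpha^{-1}(n^{3/2}f(z))$ on each arc and verify that they conspire to the identity. On $(0,q)$, the jump of $N_\alpha$ is \eqref{eq:Njump1}, while $L_\alpha^{-1}(n^{3/2}f(z))$ (with $f(x)>0$ by Lemma \ref{behfandtau}) has an apparent jump coming from the switch between the UHP and LHP formulas in \eqref{defL}; this jump equals $(M^-)^{-1}M^+$, which in turn equals the $\Phi_\alpha$-jump matrix $\bigl(\begin{smallmatrix}0&1&0\\-1&0&0\\0&0&1\end{smallmatrix}\bigr)$ on the positive real axis, as can be read off by comparing Lemma \ref{lemma:asymptPhi} with the jump relation \eqref{Phijumps}. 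With the branch of $z^{-\alpha}$ fixed consistently with the convention used in \eqref{eq:Njump1}, the $\diag(1,z^{-\alpha},1)$ factor absorbs the $x^{\pm\alpha}$ from the $N_\alpha$-jump and leaves precisely the constant matrix above, which cancels with the $L_\alpha^{-1}$-jump. The same scheme on $(-\infty,0)$ uses \eqref{eq:Njump2} together with both the branch cut of the fractional powers in \eqref{defL} on the negative real axis and the UHP/LHP formula switch, and matches against $J_{\Phi_\alpha}$ on the negative real axis.

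For the removable singularity at $z=0$, I would use \eqref{tildeNalpha} to rewrite
\begin{equation*}
E_n(z)=-i\sqrt{3}\,z^{-\alpha/3}\,\widetilde N_\alpha(z)\,L_\alpha^{-1}\!\bigl(n^{3/2}f(z)\bigr).
\end{equation*}
By \eqref{Nalphaat0} one has $\widetilde N_\alpha(z)=M_\alpha^{\pm}z^{-1/3}+\O(1)$ with $M_\alpha^{\pm}$ of rank one, while the first column of $L_\alpha^{-1}(w)$ behaves like $w^{(\alpha-1)/3}$ as $w\to 0$ by the explicit form \eqref{defL}. Since $w=n^{3/2}f(z)=n^{3/2}z f_3(z;t)^{3/2}$ and $f_3(0;t^*)>0$, the potentially worst term in $E_n(z)$ scales like $z^{-2/3}$, the next worst like $z^{-1/3}$. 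The rank-one structure of $M_\alpha^{\pm}$ makes cancellation possible: writing $M_\alpha^{\pm}(M^{\pm})^{-1}=\mathbf v\mathbf u^{T}$, the singular contributions vanish precisely when the row $\mathbf u^{T}$ has zero first and second components, equivalently when $\mathbf u^{T}$ is a scalar multiple of the third row of $M^{\pm}$. The algebraic identity that enforces this alignment is exactly Lemma \ref{lemma:K}, whose content \eqref{relationK} is in turn dictated by the inverse symmetry \eqref{N0transpose}; once it is used, $E_n$ extends boundedly and hence, by Riemann's removable-singularity theorem, analytically across $z=0$.

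For the analyticity of $E_n^{-1}$, by Lemma \ref{lemma:localN} we have $\det N_\alpha\equiv 1$; a direct row expansion of the matrices in \eqref{defL} gives $\det M^{+}=\det M^{-}=3i\sqrt{3}$, hence $\det L_\alpha(w)=3i\sqrt{3}\,w^{-\alpha}$; together with $\det\diag(1,z^{-\alpha},1)=z^{-\alpha}$ and $f(z)/z=f_3(z;t)^{3/2}$, this yields
\begin{equation*}
\det E_n(z)=(-i\sqrt{3})^{3}\cdot z^{-\alpha}\cdot\frac{(n^{3/2}f(z))^{\alpha}}{3i\sqrt{3}}=n^{3\alpha/2}\,f_3(z;t)^{3\alpha/2},
\end{equation*}
which by Lemma \ref{lambdaat0} and continuity of $f_3$ in $t$ is analytic and nowhere zero on $D(0,r_0)$ for $r_0$ small and $t$ close to $t^*$. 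The hard part is the rank-one cancellation at $z=0$: the jump matching reduces to a bookkeeping exercise once the branch conventions are fixed, and the determinant is mechanical, but ensuring that the would-be $z^{-2/3}$ and $z^{-1/3}$ singularities vanish requires the specific algebraic relation between $K_1,K_2,K_3$ given by Lemma \ref{lemma:K}, reflecting the fact that the global parametrix $N_\alpha$ has been tailored to line up with the local behaviour of $L_\alpha^{-1}$.
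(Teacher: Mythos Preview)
Your jump-cancellation step and your determinant computation match the paper's argument closely. The issue is with the removable-singularity step at $z=0$, where you overcomplicate matters and invoke machinery that is not needed.

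Once you have established (from the jump cancellation) that $E_n$ is single-valued and analytic in the punctured disk $D(0,r_0)\setminus\{0\}$, and once you have the estimate $E_n(z)=\O(z^{-2/3})$ as $z\to 0$ (which you yourself derive from $\widetilde N_\alpha(z)=\O(z^{-1/3})$ and $w^{-\alpha/3}L_\alpha^{-1}(w)=\O(w^{-1/3})$), you are already done: an analytic function in a punctured disk that grows strictly slower than $|z|^{-1}$ cannot have a pole, and hence has a removable singularity. This is exactly what the paper does. There is no need to show that the apparent $z^{-2/3}$ and $z^{-1/3}$ terms in the fractional-power expansion vanish; their vanishing is a \emph{consequence} of removability, not a prerequisite for it.

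In particular, Lemma~\ref{lemma:K} and the rank-one structure of $M_\alpha^{\pm}$ play no role at this point of the argument. (They are genuinely needed elsewhere---for instance in establishing $\det N_\alpha\equiv 1$ and in the delicate estimates of Lemma~\ref{lem-E-Mstar-R}---but not here.) Your statement that ``the hard part is the rank-one cancellation at $z=0$'' is therefore misplaced: the removable-singularity step is in fact the easiest part of the lemma, requiring only the crude bound $\O(z^{-2/3})$ combined with single-valuedness.
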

\begin{proof}
Taking into account \eqref{eq:Njump1}--\eqref{eq:Njump2} we see that for 
$\widetilde N_\alpha$ defined in \eqref{tildeNalpha},
\[ \widetilde{N}_{\alpha,+}(x) = \widetilde{N}_{\alpha,-}(x) \begin{pmatrix} 0 & 1 & 0 \\ -
    1 & 0 & 0 \\ 0 & 0 & 1 \end{pmatrix}
    \qquad \text{for } x \in (0, r_0) \]
and
\[ \widetilde{N}_{\alpha,+}(x) = \widetilde{N}_{\alpha,-}(x)
    \begin{pmatrix} 1 & 0 & 0 \\ 0 & 0 & - e^{-\alpha \pi i}
    \\ 0 & e^{-\alpha \pi i} & 0 \end{pmatrix},
    \qquad \text{for } x \in (-r_0, 0). \]

For $L_\alpha$ we find the same jump matrices. Indeed, for $x > 0$, we
 have by the definition of $L_\alpha$,
{\allowdisplaybreaks
\begin{align*} L_{\alpha,-}(x)^{-1} L_{\alpha,+}(x)
     & = \begin{pmatrix} e^{\alpha \pi i/3} & 0 & 0 \\ 0 & e^{-\alpha
    \pi i/3} & 0 \\ 0 & 0 & 1 \end{pmatrix}
    \begin{pmatrix} \omega^2 & -\omega & 1 \\
    1 & -1 & 1 \\ \omega & -\omega^2 & 1 \end{pmatrix}^{-1} \\
    & \qquad \qquad \times
    \begin{pmatrix} \omega & \omega^2 & 1 \\
    1 & 1 & 1 \\ \omega^2 & \omega & 1 \end{pmatrix}
    \begin{pmatrix} e^{\alpha \pi i/3} & 0 & 0 \\ 0 & e^{-\alpha
    \pi i/3} & 0 \\ 0 & 0 & 1 \end{pmatrix} \\
    & = \begin{pmatrix} e^{\alpha \pi i/3} & 0 & 0 \\ 0 & e^{-\alpha
    \pi i/3} & 0 \\ 0 & 0 & 1 \end{pmatrix}
    \begin{pmatrix} 0 & 1 & 0 \\
    -1 & 0 & 0 \\ 0 & 0 & 1 \end{pmatrix}
    \begin{pmatrix} e^{\alpha \pi i/3} & 0 & 0 \\ 0 & e^{-\alpha
    \pi i/3} & 0 \\ 0 & 0 & 1 \end{pmatrix} \\
    & =  \begin{pmatrix} 0 & 1 & 0 \\
    -1 & 0 & 0 \\ 0 & 0 & 1 \end{pmatrix}
    \end{align*}
}
and for $x < 0$,
{\allowdisplaybreaks
\begin{align*} L_{\alpha,-}(x)^{-1} L_{\alpha,+}(x)
     & = e^{-2\alpha \pi i/3} \begin{pmatrix} e^{\alpha \pi i/3} & 0 & 0 \\ 0 & e^{-\alpha
    \pi i/3} & 0 \\ 0 & 0 & 1 \end{pmatrix}
    \begin{pmatrix} \omega^2 & -\omega & 1 \\
    1 & -1 & 1 \\ \omega & -\omega^2 & 1 \end{pmatrix}^{-1} \\
    & \qquad \times
    \begin{pmatrix} \omega & 0 & 0 \\ 0 & 1 & 0 \\ 0 & 0 & \omega^2 \end{pmatrix}
    \begin{pmatrix} \omega & \omega^2 & 1 \\
    1 & 1 & 1 \\ \omega^2 & \omega & 1 \end{pmatrix}
    \begin{pmatrix} e^{\alpha \pi i/3} & 0 & 0 \\ 0 & e^{-\alpha
    \pi i/3} & 0 \\ 0 & 0 & 1 \end{pmatrix} \\
    & = e^{-2\alpha \pi i/3} \begin{pmatrix} e^{\alpha \pi i/3} & 0 & 0 \\ 0 & e^{-\alpha
    \pi i/3} & 0 \\ 0 & 0 & 1 \end{pmatrix}
    \begin{pmatrix} 1 & 0 & 0 \\ 0 & 0 & -1 \\ 0 & 1 & 0
    \end{pmatrix}
    \begin{pmatrix} e^{\alpha \pi i/3} & 0 & 0 \\ 0 & e^{-\alpha
    \pi i/3} & 0 \\ 0 & 0 & 1 \end{pmatrix} \\
    & = \begin{pmatrix} 1 & 0 & 0 \\ 0 & 0 & - e^{-\alpha \pi i} \\
    0 & e^{-\alpha \pi i} & 0 \end{pmatrix}.
\end{align*}
}
Thus, the jumps for $\widetilde{N}_{\alpha}$ and $L_{\alpha}$ are the same. Since
$f$ is a conformal map on $D(0,r_0)$ with $f(x) > 0$ for $x
\in (0, r_0)$ and $f(x) < 0$ for $x < 0$, it follows that
$\widetilde{N}_{\alpha}(z) L_{\alpha}^{-1}(n^{3/2}f(z))$ has an analytic
continuation to $D(0,r_0) \setminus \{0\}$. As a result, we conclude that $E_n$ has an
analytic continuation to $D(0,r_0) \setminus \{0\}$.

We show that the isolated singularity at the origin is removable. Indeed, by Lemma~\ref{lemma:localN} we have
\[  \widetilde{N}_{\alpha}(z) = \O(z^{-1/3}) \qquad
\text{as } z \to 0, \] and by the definition of $L_{\alpha}$
\[ z^{-\alpha/3} L_{\alpha}^{-1}(z) = \O(z^{-1/3}) \qquad \text{as } z
    \to 0. \]
Thus
\begin{align*} z^{-\alpha/3} \widetilde{N}_{\alpha}(z) L_{\alpha}^{-1}(n^{3/2}f(z))
    & = \left(\frac{n^{3/2} f(z)}{z} \right)^{\alpha/3}  \widetilde{N}_{\alpha}(z) 
    \left(n^{3/2} f(z)^{-\alpha/3}
 L_{\alpha}^{-1}(n^{3/2} f(z)) \right) \\
    & = \O(z^{-2/3}) \qquad \text{as } z \to 0.
    \end{align*}
It follows that the singularity of the left hand side at $z=0$ is removable and thus 
$E_n(z)$ is analytic in $D(0,r_0)$. 

Recall that by Lemma~\ref{lemma:localN} we have that 
$\det N_{\alpha} = \det \widetilde{N}_{\alpha} \equiv 1$. From \eqref{defL} we get that
\[ \det L_{\alpha}(z) = 3 i \sqrt{3} \, z^{-\alpha}. \]
Thus by \eqref{defE} 
\[ \det E_n(z) = \left( \frac{n^{3/2} f(z)}{z} \right)^{\alpha} \]
which is analytic and non-zero in a neighborhood of $z=0$. Thus $E_n^{-1}(z)$ is analytic in the
neighborhood $D(0,r_0)$ as well.

This completes the proof of the lemma.
\end{proof}

Having defined $f(z)$, $\tau(z)$ and $E_n(z)$ we then define the local parametrix $Q$ 
as in formula \eqref{Qform2}.

\section{Fourth transformation of the RH problem}

In the next transformation we define
\begin{equation}\label{defS}
	S (z)=\begin{cases}
	T (z)P (z)^{-1}, &  \text{for }z\in D(q^*,r_q),\\
	T (z)Q (z)^{-1}, & \text{for }z\in D(0,n^{-1/2}),\\
T (z)N_{\alpha}^{-1} (z), &  \text{elsewhere},
\end{cases}
\end{equation}
where we use  the matrix-valued functions $N_{\alpha}$ from \eqref{Nalpha},
$P$ constructed in Section \ref{sec:parametrixQ} in the fixed neighborhood $D(q^{*},r)$ 
of $q^{*}$, and $Q$ given by \eqref{Qform2} 
in the shrinking neighborhood $D(0,n^{-1/2})$ of the origin.

\begin{figure}[t]
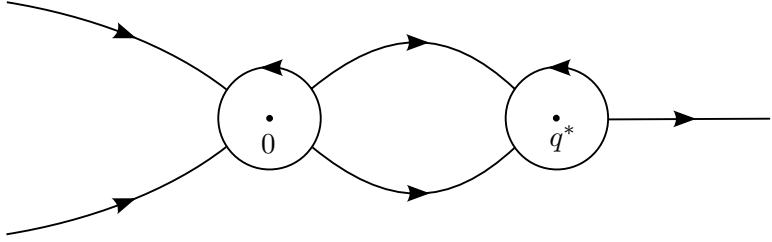

\centering 
\begin{overpic}[scale=0.9]%
{finalcontours}
  \put(33.5,11){$0 $}
  \put(71,12){$q^*  $}
    \end{overpic}
\caption{Jump contour $\Sigma_{S}$ for the RH problem for $S$. The
disk around $0$ is shrinking with radius $n^{-1/2}$ as $n\to\infty$.
The contour $\Sigma_S$ is also the contour for the RH problem for $R$.
}
\label{contourS}
\end{figure}

By construction, $S(z)$ is piece-wise analytic and has jumps across the contour $\Sigma_S$
shown in Figure~\ref{contourS}, with a possible isolated singularities at $0$ and $q^*$.
The singularity at $q^*$ is removable which follows from the properties of the Airy parametrix.
We now check that the origin is not a singularity of $S(z)$.

\begin{lemma}
The singularity of $S$ at $z=0$ is removable.
\end{lemma}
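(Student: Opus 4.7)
The plan is as follows. By construction in Section~\ref{sec:parametrixQ}, the local parametrix $Q$ is designed so that $Q_+ = Q_- J_T$ on every arc of $\Sigma_T \cap D(0, n^{-1/2})$; in other words, $Q$ carries exactly the same jumps as $T$ in this disk. Consequently $TQ^{-1}$ has no jumps across any arc of $\Sigma_T$ inside $D(0, n^{-1/2})$ and is therefore analytic on the punctured disk $D(0, n^{-1/2})\setminus\{0\}$. To show that the isolated singularity at $z=0$ is removable, I will invoke Riemann's theorem on removable singularities applied entry by entry, so the remaining task is to verify that every matrix entry of $TQ^{-1}$ stays bounded as $z\to 0$.

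For this I will start from the factorization \eqref{Qform2}. The outer factor $E_n$ is analytic on $D(0,r_0)$ by Lemma~\ref{Eanalytic}, and the scalar exponential $e^{2nz/(3t(1-t))}$ in \eqref{Qform2} is clearly bounded near the origin, so neither influences the singularity analysis. What matters is the contribution of $\Phi_\alpha^{-1}(n^{3/2}f(z); n^{1/2}\tau(z))$ together with the diagonal factor coming from $\Lambda_n(z)^{-1}$, which is where the $z^{\pm\alpha}$ (and logarithmic, when $\alpha=0$) singular entries appear. The behavior of $T$ near $z=0$ is given sector by sector in item 4 of RH problem~\ref{rhpforT}, and the behavior of $\Phi_\alpha$ near the origin of its argument is specified in \eqref{Phiat01}--\eqref{Phiat03}; combining the latter with the determinantal identity $\det\Phi_\alpha(w)=w^{-\alpha}$ provides the corresponding bounds for $\Phi_\alpha^{-1}$.

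The final step is a sector-by-sector comparison of the singular factors of $T(z)$ with those of $Q(z)^{-1}$, for the three regimes $-1<\alpha<0$, $\alpha=0$, and $\alpha>0$, and for each of the three regions near the origin (outside both lenses, inside the lens around $\Delta_1$, inside the lens around $\Delta_2$). Because the conformal map $f$ of Lemma~\ref{behfandtau} is real on the real axis, positive on $[0,r_0)$, and negative on $(-r_0,0)$, it sends each sector of $\Sigma_T\cap D(0,n^{-1/2})$ onto the matching sector of $\Sigma_\Phi$, so the singular factors of $T$ and of $Q^{-1}$ appear in the same entries and cancel exactly. I expect the main obstacle to be the sheer volume of bookkeeping required in this case analysis (particularly delicate in the borderline case $\alpha=0$ with its logarithmic factors); no new idea beyond what is already present in the analogous local-parametrix analysis of \cite[Section 8]{BK4} should be needed. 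Once entry-wise boundedness has been verified, Riemann's theorem concludes the proof.
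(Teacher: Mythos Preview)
Your proposal is correct and follows essentially the same approach as the paper: both arguments observe that $S=TQ^{-1}$ has no jumps in the punctured disk (because $Q$ was built to share the jumps of $T$), and then establish boundedness near the origin by combining the sector-wise behavior of $T$ from item~4 of RH problem~\ref{rhpforT} with the corresponding behavior of $\Phi_\alpha^{-1}$ obtained from \eqref{Phiat01}--\eqref{Phiat03} together with $\det\Phi_\alpha=z^{-\alpha}$, the analyticity of $E_n$ (Lemma~\ref{Eanalytic}), and the boundedness of the $\lambda_j$. The paper writes out only one case ($\alpha>0$, upper half-plane, outside both lenses) and then asserts that ``the other regions can be treated in a similar way'', whereas you propose to carry out the full case analysis; the underlying mechanism is the same.

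One caution about your phrase ``the singular factors of $T$ and of $Q^{-1}$ appear in the same entries and cancel exactly'': this is morally right but does not hold at the level of naive entry-wise $\O$-bounds. For instance, inside the lens around $\Delta_1$ with $\alpha>0$, the first column of $T$ is $\O(|z|^{-\alpha})$ while the second and third \emph{rows} of $Q^{-1}$ carry the $\O(|z|^{-\alpha})$ factor, so straightforward multiplication of these bounds gives $\O(|z|^{-\alpha})$ for every entry of $TQ^{-1}$. The cancellation is genuine but is visible only through the finer algebraic structure (the lens-opening factor in $T$ matches the jump of $\Phi_\alpha$ across the ray $\arg w=\pi/4$), or, more efficiently, by arguing as the paper implicitly does: once you know $S$ is analytic on the punctured disk and bounded in the sector outside both lenses (where both $T$ and $Q^{-1}$ are separately bounded), the at-most-power growth in the remaining sectors rules out an essential singularity, and a pole is excluded by the boundedness on a sector.
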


\begin{proof}
We give the proof for the case $\alpha > 0$.

Consider $z \to 0$ with $\Im z > 0$ and outside the lenses around $\Delta_{2}$ and $\Delta_{1}$. 
By the RH problem \ref{rhpforT} for $T$, we have that $T$ remains bounded there.
We  show that $Q^{-1}$ remains bounded as well. 

By \eqref{Qform2}, we have
\begin{multline*} 
	Q^{-1}(z) = 
	e^{-\frac{2nz}{3t(1-t)}}
	\begin{pmatrix} e^{n \lambda_1(z)} & 0 & 0 \\ 0 & e^{n \lambda_2(z)} & 0 \\ 0 & 0  & e^{n \lambda_3(z)}
	\end{pmatrix}  \\
	\begin{pmatrix} 1 & 0 & 0 \\ 0 & z^{-\alpha} & 0 \\ 0 & 0 & 1 \end{pmatrix}
	  \Phi_{\alpha}(n^{3/2} f(z); n^{1/2} \tau(z))^{-1}
     E_n(z)^{-1} 
    \end{multline*}
By Lemma~\ref{Eanalytic} we know that $E_n(z)^{-1}$ is analytic and thus bounded as $z \to 0$. Also the 
functions $\lambda_j$ are bounded as $z \to 0$. Also
\[
	\begin{pmatrix} 1 & 0 & 0 \\ 0 & z^{-\alpha} & 0 \\ 0 & 0 & 1 \end{pmatrix}
	  \Phi_{\alpha}(n^{3/2} f(z); n^{1/2} \tau(z))^{-1}
\]
is bounded as $z \to 0$ in the region under consideration because
of the condition 4. in the RH problem for $\Phi_{\alpha}$, see \eqref{Phiat02}
and the fact that $\det \Phi_{\alpha} = z^{-\alpha}$, see \eqref{detPhi}.

Wee conclude that $Q^{-1}$ remains bounded as $z \to 0$ in the region in the upper half-plane
outside of the lenses.

The other regions can be treated in a similar way and the lemma follows.
\end{proof}

We find the following RH problem for $S$
\begin{rhproblem} \label{rhpforS}
\begin{enumerate}
\item $S$ is analytic outside the contour $\Sigma_{S}$ shown in  Figure~\ref{contourS}.
\item On $\Sigma_S$ there is a jump relation
\begin{equation} \label{jumpsS} 
	S_+(z) = S_-(z) J_S(z) 
	\end{equation}
with jump matrix $J_S(z)$ given by
\begin{align}
	J_S(z) & = N_{\alpha}(z) P^{-1}(z), && \text{for $|z- q^*| = r_q$}, \label{JS1} \\
	J_S(z) & = N_{\alpha}(z) Q^{-1}(z), && \text{for $|z| = r_0 = n^{-1/2}$}, \label{JS2} \\
	J_S(z) & = N_{\alpha}(z) J_T(z) N_{\alpha}^{-1}(z), && \text{elsewhere on $\Sigma_S$} \label{JS3} 
\end{align}
\item $S(z) = I + \O(z^{-1})$ as $z \to \infty$.
\end{enumerate}
\end{rhproblem}

Recall now that we are interested in the limit \eqref{doublescaling} where $n \to \infty$,
$t \to t^*$ and $c^* \tau = n^{1/2} (t^* - t)$ remains fixed.
The jump matrices $J_S$ in \eqref{JS1}--\eqref{JS3} depend on $n$ and $t$, and we would
like that they tend to the identity matrix in the double scaling limit \eqref{doublescaling}. 
This turns out to be the case for the jump matrices \eqref{JS1} and \eqref{JS3}. However, this is not
the case for \eqref{JS2} as will be shown later on.

We start with the good jumps.
\begin{lemma} \label{lem:JumpsS}
In the limit \eqref{doublescaling} we have 
\begin{equation} \label{JSgood1}
	J_{S} (z) = I+\O \left(n^{-1} \right)\quad \text{ uniformly for } |z-q^*| = r_q,
\end{equation}
and for some $c>0$ (depending on $\alpha$), 
\begin{equation}\label{JSgood3}
	J_{S} (z) = I+\O \left( \frac{e^{-cn^{2/3}}}{1+|z|}  \right)\quad
 	\text{uniformly for $z \in \Sigma_{S}$ outside of the two circles.}
\end{equation}
\end{lemma}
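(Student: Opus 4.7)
The first bound, on the fixed circle $|z-q^*| = r_q$, will follow immediately from the matching condition \eqref{matchP} built into the Airy parametrix: since $q = q^* + O(n^{-1/2})$ lies inside $D(q^*, r_q)$ for $n$ large, we have $P(z) = N_\alpha(z)(I + O(n^{-1}))$ uniformly on the circle, whence $J_S(z) = N_\alpha(z) P^{-1}(z) = I + O(n^{-1})$ by \eqref{JS1}. So the plan here is simply to invoke the construction of $P$; nothing in the shrinking geometry at the origin is involved.

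For \eqref{JSgood3} my plan is to work with the formula $J_S = N_\alpha J_T N_\alpha^{-1}$ from \eqref{JS3} separately on each piece of $\Sigma_S$ lying outside the two disks, namely $\Delta_1^\pm$, $\Delta_2^\pm$, and $(q,+\infty)$. On all of these the jump matrix $J_T$ from \eqref{JT2}, \eqref{JT4}, \eqref{JT5} is of the form $I + (\text{rank-one term})$ whose prefactor is $z^{\pm\alpha} e^{n(\lambda_i - \lambda_j)(z)}$ with a pair $(i,j)$ for which $\Re(\lambda_i-\lambda_j)(z)<0$ by Lemma~\ref{lambda-sign}. The whole task reduces to obtaining a uniform quantitative lower bound $\Re(\lambda_j-\lambda_i)(z) \ge c\,n^{-1/3}$ on the portion of $\Sigma_S$ outside the shrinking disk $|z|=n^{-1/2}$, together with boundedness of $N_\alpha^{\pm 1}$ there.

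The delicate step, and the main obstacle in the plan, is the small loop around the origin where Lemma~\ref{lambda-sign} fails to give the right sign when $t \neq t^*$. To handle it I will rely on the remark following Lemma~\ref{lambda-sign}: in the double scaling regime $|t-t^*| = O(n^{-1/2})$, and the diameter of the loop is of the same order as $|t-t^*|^{1/2}$, so for $n$ large the entire loop is contained in the shrinking disk $D(0,n^{-1/2})$ where we use the local parametrix $Q$ in place of $N_\alpha$. Consequently, outside the two disks only the ``good'' sign of $\Re(\lambda_i - \lambda_j)$ occurs. To size the resulting exponential I will use the local expansion \eqref{explambdaat0}: for $|z|=n^{-1/2}$ the relevant differences behave as $O(z^{2/3})+O(z^{1/3}) = O(n^{-1/3})$, and the negative real part dominates the $\tau$-induced correction because \eqref{behavtau} gives $\tau(z) = O(n^{-1/2})$. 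Thus $n\,\Re(\lambda_i-\lambda_j) \le -c n^{2/3}$ on the boundary of the shrinking disk, and this bound only improves as $|z|$ increases (on $(q,+\infty)$ it becomes $-c n \sqrt{|z|}$ from \eqref{zeta}).

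It then remains to conjugate by $N_\alpha$. Away from the two disks the global parametrix $N_\alpha$ and its inverse are uniformly bounded up to the polynomial growth encoded in \eqref{Ninfty}; combined with the polynomial factors $z^{\pm\alpha}$ in $J_T$ this produces at worst algebraic growth in $|z|$, which is absorbed harmlessly by the super-polynomial exponential decay. Extracting one power of $1+|z|$ from that decay yields precisely the $e^{-cn^{2/3}}/(1+|z|)$ bound of \eqref{JSgood3}, perhaps after shrinking $c$ by a harmless amount. The verification amounts to a case-by-case inspection of \eqref{JT2}--\eqref{JT5} which I will collect in a short lemma-style computation.
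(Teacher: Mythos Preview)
Your strategy matches the paper's: invoke the matching condition \eqref{matchP} on $|z-q^*|=r_q$, and on the remaining pieces write $J_S=N_\alpha J_T N_\alpha^{-1}$ and feed in a quantitative lower bound $\Re(\lambda_j-\lambda_i)\ge c\,n^{-1/3}$ on the annulus $n^{-1/2}\le|z|\le 1$ coming from Lemma~\ref{lambdaat0}. Two steps in your execution are wrong, however, and both need repair.

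First, your claim that the anomalous loop has diameter of order $|t-t^*|^{1/2}$ is self-defeating: with $|t-t^*|=O(n^{-1/2})$ that would give diameter $O(n^{-1/4})$, which is \emph{larger} than the radius $n^{-1/2}$ of the shrinking disk, so your own conclusion would fail. In fact the extra zero of $\zeta_1-\zeta_2$ (or $\zeta_2-\zeta_3$) sits at $|z|$ of order $(t-t^*)^3\sim n^{-3/2}$: balance $z^{-1/3}f_2$ against $z^{-2/3}g_2$ in Lemma~\ref{zetaat0}, using $g_2(0)\sim p=O(t-t^*)$. The paper does not argue via loop containment at all; it shows directly from \eqref{explambdaat0} and \eqref{f3g3t*} that $\Re(\lambda_j-\lambda_i)(z;t)\ge c_1|z|^{2/3}-c_2|z|^{1/3}|t-t^*|\ge c_3 n^{-1/3}$ throughout $n^{-1/2}\le|z|\le1$, which is the cleaner route.

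Second, the assertion that $N_\alpha^{\pm1}$ are bounded outside the two disks is false: $N_\alpha$ is genuinely singular at $0$ by \eqref{Nalphaat0}, and for $|z|$ just above $n^{-1/2}$ its entries blow up. The paper handles this by absorbing the $z^{\pm\alpha}$ from $J_T$ into the conjugation, e.g.\ on $\Delta_2^\pm$,
\[
z^{-\alpha}\,N_\alpha E_{23}N_\alpha^{-1}=\widetilde N_\alpha E_{23}\widetilde N_\alpha^{-1},
\]
with $\widetilde N_\alpha$ from \eqref{tildeNalpha}, and then invoking Lemma~\ref{lemma:localN}: both $\widetilde N_\alpha(z)$ and $\widetilde N_\alpha^{-1}(z)$ are $O(|z|^{-1/3})$, so the conjugation costs at most $O(|z|^{-2/3})=O(n^{1/3})$, which is harmlessly absorbed by $e^{-cn^{2/3}}$. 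You need either this device or, at minimum, the observation that the blow-up of $N_\alpha^{\pm1}$ near $0$ is only polynomial in $1/|z|$ (hence polynomial in $n$) to close the estimate.
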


\begin{proof}
The behavior \eqref{JSgood1} follows from \eqref{JS1} and the matching condition
\eqref{matchP} for $P$.

In view of Lemma~\ref{lambda-sign} and the asymptotic behavior of the $\lambda$ functions
we find that for some $c > 0$,
\begin{align} 
	\Re ( \lambda_3- \lambda_2)(z) & \geq c |z|^{1/2},  &&  z \in \Delta_2^{\pm} \setminus D(0,1), \label{boundLambda1} \\
	\Re (\lambda_1 - \lambda_2)(z) & \geq c,  && z \in \Delta_1^{\pm} \setminus (D(0, 1) \cup D(q^*, r_q)) \label{boundLambda2} \\
  \Re (\lambda_2 - \lambda_1)(z) & \geq c |z|, && z \in (q^* +r_q, \infty). \label{boundLambda3} 
\end{align}
According to  \eqref{JT2}, \eqref{JS3} and \eqref{boundLambda1}, for $z\in \Delta_2^{\pm}\setminus D(0,1)$,
$$
J_S(z)-I =  e^{\pm \alpha \pi i} z^{-\alpha}e^{n (\lambda_{2}-\lambda_{3})(z)} N_{\alpha}(z) E_{23} N_{\alpha}^{-1}(z)=\O\left(|z|^{-\alpha } e^{- c n |z|^{1/2}}\right)
$$
for some $c > 0$. 

Analogous considerations on the lips $\Delta_1^{\pm} \setminus (D(0, 1) \cup D(q^*, r_q))$ and on $(q^* +r_q, \infty)$, appealing to  formulas  \eqref{JT4}--\eqref{JT5} and \eqref{boundLambda2}--\eqref{boundLambda3}, show that there exists some $c > 0$, such that
\[ J_S(z) = I + \O\left(|z|^{|\alpha|} e^{- c n |z|^{1/2} } \right),   \qquad z \in \Sigma_S \setminus ( D(0,1) \cup D(q^*, r_q)). \]
What remains is to estimate $J_S(z)$ on the lips of the lenses near $0$ for $n^{-1/2} < |z| < 1$.

For $t=t^{*}$, we obtain from Lemma~\ref{lambdaat0} that there exists a constant $c_{1}>0$ such 
that (recall that $p=0$ when $t=t^{*}$)
\begin{align*} 
	\Re ( \lambda_3- \lambda_2)(z;t^*) & \geq c_1 |z|^{2/3},  && z \in \Delta_2^{\pm} \cap D(0,1), \\
	\Re (\lambda_1 - \lambda_2)(z;t^*) & \geq c_1 |z|^{2/3},  && z \in \Delta_1^{\pm} \cap D(0,1),  
\end{align*}
Moreover, \eqref{explambdaat0} and \eqref{f3g3t*} imply that $\lambda_{j}(z,t)=\lambda_{j} (z;t^{*})+z^{1/3}\, \O (t-t^{*})$ as $t\to
t^{*}$.  Thus,
\begin{align*} 
	\Re ( \lambda_3- \lambda_2)(z;t) & \geq c_1 |z|^{2/3} - c_2 |z|^{1/3} |t-t^*|,  && z \in \Delta_2^{\pm} \cap D(0,1), \\
	\Re (\lambda_1 - \lambda_2)(z;t) & \geq c_1 |z|^{2/3} - c_2 |z|^{1/3} |t-t^*|,  && z \in \Delta_1^{\pm} \cap D(0,1),  
\end{align*}
for some $c_{2}>0$. Since $t-t^{*} = \O(n^{-1/2})$ we conclude that
\begin{align*} 
	\Re ( \lambda_3- \lambda_2)(z;t) & \geq c_3 n^{-1/3},  && z \in \Delta_2^{\pm} \cap D(0,1), \, n^{-1/2} < |z| < 1, \\
	\Re (\lambda_1 - \lambda_2)(z;t) & \geq c_1 n^{-1/3},  && z \in \Delta_1^{\pm} \cap D(0,1), \, n^{-1/2} < |z| < 1,
\end{align*}
for some positive constant $c_{3}>0$ and $n$ large enough. 

Now, for $z\in \Delta_2^{\pm}\cap D(0,1)$, $|z|>n^{-1/2}$, and using \eqref{tildeNalpha}, we get
$$
J_S(z)-I =  e^{\pm \alpha \pi i} e^{n (\lambda_{2}-\lambda_{3})(z)} \widetilde N_{\alpha}(z) E_{23} \widetilde  N_{\alpha}^{-1}(z).
$$
By Lemma~\ref{lemma:localN}, $\widetilde N_{\alpha}(z) = \O \left( |z|^{-1/3} \right)$,  $ \widetilde N_{\alpha}^{-1}(z)=\O \left( |z|^{-1/3} \right)$ as $|z|\to 0$, so that
$$
J_S(z)-I =  \O\left(e^{- cn^{2/3}} \right)
$$
for some $c>0$. Analogous conclusion is obtained on $\Delta_1^{\pm} \cap D(0,1)$, $n^{-1/2} < |z| < 1$.

Gathering both estimates and replacing them by a weaker uniform bound we obtain  \eqref{JSgood3}.
\end{proof}

We next analyze the jump matrix \eqref{JS2} on $|z| = n^{-1/2}$
again in the double scaling limit \eqref{doublescaling}

We have $J_S = N_{\alpha} Q^{-1}$ where $N_{\alpha}$ is given by \eqref{Nalpha}
and $Q$ is given by \eqref{Qform2}. All notions that appear in
these formulas depend on $t$ or $n$ (or both). For example, $N_{\alpha}$ depends on
$t$ since the endpoint $q$ is varying with $t$, and tends to $q^*$ as $t \to t^*$.
Indeed, $q = q^* + \O(t-t^*)$.
Also the matrix $C_{\alpha}$ from \eqref{Nalpha} and the constants $K_1$, $K_2$, $K_3$
from \eqref{eq:F1F2F3} depend on $t$ and tend
to limiting values corresponding to the value $t^*$ at the same rate of $\O(t-t^*) = \O(n^{-1/2})$.
We denote the limiting values with $^*$:
\begin{equation} \label{CalphaKjstar} 
	C_{\alpha} = C_{\alpha}^* + \O(t-t^*), \qquad  K_j = K_j^* + \O(t-t^*), \quad j =1,2,3, 
	 \end{equation}
and these quantities appear in the formula \eqref{lemjumpS0} below.

\begin{proposition} \label{prop:JS}
In the limit \eqref{doublescaling} we have that
\begin{equation} \label{lemjumpS0}
  J_S(z) = I - \frac{h_n(z;t)}{ z} {\mathcal M}_{\alpha}^* 
  			+ \O(n^{-1/6}) 
  			\end{equation}
uniformly for $|z| = n^{-1/2}$, where
\begin{equation} \label{Malphastar}
	 \mathcal M_{\alpha}^* = C_{\alpha}^* \begin{pmatrix} K_1^* \\ K_2^* \\ K_3^* \end{pmatrix}
  		\begin{pmatrix} K_1^* & K_2^* & K_3^* \end{pmatrix} 
  		\begin{pmatrix} 1 & 0 & 0 \\ 0 & 0 & -i \\ 0 & -i & 0 \end{pmatrix}
  		\left(C_{\alpha}^*\right)^{-1}
\end{equation} and
\begin{equation}
\label{def:hn}
h_n(z;t)=\tau(z;t)\, \frac{n \tau(z;t)^2 + 9 \alpha}{9 c^* }.
\end{equation}
\end{proposition}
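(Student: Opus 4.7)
The plan is to substitute the asymptotic expansion of $\Phi_\alpha$ from Lemma~\ref{lemma:asymptPhi} into the definition \eqref{Qform2} of $Q$, exploit the cancellations built into the constructions of $f$, $\tau$, and $E_n$, and then extract the leading $O(1)$ contribution from the conjugation by $\widetilde N_\alpha$. On $|z|=n^{-1/2}$ the argument $w=n^{3/2}f(z)$ satisfies $|w|\asymp n\to\infty$, so Lemma~\ref{lemma:asymptPhi} applies with $\sigma=n^{1/2}\tau(z)=\O(1)$ by \eqref{behavtau}. By the defining relations \eqref{thetaup}--\eqref{thetalow3} and the homogeneity of $\theta_k$, one checks that $\theta_k(w;\sigma)=n\lambda_k(z)-2nz/(3t(1-t))$ (up to the $\pm 2\pi i$ permutations in \eqref{thetalow1}--\eqref{thetalow2} for $\Im z<0$), so that the diagonal exponential from the expansion of $\Phi_\alpha$ cancels the diagonal factor in \eqref{Qform2}. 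The prefactor $(i/\sqrt 3)L_\alpha(w)$ is absorbed by $E_n$ via \eqref{defE}, yielding
\begin{equation*}
Q(z)=N_\alpha(z)\begin{pmatrix} 1 & 0 & 0 \\ 0 & z^{-\alpha} & 0 \\ 0 & 0 & 1 \end{pmatrix}\left(I+\frac{M_\alpha^\pm(\sigma)}{w^{1/3}}+\O(w^{-2/3})\right)\begin{pmatrix} 1 & 0 & 0 \\ 0 & z^{\alpha} & 0 \\ 0 & 0 & 1 \end{pmatrix},\qquad \pm\Im z>0.
\end{equation*}

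Inverting and rewriting with $\widetilde N_\alpha$ from \eqref{tildeNalpha}, I obtain
\begin{equation*}
J_S-I=-\widetilde N_\alpha(z)\,\frac{M_\alpha^\pm(n^{1/2}\tau(z))}{n^{1/2}f(z)^{1/3}}\,\widetilde N_\alpha^{-1}(z)+\widetilde N_\alpha(z)\,\O(w^{-2/3})\,\widetilde N_\alpha^{-1}(z).
\end{equation*}
By Lemma~\ref{lemma:localN}, both $\widetilde N_\alpha$ and $\widetilde N_\alpha^{-1}$ are $\O(n^{1/6})$ on $|z|=n^{-1/2}$, so the trailing term is $\O(n^{-1/3})$. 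For the main term I expand $\widetilde N_\alpha(z)=A_\pm z^{-1/3}+\O(1)$ and $\widetilde N_\alpha^{-1}(z)=B_\pm z^{-1/3}+\O(1)$ at the origin, where $A_\pm$ and $B_\pm$ are rank-one matrices read off from \eqref{asymptoticsN0at0} together with the local factorization $\diag(e^{\alpha G_j(z)})\diag(1,z^{-\alpha},1)=z^{-\alpha/3}\diag(d_j(z))$ with $d_j$ analytic at $0$. Lemma~\ref{lemma:inverseN_0} pins down the corresponding leading coefficient of $\widetilde N_\alpha^{-1}$. Because $\widetilde N_\alpha\widetilde N_\alpha^{-1}=I$, we have $A_\pm B_\pm=0$, but the triple product factors as
\begin{equation*}
A_\pm\,M_\alpha^\pm(\sigma)\,B_\pm=c^{-4/3}\bigl(r\,M_\alpha^\pm(\sigma)\,u'\bigr)\,C_\alpha \begin{pmatrix} K_1 \\ K_2 \\ K_3 \end{pmatrix}\begin{pmatrix} K_1 & K_2 & K_3 \end{pmatrix}\begin{pmatrix} 1 & 0 & 0 \\ 0 & 0 & -i \\ 0 & -i & 0 \end{pmatrix}C_\alpha^{-1},
\end{equation*}
where $r$ and $u'$ are row/column vectors encoding the $\omega^{e_j}d_j(0)$ data (with $e_j=3-j$); in the limit $t\to t^*$ the outer rank-one matrix becomes exactly $\mathcal M_\alpha^*$.

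To evaluate the scalar $rM_\alpha^\pm(\sigma)u'$, I split $\sigma(\sigma^2+9\alpha-9)=\sigma(\sigma^2+9\alpha)-9\sigma$ in the diagonal part of $M_\alpha^\pm$ (see \eqref{Mplus}--\eqref{Mminus}); using $\omega^3=1$ the diagonal contribution sums to $\sigma(\sigma^2+9\alpha-9)/9$ independently of the $d_j(0)$, and the off-diagonal part contributes precisely $+\sigma$ via the identity $\sum_{j\neq k}\omega^{e_j+e_k}(d_j/d_k)(DMD^{-1})_{jk}=-3i\sqrt 3$, where $D=\diag(\omega^{-\alpha/2},\omega^{\alpha/2},1)$. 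The total scalar is therefore $\sigma(\sigma^2+9\alpha)/9=n^{1/2}\tau(n\tau^2+9\alpha)/9$. Combined with $w^{1/3}=n^{1/2}z^{1/3}f_3(z;t)^{1/2}$ and $f_3(0;t^*)=(c^*)^{-2/3}$, an elementary manipulation yields the main term $-h_n(z;t)\mathcal M_\alpha^*/z$ uniformly on the circle.

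\textbf{Main obstacle.} The non-routine step is the algebraic identity for the off-diagonal sum, which must hold for all $\alpha>-1$ and requires an explicit evaluation of $d_j(0)$ (through the branches of $r_j(w_j(0))$ and the relations $\prod_j(1-cw_j)=c/z$, $(1-cw_j)^2=1/(zw_j)$) together with the $\omega^{\pm\alpha/2}$-conjugation by $D$. Once this identity is verified for both $M_\alpha^+$ and $M_\alpha^-$, ensuring that the upper and lower half-plane computations produce the same $\mathcal M_\alpha^*$, the remaining sub-leading contributions, where the $z^{-1/3}$-singular part of one of $\widetilde N_\alpha$, $\widetilde N_\alpha^{-1}$ pairs with the $\O(1)$ correction of the other, yield $\O(1/(n^{1/2}z^{2/3}))=\O(n^{-1/6})$ on the circle. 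This dominates both the $\O(n^{-1/3})$ error from $w^{-2/3}$ and the $\O(n^{-1/2})$ error from freezing the $t$-dependent coefficients $C_\alpha,K_j,f_3(0;t)$ at their $t=t^*$ values, and the proposition follows.
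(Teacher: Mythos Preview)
Your proposal is correct and follows essentially the same route as the paper's proof: substitute the expansion of Lemma~\ref{lemma:asymptPhi} into \eqref{Qform2}, use \eqref{thetaup}--\eqref{thetalow3} and \eqref{defE} to cancel the exponentials and the $L_\alpha$-factor, then conjugate $M_\alpha^\pm$ by $\widetilde N_\alpha$ and extract the $z^{-2/3}$ coefficient; the error bookkeeping and the final scalar $\sigma(\sigma^2+9\alpha)/9$ match exactly.

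The only point where the paper organises things differently is your ``main obstacle''. Rather than evaluating the $d_j(0)$ individually and verifying an identity in $\omega^{\pm\alpha/2}$, the paper absorbs the $D$-conjugation and the $d_j$ into the single functions $\xi_1=\omega^{-1/2}e^{G_1}$, $\xi_2=\omega^{1/2}z^{-1}e^{G_2}$, $\xi_3=e^{G_3}$, and then observes (from Lemma~\ref{wat0} and \eqref{G1G2G3def}--\eqref{r123def}) the remarkable fact that $\xi_j(z)/\xi_k(z)=1+\O(z^{1/3})$ as $z\to 0$. This makes the $\alpha$-dependent conjugation disappear at leading order, so the off-diagonal contribution reduces to the bare row--column product
\[
\begin{pmatrix}\omega^2&\omega&1\end{pmatrix}
\begin{pmatrix}0&-\omega&1\\ \omega^2&0&-1\\ -\omega^2&\omega&0\end{pmatrix}
\begin{pmatrix}\omega^2\\ \omega\\ 1\end{pmatrix}=-3\sqrt3\,i,
\]
and no separate check for $M_\alpha^-$ is needed. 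This is equivalent to your computation but avoids tracking the $d_j(0)$ and the $D$-twist by hand.
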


\begin{proof}
The local parametrix $Q$ from \eqref{Qform2} depends on both $t$ and $n$.
The functions $\lambda_j$ come from the Riemann surface and therefore depend
on $t$, but only in a mild way. Of more importance is the dependence of 
the functions $f(z) = f(z;t)$ and $\tau(z) = \tau(z;t)$ on $t$, see Lemma~\ref{behfandtau}.

From \eqref{behavtau} we have that 
\begin{equation} \label{taubounded}
	 n^{1/2} \tau(z;t) = \O(1),   
	 \end{equation}
in the double scaling limit \eqref{doublescaling} uniformly for $|z| = n^{-1/2}$.
[This is in fact the reason why we need the shrinking disk of radius $n^{-1/2}$.]

We also have that $n^{1/2} f(z;t)$ remains bounded as $|z| = n^{-1/2}$,
However, $n^{3/2} f(z;t)$ is growing in absolute value and is of order $n$
uniformly for $|z| = n^{-1/2}$. Therefore we can apply the asymptotic formulas
from Lemma~\ref{lemma:asymptPhi} to $\Phi_{\alpha}\left(n^{3/2} f(z); n^{1/2} \tau(z) \right)$
and we find for  $\Im z > 0$,
\begin{multline} \label{Phidoublescaling}
	\Phi_{\alpha}\left(n^{3/2} f(z); n^{1/2} \tau(z) \right) \\
	=  \frac{i}{\sqrt{3}} L_{\alpha}(n^{3/2} f(z))
		\left( I + \frac{M_{\alpha}^+(n^{1/2} \tau(z))}{n^{1/2} f(z)^{1/3}} + \O(n^{-2/3})\right) \\
		\times
			\begin{pmatrix} e^{\theta_1(n^{3/2} f(z); n^{1/2} \tau(z))} & 0 & 0 \\ 
			0 &   e^{\theta_2(n^{3/2} f(z); n^{1/2} \tau(z))} & 0 \\ 0 & 0 & e^{\theta_3(n^{3/2} f(z);n^{1/2} \tau(z))}
	\end{pmatrix} 
	\end{multline}
uniformly for $|z| = n^{-1/2}$, where we recall that $\theta_k(z; \tau)$, $k=1,2,3$, also depend on $\tau$,
see \eqref{thetak}.
By \eqref{thetak} and \eqref{thetaup} we actually have
\[ \theta_k\left(n^{3/2} f(z); n^{1/2} \tau(z)\right) = n \theta_k(f(z); \tau(z)) 
= n \lambda_k(z) - \frac{2nz}{3t(1-t)} \]
for $k=1,2,3$, and $\Im z > 0$, by our choice of $f(z)$ and $\tau(z)$.

Thus by \eqref{Qform2} and \eqref{doublescaling} we have
\[ Q(z) = E_n(z) \frac{i}{\sqrt{3}} L_{\alpha}(n^{3/2} f(z))
		\left( I + \frac{M_{\alpha}^+(n^{1/2} \tau(z))}{n^{1/2} f(z)^{1/3}} + \O(n^{-2/3})\right)\begin{pmatrix} 1 & 0 & 0 \\
    0 & z^{\alpha} & 0 \\ 0 & 0 & 1  \end{pmatrix},
		 \]
and then by inserting the definition \eqref{defE} of $E_n(z)$, we obtain
\begin{equation}
\label{eq:asymptQinfty}
Q(z) = N_{\alpha}(z) \begin{pmatrix} 1 & 0 & 0 \\
    0 & z^{-\alpha} & 0 \\ 0 & 0 & 1  \end{pmatrix} 
    \left(I + \frac{M_{\alpha}^+(n^{1/2}\tau(z))}{n^{1/2} f(z)^{1/3}}+ \O(n^{-2/3})\right) \begin{pmatrix} 1 & 0 & 0 \\
    0 & z^{\alpha} & 0 \\ 0 & 0 & 1  \end{pmatrix},
\end{equation}
uniformly for $|z| = n^{-1/2}$ with $\Im z > 0$. For $|z| = n^{-1/2}$ with $\Im z < 0$ we obtain
the same formula \eqref{eq:asymptQinfty} but with $M_{\alpha}^+$ replaced by $M_{\alpha}^-$.

Then for $|z| = n^{-1/2}$,
\begin{equation} \label{JSasymptotics1} 
J_S^{-1}(z) = Q(z) N_{\alpha}^{-1}(z) = 
 I + \widetilde N_{\alpha}(z) \frac{M_{\alpha}^{\pm}(n^{1/2}\tau(z))}{n^{1/2} f(z)^{1/3}} 
 	\widetilde N_{\alpha}^{-1}(z) + \O(n^{-1/3})
 \end{equation}
where we recall the definition \eqref{tildeNalpha} of $\widetilde N_{\alpha}$. The entries
of $\widetilde N_{\alpha}(z)$ and its inverse are of order $|z|^{-1/3} = n^{1/6}$ 
by \eqref{asymptotics0}. Therefore the  error term has gone up from $\O(n^{-2/3})$ in 
\eqref{eq:asymptQinfty} to $\O(n^{-1/3})$ in \eqref{JSasymptotics1}.

In the evaluation of $\widetilde N_{\alpha}(z) M_{\alpha}^{\pm}(n^{1/2}\tau(z)) 
 	\widetilde N_{\alpha}^{-1}(z)$ we encounter the following matrix
\begin{equation} \label{Malpha}
	\mathcal M_{\alpha} = C_{\alpha}  \begin{pmatrix}
K_1 \\ K_2 \\ K_3
\end{pmatrix} \begin{pmatrix} K_1 & K_2 & K_3 \end{pmatrix}  \begin{pmatrix}
1 & 0 & 0 \\
0 & 0 & -i \\
0 & -i & 0
\end{pmatrix} C_\alpha^{-1}
\end{equation}
which is a $3 \times 3$ rank one matrix depending on $t$, but not on $z$.
The matrix $\mathcal M_{\alpha}$ is in fact nilpotent,
\begin{equation} \label{Malpha2}
	\mathcal M_{\alpha}^2 = 0,
	\end{equation}
 which follows from \eqref{Malpha} and the property \eqref{relationK} of the numbers $K_j$.
From \eqref{CalphaKjstar} and \eqref{Malphastar} we also find that
\begin{equation} \label{Malpha3} 
	\mathcal M_{\alpha} = \mathcal M_{\alpha}^* + \O(t-t^*) \qquad \textrm{ as } t \to t^*. 
	\end{equation}

We first prove the following  lemma. 	

\begin{lemma}
We have for $\Im z > 0$
\begin{equation} \label{quant1}
\widetilde N_\alpha(z)  \begin{pmatrix} \omega^2 & 0 & 0 \\ 
0 & \omega & 0 \\
0 & 0 & 1
\end{pmatrix}
\widetilde N_\alpha^{-1}(z) 
 = 3 c^{-4/3} \mathcal M_\alpha  z^{-2/3} + \O\left( z^{-1/3 }\right)
\end{equation}
and
\begin{multline}\label{quant2}
\widetilde N_\alpha(z) \begin{pmatrix} \omega^{-\alpha/2} & 0 & 0 \\ 
0 & \omega^{\alpha/2} & 0 \\
0 & 0 & 1
\end{pmatrix}\begin{pmatrix}  0 & -\omega & 1 \\ 
\omega^2 &  0 & -1 \\
-\omega^2 & \omega & 0
\end{pmatrix} \begin{pmatrix} \omega^{\alpha/2} & 0 & 0 \\ 
0 & \omega^{-\alpha/2} & 0 \\
0 & 0 & 1
\end{pmatrix}
\widetilde N_\alpha^{-1}(z) \\
= -3 \sqrt{3} i c^{-4/3} \mathcal M_\alpha z^{-2/3} + \O\left( z^{-1/3 }\right)
\end{multline}
as $z \to 0$.
\end{lemma}

\begin{proof}
We obtain \eqref{quant1} from \eqref{Nalpha}, \eqref{asymptoticsN0at0}, Lemma~\ref{lemma:inverseN_0},
and the fact that 
\[
	\begin{pmatrix} \omega^2 &  \omega & 1 \end{pmatrix} \begin{pmatrix} \omega^2 & 0 & 0 \\ 
0 & \omega & 0 \\
0 & 0 & 1
\end{pmatrix} \begin{pmatrix} \omega^2  \\ 
  \omega   \\
  1
\end{pmatrix}= 3.
\]

From \eqref{Nalpha}, we obtain that the left-hand side of \eqref{quant2} is equal to
\begin{equation*}
C_\alpha N_0(z) \begin{pmatrix} \xi_1^{\alpha}(z) & 0 & 0 \\ 
0 & \xi_2^{\alpha}(z) & 0 \\
0 & 0 & \xi_3^{\alpha}(z)
\end{pmatrix}\begin{pmatrix}  0 & -\omega & 1 \\ 
\omega^2 &  0 & -1 \\
-\omega^2 & \omega & 0
\end{pmatrix} \begin{pmatrix} \xi_1^{-\alpha}(z) & 0 & 0 \\ 
0 & \xi_2^{-\alpha}(z) & 0 \\
0 & 0 & \xi_3^{-\alpha}(z)
\end{pmatrix}
 N_0^{-1}(z)C_\alpha^{-1},
\end{equation*}
with functions
\[
\xi_1(z) = \omega^{-1/2} e^{G_1(z)}, \quad \xi_2(z)=\omega^{1/2} z^{-1} e^{G_2(z)}, \quad \xi_3(z)= e^{G_3(z)},
\]
with $G_j(z) = r_j(w_j(z))$ defined in \eqref{G1G2G3def}--\eqref{r123def}. Using these
expressions and Lemma~\ref{wat0}, we find the remarkable fact that 
\[
\frac{\xi_j(z)}{\xi_k(z)} =1 + \O\left( z^{1/3}\right), \quad z\to 0.
\]

Then, using \eqref{asymptoticsN0at0} and the fact that 
\[ \begin{pmatrix} \omega^2 & \omega & 1 \end{pmatrix} 
	\begin{pmatrix}  0 & -\omega & 1 \\ 
 \omega^2 &  0 & -1 \\
 -\omega^2 & \omega & 0
\end{pmatrix} \begin{pmatrix} \omega^2  \\ 
  \omega   \\
  1
\end{pmatrix}= -3 \sqrt{3}i,
\]
we obtain \eqref{quant2} 
\end{proof}

We continue with the proof of Proposition \ref{prop:JS}.
 
From the lemma and the formula \eqref{Mplus} for $M_{\alpha}^+$, we obtain
\begin{multline} \widetilde N_{\alpha}(z) M_{\alpha}^{+}(n^{1/2}\tau(z)) 
 	\widetilde N_{\alpha}^{-1}(z) \\
 	= \left(3 \frac{n^{1/2} \tau(z) \left( n \tau^2(z) + 9 \alpha - 9\right)}{27}
 		 -3 \sqrt{3} i \frac{i n^{1/2} \tau(z)}{3 \sqrt{3}} \right) 
 		 c^{-4/3} \mathcal M_\alpha z^{-2/3} + \O\left( z^{-1/3 }\right) \\
 		 = n^{1/2} \frac{\tau(z)(n \tau^2(z) + 9\alpha)}{9 c^{4/3}}
 		 \mathcal M_\alpha z^{-2/3} + \O\left( z^{-1/3 }\right)
\end{multline}

Inserting this into \eqref{JSasymptotics1} we get for $|z| = n^{-1/2}$ and $\Im z > 0$
\begin{equation} \label{JSasymptotics2} 
J_S^{-1}(z) =
 I + \frac{\tau(z)(n \tau^2(z) + 9\alpha)}{9 c^{4/3} f(z)^{1/3}} \mathcal M_{\alpha} z^{-2/3} 
 	+ \frac{\O(z^{-1/3})}{n^{1/2} f(z)^{1/3}}  + \O(n^{-1/3})
 \end{equation}
A similar analysis for $\Im z < 0$ will show that the same formula \eqref{JSasymptotics2}
also holds for $\Im z< 0$.

Note that for $|z| = n^{-1/2}$ we have  $\frac{\O(z^{-1/3})}{n^{1/2} f(z)^{1/3}} = \O(n^{-1/6})$,
where we use that $f(z)$ is a conformal map. In fact, by \eqref{f3g3at0} and \eqref{defftau} 
\[ f(z;t) = f'(0;t) z + \O(z^2) =
	\left((c^*)^{-1} + \O(t-t^*)  \right)z + \O(z^2) \]
which implies that in the double scaling limit \eqref{doublescaling}
\[ f(z;t) = (c^*)^{-1} z + \O(n^{-1}) \]
for $|z|= n^{-1/2}$. 	 Using also \eqref{Malpha2}
we then obtain from \eqref{JSasymptotics2} that
\begin{equation} \label{JSasymptotics3} 
J_S(z) = I - \frac{\tau(z)(n \tau^2(z) + 9\alpha)}{9 c^* z^{1/3}} \mathcal M_{\alpha} z^{-2/3} 
 	  + \O(n^{-1/6}), \qquad |z| = n^{-1/2}
 \end{equation}
which implies \eqref{lemjumpS0} in view of \eqref{Malpha3}. This completes the proof of the Proposition \ref{prop:JS}.
 \end{proof}

Now recall that in the double scaling limit \eqref{doublescaling}
we have that $n^{1/2} \tau(z;t)$ remains bounded for $|z| = n^{-1/2}$, see 
also \eqref{taubounded}. Then also
\[  \frac{\tau(z;t)(n \tau^2(z;t) +9\alpha)}{c^* z} = \O(1) \]
and it follows that the term $ \frac{h_n(z;t)}{ z}  \mathcal M_{\alpha}^* $ in \eqref{lemjumpS0}
remains bounded for $|z| = n^{-1/2}$, but does not tend to $0$ as $n \to \infty$.
Therefore the jump matrix $J_S$ on $|z| = n^{-1/2}$ does not tend to the identity
matrix as $n \to \infty$.

\section{Final transformation} \label{final}

We need one more transformation. What will help us in the final transformation
is the identity \eqref{Malpha2} for $\mathcal M_{\alpha}$, which also holds for
the limiting value, namely $\left(\mathcal M_{\alpha}^* \right)^2 = 0$.
The final transformation $S \mapsto R$ is similar to the one in \cite{DKZ}
and is defined as follows.

\begin{definition} \label{defR}
With the notation \eqref{def:hn} we define for $z \in \mathbb C \setminus \Sigma_S$,
\begin{align} \label{defR1}
		R(z)  & =  S(z) \left( I  - \frac{h_n(0;t)}{ z} \,  \mathcal M_\alpha^* \right), 
		&& \quad |z| > n^{-1/2}, \\
	 \label{defR2}
		R(z) & = 	S(z) \left( I  +   \frac{h_n(z;t)-h_n(0;t)}{ z} \, \mathcal M_\alpha^* \right), 
		&&  \quad |z| < n^{-1/2}.
	\end{align}
\end{definition}

Note that the transformation $S \mapsto R$ is a global transformation which modifies
$S$ in every part of the complex plane. Then $R$ satisfies the following
RH problem on the contour $\Sigma_R = \Sigma_S$, see Figure~\ref{contourS}.

\begin{rhproblem} \label{rhpforR}
\begin{enumerate}
\item $R$ is defined and analytic in $\mathbb C \setminus \Sigma_R$.
\item On $\Sigma_R$ we have the jump 
\begin{equation} \label{jumpR}
	R_+ = R_- J_R 
\end{equation}
with
\begin{equation} \label{JR1}
 J_R(z) =  \left( I  + \frac{h_n(0;t)}{ z} \, \mathcal M_\alpha^* \right)  
		J_S(z) \left( I  + \frac{h_n(z;t)-h_n(0;t)}{ z} \, \mathcal M_\alpha^* \right), 
		\end{equation}		
for $|z| = n^{-1/2}$, 
and
\begin{equation} \label{JR2}
	J_R(z) =  \left( I  + \frac{h_n(0;t)}{ z} \, \mathcal M_\alpha^* \right)  
		J_S(z) \left( I  - \frac{h_n(0;t)}{ z} \, \mathcal M_\alpha^* \right),
		\end{equation}
		elsewhere on $\Sigma_R$.
\item $R(z) = I + \O(1/z)$ as $z \to \infty$.
\end{enumerate}
\end{rhproblem}

All properties in the RH problem \ref{rhpforR} follow easily fom the RH problem \ref{rhpforS}
for $S$ and the definition \eqref{defR1}--\eqref{defR2}. For \eqref{JR1} and \eqref{JR2}
one also uses \eqref{Malpha2} and \eqref{Malpha3}  which imply that for every
constant $\gamma$,
\[ \left(I - \gamma \mathcal M_{\alpha}^* \right)^{-1} = I + \gamma M_{\alpha}^*. \]

Under the transformation $S \mapsto R$ the jumps on 
the part of $\Sigma_R$ outside of the circle $|z| = n^{-1/2}$
are not essentially affected. We have the same estimates as
in Lemma~\ref{lem:JumpsS}:
\begin{lemma} \label{lem:JumpsR1}
In the limit \eqref{doublescaling} we have 
\begin{equation} \label{JRgood1}
	J_R (z) = I+\O \left(n^{-1} \right)\quad \text{ uniformly for } |z-q^*| = r_q. 
\end{equation}
and
\begin{equation} \label{JRgood3}
	J_R (z) = I+\O \left(\frac{e^{-cn^{2/3}}}{1+|z|} \right)\quad
 	\text{uniformly for $z \in \Sigma_R$ outside of the two circles.}
\end{equation}
\end{lemma}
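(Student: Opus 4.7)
The strategy is to reduce the claim to Lemma~\ref{lem:JumpsS} by exploiting the nilpotency relation $(\mathcal M_{\alpha}^*)^2 = 0$ from \eqref{Malpha2}--\eqref{Malpha3}. On the parts of $\Sigma_R$ described in \eqref{JRgood1}--\eqref{JRgood3} the jump is given by \eqref{JR2}, that is, a two-sided conjugation of $J_S$ by factors of the form $I \pm (h_n(0;t)/z) \mathcal M_{\alpha}^*$. The plan is to show first that these conjugating factors are uniformly bounded and have trivial product, and then to transfer the estimates of Lemma~\ref{lem:JumpsS} through the conjugation without loss in their order.

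First I would estimate $h_n(0;t)$ in the double scaling limit \eqref{doublescaling}. By \eqref{behavtau}, we have $\tau(0;t) = \O(t-t^*) = \O(n^{-1/2})$, and therefore from the definition \eqref{def:hn},
\begin{equation*}
h_n(0;t) = \tau(0;t)\,\frac{n\tau(0;t)^2 + 9\alpha}{9c^*} = \O(n^{-1/2}).
\end{equation*}
Next I verify that $h_n(0;t)/z$ is uniformly bounded on the relevant portions of $\Sigma_R$. On the fixed circle $|z-q^*| = r_q$ this is trivial since $|z|$ is bounded away from zero, yielding $h_n(0;t)/z = \O(n^{-1/2})$. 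On the remaining parts of $\Sigma_R$ outside the two circles we have $|z| \geq c_0$ for some fixed $c_0 > 0$ (since these contours stay away from the shrinking disk $|z|\leq n^{-1/2}$ for large $n$), and hence $h_n(0;t)/z = \O(n^{-1/2})$ uniformly as well. In particular the matrices $I \pm (h_n(0;t)/z)\mathcal M_{\alpha}^*$ are uniformly bounded in the limit \eqref{doublescaling}.

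The key algebraic observation is that, because $(\mathcal M_{\alpha}^*)^2 = 0$,
\begin{equation*}
\left(I + \frac{h_n(0;t)}{z}\mathcal M_{\alpha}^*\right)\left(I - \frac{h_n(0;t)}{z}\mathcal M_{\alpha}^*\right) = I - \frac{h_n(0;t)^2}{z^2}(\mathcal M_{\alpha}^*)^2 = I.
\end{equation*}
Writing $J_S(z) = I + \Delta(z)$ where $\Delta(z)$ is the error furnished by Lemma~\ref{lem:JumpsS}, the identity \eqref{JR2} becomes
\begin{equation*}
J_R(z) = I + \left(I + \tfrac{h_n(0;t)}{z}\mathcal M_{\alpha}^*\right)\Delta(z)\left(I - \tfrac{h_n(0;t)}{z}\mathcal M_{\alpha}^*\right).
\end{equation*}
Since the two outer factors are uniformly $\O(1)$, the conjugation preserves the order of $\Delta(z)$. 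Substituting the estimate $\Delta(z) = \O(n^{-1})$ from \eqref{JSgood1} yields \eqref{JRgood1}, and substituting $\Delta(z) = \O(e^{-cn^{2/3}}/(1+|z|))$ from \eqref{JSgood3} yields \eqref{JRgood3}, completing the proof. There is no substantial obstacle: the whole point of introducing the transformation $S \mapsto R$ in Definition~\ref{defR} was precisely to make this conjugation trivially compatible with the previously established estimates, the nilpotency of $\mathcal M_{\alpha}^*$ doing all the work. The only mild verification required is boundedness of $h_n(0;t)/z$ on the relevant contour, which is immediate from \eqref{behavtau} and the geometry of $\Sigma_R$.
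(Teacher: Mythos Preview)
Your approach is essentially identical to the paper's: write $J_S = I + \Delta$, use $(\mathcal M_{\alpha}^*)^2 = 0$ so the two conjugating factors in \eqref{JR2} multiply to $I$, and conclude $J_R = I + (\text{bounded})\Delta(\text{bounded})$.

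One minor correction: your claim that $|z| \geq c_0$ for a fixed $c_0 > 0$ on $\Sigma_R$ outside the two circles is not quite right, since the lips $\Delta_1^{\pm}$, $\Delta_2^{\pm}$ connect to the shrinking circle $|z| = n^{-1/2}$ and hence contain points with $|z|$ of order $n^{-1/2}$. The correct lower bound is $|z| \geq n^{-1/2}$, which combined with $h_n(0;t) = \O(n^{-1/2})$ still gives $h_n(0;t)/z = \O(1)$ (rather than $\O(n^{-1/2})$ as you wrote). This does not affect the argument, since uniform boundedness of the conjugating factors is all that is needed.
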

\begin{proof}
By \eqref{JSgood1} and \eqref{JR2}, for $ |z-q^*| = r_q$,
\begin{align*}
J_R & =\left( I  + \frac{h_n(0;t)}{ z} \, \mathcal M_\alpha^* \right)  
		\left( I+\O \left(n^{-1} \right) \right) \left( I  - \frac{h_n(0;t)}{ z} \, \mathcal M_\alpha^* \right) \\
		& = I + \left( I  + \frac{h_n(0;t)}{ z} \, \mathcal M_\alpha^* \right)    \O \left(n^{-1}   \right) \left( I  - \frac{h_n(0;t)}{ z} \, \mathcal M_\alpha^* \right)\\
		& = I +  \O \left(n^{-1}   \right) .
\end{align*}
Analogous calculations yield \eqref{JRgood3}. In both cases the fact that $(\mathcal M_{\alpha}^*)^2 = 0$ is crucial.
\end{proof}

\begin{lemma} \label{lem:JumpsR2}
In the limit \eqref{doublescaling} we have 
\begin{equation} \label{JRgood2}
	J_R (z) = I+\O \left(n^{-1/6} \right)\quad \text{ uniformly for } |z| = n^{-1/2}. 
\end{equation}
\end{lemma}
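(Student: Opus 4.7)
The plan is to insert the asymptotic expansion of $J_S$ from Proposition~\ref{prop:JS} into the formula \eqref{JR1} defining $J_R$ on the circle $|z|=n^{-1/2}$, and to exploit the nilpotency $(\mathcal M_{\alpha}^*)^2=0$ to see that all the potentially $\O(1)$ contributions cancel exactly, leaving only the $\O(n^{-1/6})$ remainder.

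First I would collect the relevant size estimates on $|z|=n^{-1/2}$. From \eqref{behavtau} together with $t-t^*=\O(n^{-1/2})$ we get $\tau(z;t)=\O(n^{-1/2})$, hence $n\tau(z;t)^2=\O(1)$, and then by \eqref{def:hn}, $h_n(z;t)=\O(n^{-1/2})$. Consequently $h_n(0;t)/z = \O(1)$ on the circle, and since $h_n(\cdot;t)$ is analytic in a neighborhood of $0$ (as $\tau(\cdot;t)$ is), the quotient $(h_n(z;t)-h_n(0;t))/z$ extends analytically to $D(0,r_0)$ and is also $\O(1)$ on $|z|=n^{-1/2}$. Thus all three factors in \eqref{JR1} are uniformly bounded matrices.

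Next I would carry out the multiplication explicitly. Setting $a=h_n(0;t)/z$ and $b=h_n(z;t)/z$, formula \eqref{JR1} combined with \eqref{lemjumpS0} yields
\begin{equation*}
J_R(z) = (I+a\,\mathcal M_{\alpha}^*)\bigl(I - b\,\mathcal M_{\alpha}^* + \O(n^{-1/6})\bigr)(I+(b-a)\,\mathcal M_{\alpha}^*).
\end{equation*}
The identity $(\mathcal M_{\alpha}^*)^2=0$, which follows from \eqref{Malpha2}--\eqref{Malpha3}, makes every quadratic (and higher) power of $\mathcal M_{\alpha}^*$ vanish; a direct expansion then telescopes to
\begin{equation*}
(I+a\,\mathcal M_{\alpha}^*)(I-b\,\mathcal M_{\alpha}^*)(I+(b-a)\,\mathcal M_{\alpha}^*) = I.
\end{equation*}
Since the two factors flanking the $\O(n^{-1/6})$ remainder are uniformly bounded by the estimates above, the remainder survives the multiplication as $\O(n^{-1/6})$, and the claim \eqref{JRgood2} follows.

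The only substantive point is the role played by the nilpotency of $\mathcal M_{\alpha}^*$: without it the cross terms $ab\,(\mathcal M_{\alpha}^*)^2$ would contribute at order $1$ and there would be no decay in $n$. This is precisely the reason why the final transformation in Definition~\ref{defR} is built using the constant matrix $\mathcal M_{\alpha}^*$ (independent of $z$) rather than $\mathcal M_{\alpha}$, and why the two conjugating factors are chosen so that their $z$-dependent parts combine with the singular part of $J_S$ to give the identity up to higher order.
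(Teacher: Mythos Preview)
Your proof is correct and follows essentially the same approach as the paper: insert the expansion \eqref{lemjumpS0} of $J_S$ into \eqref{JR1}, use the nilpotency $(\mathcal M_{\alpha}^*)^2=0$ to collapse the exact product of the three $(I+\cdot\,\mathcal M_{\alpha}^*)$ factors to the identity, and observe that the uniformly bounded outer factors preserve the $\O(n^{-1/6})$ remainder. Your write-up is more explicit than the paper's one-line computation, but the argument is the same.
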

\begin{proof}
Using \eqref{JR1} and \eqref{JSasymptotics3} and the fact that $(\mathcal M_{\alpha}^*)^2 = 0$, direct calculation yields
$$
J_R (z) = I+  \left( I  + \frac{h_n(0;t)}{ z} \, \mathcal M_\alpha^* \right)  
		\O \left(n^{-1/6} \right) \left( I  + \frac{h_n(z;t)-h_n(0;t)}{ z} \, \mathcal M_\alpha^* \right),
$$
and \eqref{JRgood2} follows.
\end{proof}

As a result of the estimates on $J_R$ we may now conclude that
in the double scaling limit \eqref{doublescaling}
\begin{equation} \label{asympR}
	R(z) = I + \O \left( \frac{1}{n^{1/6}(1 + |z|)} \right) \qquad \text{as } n \to \infty 
\end{equation}
uniformly for $z \in \mathbb C \setminus \Sigma_R$.
See \cite[Appendix A]{BK4} for arguments that justify this, also in a situation
of varying contours.

\section{The limiting kernel}
\subsection{Expression for the critical kernel}\label{mainproof}
We start from \eqref{defK}, which gives the correlation kernel $K_{n}
(x,y;t)$ in terms of the solution of the RH problem for $Y$. Following
the transformation $Y \mapsto X \mapsto U \mapsto T$, we find that for $x,y>0$ and
$x,y\in (0,q)$, 
\begin{equation}\label{kernelanalysis}
K_{n}(x,y;t) = \frac{1}{2\pi i(x-y)}
    \begin{pmatrix} - e^{-n\lambda_{1,+}(y)} & y^{\alpha} e^{-n \lambda_{2,+}(y)} & 0 \end{pmatrix}
    T_+^{-1}(y) T_+(x) \begin{pmatrix} e^{n \lambda_{1,+}(x)} \\ x^{-\alpha} e^{n \lambda_{2,+}(x)} \\ 0 \end{pmatrix}.
\end{equation}

For $z$ inside the disk of radius $n^{-1/2}$,  we have by 
\eqref{defS}, \eqref{Qform2}, 
\eqref{defLambda2}, and \eqref{defR2}, that
\begin{equation}\label{expT}
\begin{aligned}
	T (z)&=S (z)Q (z)\\
	&=R (z) \left( I  - \ds \frac{\tau(z;t)(n\tau^2(z;t)+9\alpha)- \tau(0;t)(n \tau^2(0;t) + 9\alpha) }{9 c^*  z} \mathcal M_\alpha^* \right) \\
	& \qquad \qquad \times
	E_n (z)\Phi_{\alpha}(n^{3/2}f (z;t);n^{1/2}\tau (z;t)) \Lambda_n(z) e^{\frac{2nz}{3t (1-t)}}, 
\end{aligned}
\end{equation}
Thus, if $0<x,y<n^{-1/2}$, we get by plugging \eqref{expT} into
\eqref{kernelanalysis}, 
\begin{multline} \label{Kn}
K_{n}(x,y;t) = \frac{e^{\frac{2n (x-y)}{3t (1-t)}}}{2\pi i(x-y)}
    \begin{pmatrix} - 1 & 1 & 0 \end{pmatrix}
    \Phi_{\alpha,+}^{-1} (n^{3/2}f (y;t);n^{1/2}\tau (y;t))  \\
   	\times E_n^{-1}(y) \left( I  + \ds \frac{\tau(y;t)(n\tau^2(y;t)+9\alpha)- \tau(0;t)(n \tau^2(0;t) + 9\alpha) }{9 c^*  y} \mathcal M_\alpha^* \right) 
   	R^{-1} (y) \\
   	 \times 
    R (x) \left( I  -  \ds \frac{\tau(x;t)(n\tau^2(x;t)+9\alpha)- \tau(0;t)(n \tau^2(0;t) + 9\alpha) }{9 c^* x} \mathcal M_\alpha^* \right)	
    E_n(x) \\ \times
    \Phi_{\alpha,+}(n^{3/2}f (x;t);n^{1/2}\tau (x;t)) 
	\begin{pmatrix} 1 \\ 1 \\ 0 \end{pmatrix}
\end{multline}
which is an exact formula.

Now we take the double scaling limit $n \to \infty$, $t \to t^*$ such that
\begin{equation} \label{doublescaling2} 
	c^* \tau = n^{1/2}(t^* - t) \qquad \text{remains fixed.} 
	\end{equation}
We also replace $x$ and $y$ in \eqref{Kn} by
\begin{equation} \label{xnyn} 
	x_n = \frac{c^* x}{n^{3/2}}, \qquad y_n = \frac{c^* y}{n^{3/2}}
	\end{equation}
with $x, y > 0$ fixed. For $n$ large enough we then have that $x_n$ and $y_n$ are less than $n^{-1/2}$.
We study how the various factors in \eqref{Kn} behave in this limit.

\begin{lemma}\label{lem-f-tau}
Let $x, y >$ be fixed.  Then we have in the double scaling limit \eqref{doublescaling2} with
$x_n$ and $y_n$ given by \eqref{xnyn} 
\begin{equation}\label{limfxn}
	n^{3/2} f (x_{n};t) = x (1 + \O(n^{-1/2})), \qquad n^{3/2} f(y_n; t) = y(1 + \O(n^{-1/2})),
\end{equation}
and
\begin{equation}\label{limtauxn}
	n^{1/2}\tau (x_{n};t) = \tau + \O(n^{-1/2}), \qquad n^{1/2} \tau(y_n; t) = \tau + \O(n^{-1/2})
\end{equation}
as $n \to \infty$.
\end{lemma}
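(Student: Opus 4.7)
The proof is a direct Taylor expansion of the analytic functions $f_{3}(z;t)$ and $g_{3}(z;t)$ from Lemma~\ref{lambdaat0} around the base point $(z,t)=(0,t^{*})$, combined with the rescaling $x_{n}=c^{*}x/n^{3/2}$ and the identification $t-t^{*}=-c^{*}\tau/n^{1/2}$ dictated by the double scaling limit \eqref{doublescaling}. By \eqref{f3g3at0} and the fact that $p=0$ at $t=t^{*}$ (see \eqref{defcp}), the initial values are $f_{3}(0;t^{*})=(c^{*})^{-2/3}$ and $g_{3}(0;t^{*})=0$. Joint analyticity in $z$ and smoothness in $t$ (which follows from \eqref{f3g3t*} and the analytic dependence in Lemma~\ref{behfandtau}) give, uniformly for $x,y$ in compact subsets of $(0,\infty)$,
\[
	f_{3}(x_{n};t)=(c^{*})^{-2/3}+\O(n^{-1/2}),\qquad g_{3}(x_{n};t)=3p(t)(c(t))^{-4/3}+\O(n^{-3/2}).
\]

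For \eqref{limfxn} I would simply substitute into the definition $f(z;t)=z[f_{3}(z;t)]^{3/2}$: raising the expansion of $f_{3}$ to the power $3/2$ and multiplying by $n^{3/2}x_{n}=c^{*}x$ gives $n^{3/2}f(x_{n};t)=c^{*}x\,((c^{*})^{-1}+\O(n^{-1/2}))=x(1+\O(n^{-1/2}))$, and the argument for $y_{n}$ is identical. For \eqref{limtauxn} I would expand $p(t)=p'(t^{*})(t-t^{*})+\O((t-t^{*})^{2})$, combine this with the expansion of $g_{3}$ and with $t-t^{*}=-c^{*}\tau/n^{1/2}$ to obtain
\[
	n^{1/2}g_{3}(x_{n};t)=-3p'(t^{*})(c^{*})^{-1/3}\tau+\O(n^{-1/2}),
\]
and then divide by $[f_{3}(x_{n};t)]^{1/2}=(c^{*})^{-1/3}+\O(n^{-1/2})$, yielding $n^{1/2}\tau(x_{n};t)=-3p'(t^{*})\tau+\O(n^{-1/2})$. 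The lemma is thus reduced to the single algebraic identity $p'(t^{*})=-1/3$.

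The main (and really the only non-routine) ingredient is therefore the verification of $p'(t^{*})=-1/3$. Starting from $p=c^{2}/(t(1-t))-c$ and using $c(t^{*})=c^{*}=t^{*}(1-t^{*})$ so that $c/(t(1-t))=1$ at $t=t^{*}$, differentiation shows that the term coming from $(c/(t(1-t)))'$ reduces to $p'(t^{*})=c'(t^{*})-(1-2t^{*})$. The value of $c'(t^{*})$ is then computed directly from the closed-form expression in \eqref{defcp}: writing $c=(1-t)^{2}X(t)^{2}/4$ with $X(t)=\sqrt{a}/2+\sqrt{a/4+2t/(1-t)}$, one checks $X(t^{*})=2\sqrt{a}$ and $X'(t^{*})=2(a+1)^{2}/(3\sqrt{a})$, which together with $1-t^{*}=1/(a+1)$ yield $c'(t^{*})=2/3-2t^{*}$ and hence $p'(t^{*})=-1/3$. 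Plugging back finishes \eqref{limtauxn}. Conceptually, this identity is the algebraic reason why the specific scaling constants in \eqref{doublescaling} and in the rescaling $x_{n}=c^{*}x/n^{3/2}$ produce precisely the canonical parameter $\tau$ and variable $x$ in the limiting kernel, rather than rescaled versions of them.
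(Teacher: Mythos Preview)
Your proof is correct and follows essentially the same route as the paper: expand $f_{3}$ and $g_{3}$ about $(z,t)=(0,t^{*})$ using \eqref{f3g3at0} and \eqref{defcp}, substitute into \eqref{defftau}, and read off the limits. The only cosmetic difference is that the paper packages the key derivative as $\partial_{t}g_{3}(0,t^{*})=-(c^{*})^{-4/3}$ (stated ``after some calculations''), whereas you compute the equivalent $p'(t^{*})=-1/3$ explicitly; your derivation via $c'(t^{*})=2/3-2t^{*}$ is correct and in fact supplies the detail the paper omits.
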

\begin{proof}
By \eqref{defftau}, \eqref{f3g3t*} and \eqref{expf3g3at0} we have
\[ f(z;t) = z [f_3(z;t)]^{3/2} = z \left[\left(c^{*}\right)^{-2/3} + \O 
(z)+\O (t-t^{*}) \right]^{3/2} =
		\frac{z}{c^*} \left[1 + \O(z) + \O(t-t^*)\right] \]
	as $z \to 0$ and $t \to t^*$.
This readily implies \eqref{limfxn}.	

Again by \eqref{defftau},  \eqref{f3g3t*} and \eqref{expf3g3at0} we have
\[ \tau(z;t) = \frac{g_3(z;t)}{f_3(z;t)^{1/2}} = \left(c^{*}\right)^{1/3} g_3(z;t) 
\left[1 + \O(z) + \O(t-t^*)\right] \]
	as $z \to 0$ and $t \to t^*$.
Then from the definitions in Lemmas \ref{wat0}, \ref{zetaat0},
and \ref{lambdaat0} it is not difficult to verify that $g_3(z;t)$ is analytic in
both arguments with $g_3(0,t^*) = 0$, so that
\[ g_3(z,t) = \left[ \frac{\partial g_3}{\partial t}(0,t^*) \right](t-t^*) + \O(z) + \O(t-t^*)^2 \]
as $z \to 0$ and $t \to t^*$.
By \eqref{f3g3at0} we have $g_{3}(0;t)= 3pc^{-4/3}$ 
and using the dependence of $p$ and $c$ on $t$ as given in \eqref{defcp} 
we find, after some calculations, 
that 
\[ \frac{\partial g_3}{\partial t}(0,t^*) = - \left(c^{*}\right)^{-4/3}. \]
Hence $\tau(z;t) =   \left(c^{*}\right)^{-4/3} (t^*-t) + \O(z) + \O((t-t^*)^2)$
and 
\[ n^{1/2} \tau(x_n;t) = n^{1/2} \left(c^{*}\right)^{-1}(t^*-t) + \O(n^{-1}) + n^{1/2} \O((t-t^*)^2) \]
which by \eqref{doublescaling2} indeed leads to \eqref{limtauxn}.
\end{proof}

\begin{lemma} \label{lem-E-Mstar-R}
Under the same assumptions as in Lemma~\ref{lem-f-tau}
\begin{multline} \label{limRE}
	E_n^{-1} (y_{n}) 	
	 \left( I  + \ds \frac{\tau(y_n;t)(n\tau^2(y_n;t)+9\alpha)- \tau(0;t)(n \tau^2(0;t) + 9\alpha) }{9 c^*  y_n} \mathcal M_\alpha^* \right) 
   	R^{-1} (y_n) \\
   	 \times 
    R (x_n) \left( I  -  \ds \frac{\tau(x_n;t)(n\tau^2(x_n;t)+9\alpha)- \tau(0;t)(n \tau^2(0;t) + 9\alpha) }{9 c^* x_n} \mathcal M_\alpha^* \right)	
    E_n(x_n) \to I.
\end{multline}
\end{lemma}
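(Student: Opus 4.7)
My plan is to decompose the product into three pieces---the outer $E_n^{\pm 1}$ factors, the middle product $R^{-1}(y_n)R(x_n)$, and the bracketed factors containing $\mathcal M_\alpha^*$---and estimate each separately in the double scaling limit \eqref{doublescaling2}. Throughout, I set $\beta_z = (h_n(z;t)-h_n(0;t))/z$, so that the two brackets appearing in the lemma are $I+\beta_{y_n}\mathcal M_\alpha^*$ and $I-\beta_{x_n}\mathcal M_\alpha^*$.

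First I would handle $R^{-1}(y_n)R(x_n)$. Since $R$ is analytic in the disk $D(0,n^{-1/2})$ (it has no jumps there after the transformation of Definition~\ref{defR}) and satisfies the uniform bound \eqref{asympR}, Cauchy's derivative estimate on $\{|z|\le n^{-1/2}/2\}$ gives $\sup\|R'(z)\|=\O(n^{-1/6})/\O(n^{-1/2})=\O(n^{1/3})$. Combined with $|x_n-y_n|=\O(n^{-3/2})$, this yields $R(x_n)-R(y_n)=\O(n^{-7/6})$ and hence $R^{-1}(y_n)R(x_n)=I+\O(n^{-7/6})$. Next, the nilpotency $(\mathcal M_\alpha^*)^2=0$ from \eqref{Malpha2} collapses the algebraic product of the two brackets to $I+(\beta_{y_n}-\beta_{x_n})\mathcal M_\alpha^*$. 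The definition \eqref{def:hn} of $h_n$, together with $n^{1/2}\tau(z;t)=\O(1)$ on $|z|\le n^{-1/2}$ by \eqref{taubounded}, shows $h_n(z;t)=\O(n^{-1/2})$ there; a Cauchy estimate applied to the analytic function $\beta_z$ then gives $|\beta_{y_n}-\beta_{x_n}|=\O(n^{1/2})|y_n-x_n|=\O(n^{-1})$. Altogether,
\[
(I+\beta_{y_n}\mathcal M_\alpha^*)\,R^{-1}(y_n)R(x_n)\,(I-\beta_{x_n}\mathcal M_\alpha^*)=I+\O(n^{-1})\mathcal M_\alpha^*+\O(n^{-7/6}).
\]

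The subtlest step is the conjugation by $E_n$. Although $E_n$ is analytic in the fixed disk $D(0,r_0)$ by Lemma~\ref{Eanalytic}, its determinant $\det E_n(z)=(n^{3/2}f(z)/z)^\alpha$ (computed in the proof of that lemma) grows like $n^{3\alpha/2}$, so the norms $\|E_n\|$ and $\|E_n^{-1}\|$ are not individually bounded in $n$. I would prove separately that (i) $E_n^{-1}(y_n)E_n(x_n)=I+o(1)$ and (ii) the quantity $E_n^{-1}(y_n)\mathcal M_\alpha^* E_n(x_n)$ remains of size compatible with the prefactor $\O(n^{-1})$. For (i) one applies Cauchy's estimate to $E_n$ on the fixed circle $|z|=r_0/2$, where $\|E_n\|$ and $\|E_n^{-1}\|$ are only polynomially large in $n$, and exploits that the gap $|x_n-y_n|=\O(n^{-3/2})$ absorbs this polynomial loss. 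For (ii) one substitutes the key identity from the lemma inside the proof of Proposition~\ref{prop:JS}, namely
\[
\mathcal M_\alpha=\tfrac{c^{4/3}}{3}\,z^{2/3}\widetilde N_\alpha(z)\diag(\omega^2,\omega,1)\widetilde N_\alpha^{-1}(z)+\O(z^{1/3}),
\]
into $E_n^{-1}(z)\mathcal M_\alpha E_n(z)=L_\alpha(n^{3/2}f(z))\widetilde N_\alpha^{-1}(z)\mathcal M_\alpha \widetilde N_\alpha(z)L_\alpha^{-1}(n^{3/2}f(z))$, which reduces the conjugation to one of a diagonal matrix scaled by $z^{2/3}=\O(n^{-1})$ at $z=x_n,y_n$ and therefore stays bounded as $n\to\infty$. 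Combining (i) and (ii) gives $E_n^{-1}(y_n)\bigl[I+\O(n^{-1})\mathcal M_\alpha^*+\O(n^{-7/6})\bigr]E_n(x_n)=I+o(1)$, which is the claim.

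The main obstacle is exactly this last step: one must carefully track the competition between the polynomial-in-$n$ growth of the individual factors $\|E_n^{\pm 1}\|$, the smallness $z^{2/3}=\O(n^{-1})$ inherited from the rank-one structure of $\mathcal M_\alpha^*$ near the origin, and the prefactors $\O(n^{-1})$ and $\O(n^{-7/6})$ coming from the previous two steps. The nilpotency $(\mathcal M_\alpha^*)^2=0$ is what ultimately makes all three estimates combine cleanly, just as it did in the definition \eqref{defR1}--\eqref{defR2} and in the bounds of Lemmas~\ref{lem:JumpsR1}--\ref{lem:JumpsR2}.
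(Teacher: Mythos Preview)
Your overall decomposition---handle $R^{-1}(y_n)R(x_n)=I+\O(n^{-7/6})$ via a Cauchy estimate, collapse the two brackets to $I+(\beta_{y_n}-\beta_{x_n})\mathcal M_\alpha^*$ using nilpotency, then conjugate by $E_n^{\pm 1}$---is exactly the paper's strategy, and your first paragraph reproduces \eqref{Rynxn} and \eqref{rhonestimate2} correctly.

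The $E_n$-conjugation, however, has two genuine gaps. First, you never treat the term $E_n^{-1}(y_n)\,\O(n^{-7/6})\,E_n(x_n)$ coming from $R^{-1}(y_n)R(x_n)$; this requires the \emph{quantitative} bound $\|E_n^{-1}(y_n)\|\cdot\|E_n(x_n)\|=\O(n)$, and ``polynomially large'' does not suffice (if the product were $\O(n^{7/6+\epsilon})$ the argument would fail). The paper obtains this from the factorization \eqref{En2}, which exhibits $E_n(z)$ as a harmless scalar times an $n$-independent analytic $F_\alpha(z)$ times $\diag(n^{-1/2},1,n^{1/2})$ up to bounded factors. The same factorization also makes your step (i) rigorous.

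Second, and more seriously, your argument for (ii) does not establish boundedness. When you insert the identity $\mathcal M_\alpha=\tfrac{c^{4/3}}{3}z^{2/3}\widetilde N_\alpha(z)\diag(\omega^2,\omega,1)\widetilde N_\alpha^{-1}(z)+\O(z^{1/3})$ into $\widetilde N_\alpha^{-1}(z)\,\mathcal M_\alpha\,\widetilde N_\alpha(z)$, the leading term indeed becomes a bounded diagonal times $z^{2/3}$, but the $\O(z^{1/3})$ remainder is now sandwiched between two factors of size $\O(z^{-1/3})$ (by \eqref{asymptotics0}) and contributes $\O(z^{-1/3})=\O(n^{1/2})$, not $\O(1)$. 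The correct estimate is \eqref{Enestimate3}, namely $E_n^{-1}(y_n)\mathcal M_\alpha^* E_n(x_n)=\O(n^{1/2})$, which is still compatible with your $\O(n^{-1})$ prefactor but needs a different proof. The paper uses \eqref{En2} to see that all entries except the $(1,3)$ one are $\O(n^{1/2})$ by the diagonal scaling alone; the $(1,3)$ entry is a~priori $\O(n)$, and one must invoke the special relation $K_1^2-2iK_2K_3=0$ of Lemma~\ref{lemma:K} to show that $\mathcal M_\alpha^*\widetilde N_\alpha(x_n)=\O(1)$ and reduce it to $\O(n^{1/2})$ as well.
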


\begin{proof}
For $z = \O(n^{-3/2})$ we have by Cauchy's theorem and \eqref{asympR}
\[ R'(z) = \frac{1}{2\pi i} \int_{|s| = n^{-1/2}} \frac{R(s) - I}{(s-z)^2} ds
	= \O(n^{1/3}) \qquad \text{as } n \to \infty.  \]
Therefore $R(x_n) - R(y_n) = \O((x_n-y_n) n^{1/3}) = \O(n^{-7/6})$ and so
\begin{equation} \label{Rynxn} 
	R^{-1}(y_n) R(x_n) = I + R^{-1}(y_n)(R(x_n) - R(y_n)) = I + \O(n^{-7/6}) \qquad \text{as } n \to \infty. 
	\end{equation}
where we use that $R^{-1}(y_n)$ remains bounded as $n\to \infty$, which also follows
from \eqref{asympR}.

Let us write 
\begin{equation} \label{rhonxnt} 
	\rho_n(x_n,t) : =  \frac{\tau(x_n;t)(n\tau^2(x_n;t)+9\alpha)- \tau(0;t)(n \tau^2(0;t) + 9\alpha) }{9 c^* x_n} 
	\end{equation}
and similarly for $\rho_n(y_n,t)$. Then explicit calculations 
(done with the help of Maple) show that
\begin{equation} \label{rhonestimate1}
	\rho_n(x_n,t) = \frac{\tau^2 + 3 \alpha}{36 (c^*)^2} -
	\frac{\tau^3 (a-2)}{54(a+1)(c^*)^2} n^{-1/2} + \O(n^{-1}) 
	\end{equation}
and similarly for $\rho_n(y_n,t)$. Thus
\begin{equation} \label{rhonestimate2} 
	\rho_n(y_n,t) - \rho_n(x_n,t) = \O(n^{-1})  \qquad \text{as } n \to \infty.
	\end{equation}

Using \eqref{Rynxn}, \eqref{rhonestimate1}, \eqref{rhonestimate2} 
and the fact that $(\mathcal M_{\alpha}^*)^2 = 0$, we see that \eqref{limRE} will 
follow from the following three estimates
\begin{align} \label{Enestimate1}
	E_n^{-1}(y_n) E_n(x_n) & = I + \O(n^{-1/2}) \\ \label{Enestimate2}
	E_n^{-1}(y_n) \O(n^{-7/6}) E_n(x_n) & = \O(n^{-1/6}) \\ \label{Enestimate3}
	E_n^{-1}(y_n) \mathcal M_{\alpha}^* E_n(x_n) & = \O(n^{1/2})
	\end{align}
	as $n \to \infty$.

By \eqref{defE}, the analytic factor $E_n(z)$ depends on $n$ mainly via the argument $n^{3/2} f(z;t)$ of
$L_{\alpha}^{-1}$. By  \eqref{tildeNalpha}, \eqref{defL}, and \eqref{defE}, 
we can factor out the dependence on $n^{3/2} f(z;t)$, and we obtain
\begin{equation} \label{En2} 
	E_n(z) = \left( \frac{n^{3/2} f(z;t)}{z} \right)^{\alpha/3} F_{\alpha}(z)
	 \diag \begin{pmatrix} \left( \frac{n^{3/2} f(z;t)}{z} \right)^{-1/3} & 1 &  
	 	\left( \frac{n^{3/2} f(z;t)}{z} \right)^{1/3} \end{pmatrix} 
	 	\end{equation}
where
\begin{equation} \label{Falpha} 
	F_{\alpha}(z) = \frac{1}{i \sqrt{3}} \widetilde{N}_{\alpha}(z) 
	\begin{pmatrix} e^{-\alpha \pi i/3} & 0 & 0 \\ 0 & e^{\alpha \pi i/3} & 0 \\ 0 & 0 & 1 \end{pmatrix}
		\begin{pmatrix} \omega^2 & 1 & \omega  \\ \omega & 1 & \omega^2 \\ 1 & 1 & 1 \end{pmatrix}
		\begin{pmatrix} z^{-1/3} & 0 & 0 \\ 0 & 1 & 0 \\ 0 & 0 & z^{1/3} \end{pmatrix} 
		\end{equation}
is analytic around $z=0$ and depends on $n$ in a very mild way only, namely via the
dependence of $\widetilde{N}_{\alpha}(z)$ on the endpoint $q$ which is only slightly moving with $n$.

The scalar factor in \eqref{En2} will appear in the products \eqref{Enestimate1}--\eqref{Enestimate3}
in the form
\begin{equation} \label{ratiofxnfyn} 
	\left( \frac{n^{3/2} f(x_n;t)}{x_n} \frac{y_n}{n^{3/2} f(y_n;t)} \right)^{\alpha/3} = 1 + \O(n^{-1/2}) 
	\end{equation}
where the estimate follows from \eqref{xnyn} and \eqref{limfxn}.
Thus by \eqref{En2}
\begin{multline*} 
	E_n^{-1}(y_n) E_n(x_n) = 
	(1 + \O(n^{-1/2})) 
	 \diag \begin{pmatrix} \left( \frac{n^{3/2} f(y_n;t)}{y_n} \right)^{1/3} & 1 &  
	 	\left( \frac{n^{3/2} f(y_n;t)}{y_n} \right)^{-1/3} \end{pmatrix} \\
	 	\times
		  F_{\alpha}^{-1}(y_n) F_{\alpha}(x_n) 
	 \diag \begin{pmatrix} \left( \frac{n^{3/2} f(x_n;t)}{x_n} \right)^{-1/3} & 1 &  
	 	\left( \frac{n^{3/2} f(x_n;t)}{x_n} \right)^{1/3} \end{pmatrix} 
	 	\end{multline*}
where $F_{\alpha}^{-1}(y_n) F_{\alpha}(x_n) = I + \O(x_n-y_n) = I + \O(n^{-3/2})$.
Since the two entries $\left(\frac{n^{3/2} f(y_n;t)}{y_n} \right)^{1/3}$ and $\left(\frac{n^{3/2} f(x_n;t)}{x_n} \right)^{1/3}$
in the diagonal matrices grow like $\O(n^{1/2})$ we find \eqref{Enestimate1}, where we also use \eqref{ratiofxnfyn}.

We similarly have
\begin{multline*} 
	E_n^{-1}(y_n) \O(n^{-7/6}) E_n(x_n) = 
	(1 + \O(n^{-1/2})) 
	 \diag \begin{pmatrix} \left( \frac{n^{3/2} f(y_n;t)}{y_n} \right)^{1/3} & 1 &  
	 	\left( \frac{n^{3/2} f(y_n;t)}{y_n} \right)^{-1/3} \end{pmatrix} \\
	 	\times
		  F_{\alpha}^{-1}(y_n) \O(n^{-7/6}) F_{\alpha}(x_n) 
	 \diag \begin{pmatrix} \left( \frac{n^{3/2} f(x_n;t)}{x_n} \right)^{-1/3} & 1 &  
	 	\left( \frac{n^{3/2} f(x_n;t)}{x_n} \right)^{1/3} \end{pmatrix}.
	 	\end{multline*}
Since $F_{\alpha}^{-1}(y_n)$ and $F_{\alpha}(x_n)$ remain bounded as $n \to \infty$,
and the two diagonal matrices are $\O(n^{1/2})$ we obtain the estimate \eqref{Enestimate2}.

To prove the final estimate \eqref{Enestimate3} we note that
\begin{multline} \label{EnMalphaEn} 
	E_n^{-1}(y_n) \mathcal M_{\alpha}^* E_n(x_n) = 
	(1 + \O(n^{-1/2})) 
	 \diag \begin{pmatrix} \left( \frac{n^{3/2} f(y_n;t)}{y_n} \right)^{1/3} & 1 &  
	 	\left( \frac{n^{3/2} f(y_n;t)}{y_n} \right)^{-1/3} \end{pmatrix} \\
	 	\times
		  F_{\alpha}^{-1}(y_n) \mathcal M_{\alpha}^* F_{\alpha}(x_n) 
	 \diag \begin{pmatrix} \left( \frac{n^{3/2} f(x_n;t)}{x_n} \right)^{-1/3} & 1 &  
	 	\left( \frac{n^{3/2} f(x_n;t)}{x_n} \right)^{1/3} \end{pmatrix}
	 	\end{multline}
which would lead to $\O(n)$ as $n \to \infty$ if we use the same estimates as above.
However, by the form of the right hand side of \eqref{EnMalphaEn}, we see that it is only the
$(1,3)$ entry $E_n^{-1}(y_n) \mathcal M_{\alpha}^* E_n(x_n)$ that could grow like $\O(n)$.
The other entries are $\O(n^{1/2})$ as claimed in \eqref{Enestimate3}.

We have by \eqref{EnMalphaEn} and \eqref{Falpha}
\begin{multline*} 
	\left[ E_n^{-1}(y_n) \mathcal M_{\alpha}^* E_n(x_n) \right]_{1,3}
		= \left( \frac{n^{3/2} f(x_n;t)}{x_n} \right)^{1/3}  \left( \frac{n^{3/2} f(y_n;t)}{y_n} \right)^{1/3} 
		\left[ F_{\alpha}^{-1}(y_n) \mathcal M_{\alpha}^* F_{\alpha}(x_n) \right]_{1,3}   \\
		= \left( n^{3/2} f(x_n;t)  \right)^{1/3}  \left( n^{3/2} f(y_n;t) \right)^{1/3}  
		\O \left( \left\| \widetilde{N}_{\alpha}^{-1}(y_n) \mathcal M_{\alpha}^* \widetilde{N}_{\alpha}(x_n) \right\| \right)  \\
		= \O \left( \left\| \widetilde{N}_{\alpha}^{-1}(y_n) \mathcal M_{\alpha}^* \widetilde{N}_{\alpha}(x_n) \right\| \right)
			\end{multline*}
where in the last step we used \eqref{limfxn}. Both matrices
$\widetilde{N}_{\alpha}^{-1}(y_n)$ and $\widetilde{N}_{\alpha}(x_n)$ grow like $n^{1/2}$, see \eqref{Nalphaat0}.
However 
\[ \mathcal M_{\alpha}^* \widetilde{N}_{\alpha}(x_n) = \O(1) \qquad \text{as } n \to \infty \]
which follows from \eqref{tildeNalpha} and the fact that  by \eqref{asymptoticsN0at0}, \eqref{Malpha}, 
\begin{align*} 
	\mathcal M_{\alpha}^* C_{\alpha} N_0(z) & =
	C_{\alpha} \begin{pmatrix} K_1 \\ K_2 \\ K_3 \end{pmatrix}
		\begin{pmatrix} K_1 & K_2 & K_3 \end{pmatrix} \begin{pmatrix} 1 & 0 & 0 \\ 0 & 0 & -i \\ 0 & -i & 0 \end{pmatrix}
			c^{-2/3} \begin{pmatrix} K_1 \\ K_2 \\ K_3 \end{pmatrix} \begin{pmatrix} \omega^2 & \omega & 1 \end{pmatrix}
				z^{-1/3} + \O(1) \\
				& = \O(1) \qquad \text{as } z \to 0 
\end{align*}
because of the relation \eqref{relationK} satisfied by the constants $K_j$.
Thus $\widetilde{N}_{\alpha}^{-1}(y_n) \mathcal M_{\alpha}^* \widetilde{N}_{\alpha}(x_n)  = \O(n^{1/2})$
and \eqref{Enestimate3} follows.

This completes the proof of Lemma~\ref{lem-E-Mstar-R}.
\end{proof}

From \eqref{Kn}, \eqref{xnyn} and Lemmas~\ref{lem-f-tau} and \ref{lem-E-Mstar-R}, we obtain
\[ \lim_{n\to \infty} 
	\frac{c^{*}}{n^{3/2}}K_{n}\left(\frac{c^{*}x}{n^{3/2}}, \frac{c^{*}y}{n^{3/2}} ; 
		t^* - \frac{c^* \tau}{n^{1/2}} \right) = K^{crit}_{\alpha} (x,y;\tau),
\]
where $K^{crit}_{\alpha} $ is given by \eqref{KcritRHP}. This proves Theorem~\ref{thm:PhiRHP}.

\subsection{Proof of Theorem~\ref{corrkernel}}

Let us analyze the expression \eqref{KcritRHP} for the critical kernel. 
We define
\[ \Phi_{\alpha}^{up} = \begin{pmatrix} p_2 & p_3 & p_1 \\ p_2' & p_3' &
p_1' \\ p_2'' & p_3'' & p_1'' \end{pmatrix} \] as an analytic
matrix-valued function in $\mathbb C \setminus i \mathbb R_-$. It
is the analytic continuation of the restriction of $\Phi_{\alpha}$ to the
upper sector $\pi/4 < \arg z < 3 \pi/4$ to the cut plane $\mathbb C \setminus i \mathbb R_-$.

Then by the jump relations of $\Phi_{\alpha}$, see \eqref{Phijumps}, 
we can rewrite \eqref{KcritRHP}  as
\begin{equation}\label{critKalter}
K_\alpha^{crit}(x,y;\tau)
    = \frac{1}{2\pi i(x-y)} \begin{pmatrix} 0 & 1 & 0
    \end{pmatrix} \left(\Phi_{\alpha}^{up}\right)^{-1}(y) \Phi_{\alpha}^{up}(x) \begin{pmatrix} 1 \\ 0
    \\ 0 \end{pmatrix},
\end{equation}    
for $x, y > 0$. Clearly,
\begin{equation}\label{critKcol} 
\Phi_{\alpha}^{up}(x) \begin{pmatrix} 1 \\ 0 \\ 0 \end{pmatrix} = \begin{pmatrix} p_2(x) \\ p_2'(x) \\ p_2''(x)
\end{pmatrix}. 
\end{equation}

The inverse of $\Phi_{\alpha}^{up}$ is built out of solutions of the
differential equation
\begin{equation} \label{eqadj}
x q''' + (3-\alpha) q'' - \tau q' + q = 0,
\end{equation}
which is, up to a sign, the adjoint of the equation \eqref{ODEnew}.

Define the pairing
\begin{multline} \label{def:concomitantXY}
	[p(x),q(y) ] =  (y q'' (y) -(\alpha-2) q' (y) -\tau q (y) )p (x) \\ 
	 + (-y q' (y)   + (\alpha-1)q (y) ) p' (x) +yq (y)p'' (x),
\end{multline}
and denote $[p,q](x)=[p(x),q(x)]$ which is the bilinear concomitant. Then
\begin{multline*}
    \frac{d}{dx} [p, q](x)  =
    p(x) \left(x q'''(x) + (3-\alpha) q''(x) - \tau q'(x) + q(x)\right) \\
            + q(x) \left(x p'''(x) + \alpha p''(x) - \tau p'(x) - p(x) \right),
        \end{multline*}
which shows that if $p$ and $q$ satisfy the respective differential equations,
then the bilinear concomitant $[p,q](x)$ is constant.

To find the inverse of $\Phi_{\alpha}^{up}$ we need solutions, that we call
$q_{1}$, $q_{2}$, and $q_{3}$, dual to $p_1$, $p_2$, $p_3$,
satisfying
\begin{equation}\label{dual}
[p_j ,q_k] = \delta_{j,k}, \qquad j, k = 1, 2, 3.
\end{equation}
The inverse matrix is then given by
\[ \left(\Phi_{\alpha}^{up}\right)^{-1}(z) =\begin{pmatrix}
 zq_2''(z)-(\alpha-2) q_2'(z)-\tau q_2(z) & -zq_2'(z) + (\alpha-1)q_2(z) & zq_2(z) \\
 zq_3''(z)-(\alpha-2) q_3'(z)-\tau q_3(z) & -zq_3'(z) + (\alpha-1)q_3(z) & zq_3(z) \\
 zq_1''(z)-(\alpha-2) q_1'(z)-\tau q_1(z) & -zq_1'(z) + (\alpha-1)q_1(z) & zq_1(z)
\end{pmatrix},
\]
and
\begin{equation} \label{critKrow}
\begin{pmatrix} 0 & 1 & 0
    \end{pmatrix}  \left(\Phi_{\alpha}^{up}\right)^{-1}(y) = 
    \left(y q_3''(y)-(\alpha-2) q_3'(y)-\tau q_3(y) , -yq_3'(y) + (\alpha-1)q_3(y) , yq_3(y)\right).
\end{equation}
Hence, the solution $q_3$ is the relevant one for the critical kernel: by \eqref{critKalter}--\eqref{critKrow}, we get
$$
K^{crit}(x,y)=\frac{[p_2(x),q_3(y)]}{2\pi i(x-y)}.
$$

Before continuing, let us build the dual functions for $p_j$. 
The solutions $q_{j} (z)$,
$j=1,2, 3$, of
\eqref{eqadj} admit integral representations
\begin{equation}\label{defv}
q_{j} (z)= C_j \int_{\Sigma_{j}}t^{-\alpha}e^{-\tau/t}e^{-1/ (2t^{2})} e^{-zt}dt.
\end{equation}
When the variable $z$ is positive, the contours $\Sigma_{j}$,
$j=1,2,3$, can be chosen as in Figure~\ref{Sigma}; note that the integrals converge since the
contours $\Sigma_{j}$ approach the origin 
tangentially to
the real axis and go to infinity along the positive real axis. We choose the main branch of $t^\alpha$ in \eqref{defv} with the cut in the $t$-plane along $\R_+$, and allow $\Sigma_1$ to go along the upper side of the cut. Observe that we may take the same branch cut in the definition of $p_2$ in \eqref{integralForp2}.

\begin{figure}[t]
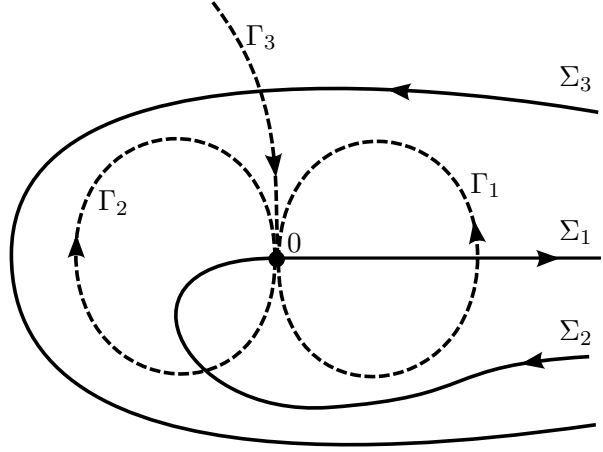

\centering 
\begin{overpic}[scale=1.7]%
{pathsforq}%
\put(47,33){$0 $}
  \put(15,40){$\Gamma_{2} $}
  \put(78,43){$\Gamma_{1}  $}
   \put(40,68){$\Gamma_{3} $}
 \put(93,18){$\Sigma_{2} $}
  \put(93,35){$\Sigma_{1}  $}
   \put(93,61){$\Sigma_{3} $}
    \end{overpic}
\caption{The contours of integration $\Sigma_{j}$, $j=1,2, 3$, (in bold) 
used in the definition \eqref{defv} of the functions $q_{j} (z)$, when
$z$ is positive. Also shown the contours $\Gamma_{j}$, $j=1,2, 3$ (dashed lines).}
\label{Sigma}
\end{figure}

In order to build an integral expression for the bilinear concomitant we need the following
\begin{lemma}\label{intersection}
Let $p$ and $q$ be solutions of \eqref{ODEnew} and \eqref{eqadj}
respectively, with integral expressions
\[
p (z)=
\int_{\Gamma}t^{-1}e^{V(t)} e^{zt}dt,\qquad 
q(z)= \int_{\Sigma}t^{-2}e^{-V(t)}  e^{-zt}dt,
\]
where $\Gamma$ and
$\Sigma$ are one of the contours depicted in Figure~\ref{Sigma}, respectively, and
\begin{equation}\label{defPotential}
V(t)=(\alpha-2) \log(t) +\frac{\tau}{t}+\frac{1}{2t^2}.
\end{equation}
Then
\begin{enumerate}
\item[\rm (a)]
\begin{align*}
\int_{\Gamma}t^{-2}e^{V(t)} e^{zt}dt & = z p''(z) +(\alpha-1)\, p'(z) -\tau p(z), \\
\int_{\Gamma}t^{-3}e^{V(t)} e^{zt}dt & = -\tau z p''(z) +(z-\tau(\alpha-1))\, p'(z) +(\alpha-2+\tau^{2}) p(z), \\
\int_{\Sigma}t^{-3}e^{-V(t)} e^{-zt}dt & = z q''(z) +(2-\alpha)\, q'(z) -\tau q(z).
\end{align*}

\item[\rm (b)] 
\begin{equation}
\label{identityconcomitant}
[p (x),q(y) ] =  \int_{\Gamma} \int_{\Sigma} \left(   \frac{V'(t)-V'(s)}{t-s} -\frac{(x-y)(t+s)}{s^2}\right)\, e^{V(t)-V(s)} e^{x t- y s }\,  dt ds.
\end{equation}
In particular,
\begin{equation}
\label{identityconcomitantConfl}
[p ,q ](x)=  \int_{\Gamma} \int_{\Sigma}  \frac{V'(t)-V'(s)}{t-s}\, e^{V(t)-V(s)} e^{x(t-s) }\,  dt ds.
\end{equation}
\item[\rm (c)] If the only point of intersection of $\Gamma$ and $\Sigma$ is the origin, then $[p,q](x)\equiv 0$. 
\item[\rm (d)] If $\Gamma$ and $\Sigma$ intersect transversally at $z_0\neq 0$, and if the contours are oriented so that $\Gamma$ meets $\Sigma$ in $z_0$ on the ``$-$''-side of $\Sigma$, then
$$
[p,q](x)\equiv 2\pi i.
$$
\end{enumerate}
\end{lemma}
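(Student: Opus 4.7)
For part (a), I would derive each identity by integration by parts on the appropriate contour. The key observation is that along every contour $\Gamma_j$ (resp.\ $\Sigma_j$), the function $e^{V(t)+zt}$ (resp.\ $e^{-V(s)-zs}$) vanishes at each endpoint: at the origin the tangency to the imaginary axis (resp.\ real axis) forces $\Re(1/(2t^{2})) \to -\infty$ (resp.\ $\Re(-1/(2s^{2})) \to -\infty$), which overwhelms all other factors; at infinity the chosen direction makes $e^{zt}$ (resp.\ $e^{-zs}$) decay exponentially. Therefore $\int_{\Gamma} \tfrac{d}{dt}[F(t) e^{V(t)+zt}]\, dt = 0$ for any rational $F$. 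Taking $F(t) = t$ gives the first identity; $F(t) = t^{2}$ merely reproduces the ODE \eqref{ODEnew} and, combined with the first, yields the second; the analogous argument on $\Sigma$ with $F(s) = 1$ gives the third.

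For part (b), I would substitute the integral representations of $p, p', p''$ and $q, q', q''$ into the bilinear expression \eqref{def:concomitantXY}. By part (a) the coefficient of $p(x)$ becomes $\int_{\Sigma} s^{-3} e^{-V(s)-ys}\, ds$, while the coefficients of $p'(x)$ and $p''(x)$ are read directly from the representations of $q, q', q''$. This puts $[p(x),q(y)]$ in the form $\iint R(t,s;x,y)\, e^{V(t)-V(s)+xt-ys}\, dt\, ds$ for an explicit rational $R$. A further integration by parts in the $t$ variable, using $x e^{V(t)+xt} = \partial_{t}(e^{V(t)+xt}) - V'(t) e^{V(t)+xt}$ together with the explicit form of $V'$, rearranges $R$ into $\frac{V'(t)-V'(s)}{t-s} - \frac{(x-y)(t+s)}{s^{2}}$ and establishes \eqref{identityconcomitant}; setting $y=x$ cancels the correction term and yields \eqref{identityconcomitantConfl}.

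For parts (c) and (d), the basic tool is the exact-form identity
\[
\frac{V'(t)-V'(s)}{t-s}\, e^{V(t)-V(s)+x(t-s)}\, dt \wedge ds \;=\; d\beta, \qquad \beta \;=\; H(t,s)\,(ds - dt),
\]
with $H(t,s) = e^{V(t)-V(s)+x(t-s)}/(t-s)$, which holds because $(\partial_{t}+\partial_{s})[1/(t-s)] = 0$ and $(\partial_{t}+\partial_{s}) e^{x(t-s)} = 0$. The form $\beta$ has a simple pole on the diagonal $\{t=s\}$, which meets $\Gamma \times \Sigma$ exactly at the points $(z,z)$ with $z \in \Gamma \cap \Sigma$. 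Applying Stokes' theorem to $\Gamma \times \Sigma$ with small 2-disks removed around each such point, the outer boundary $\partial(\Gamma \times \Sigma)$ contributes $0$ (either the contours are closed, or $H$ vanishes at their unbounded endpoints as in part (a)); the intersection at $(0,0)$ contributes $0$ because $e^{V(t)-V(s)}$ has double-exponential decay there which dominates the simple pole $1/(t-s)$. Each transverse intersection $(z_{0},z_{0})$ with $z_{0}\neq 0$ contributes a residue: parametrizing $\Gamma, \Sigma$ locally by complex unit tangents $a, b$ and passing to polar coordinates on the disk yields
\[
\oint_{u^{2}+v^{2}=\delta^{2}} \beta \;=\; \int_{0}^{2\pi} \frac{b\cos\phi + a\sin\phi}{a\cos\phi - b\sin\phi}\, d\phi \;=\; \pm\, 2\pi i,
\]
the sign being determined by $\operatorname{sgn}\Im(a\bar b)$, i.e., by on which side of $\Sigma$ the contour $\Gamma$ arrives at $z_{0}$. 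Hence $[p,q](x) \equiv 0$ in case (c), while the single transverse intersection in case (d) gives $[p,q](x) \equiv 2\pi i$ once the orientation hypothesis is imposed.

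The main obstacle is the orientation bookkeeping in the residue computation for (d), together with the rigorous justification of Stokes' theorem on $\Gamma \times \Sigma$ in the presence of the diagonal singularity of $\beta$; all the boundary-term vanishings in (a), (b), and the ``$(0,0)$ gives $0$'' statement in (c), (d), trace back to the common fact that $e^{V(t)} \to 0$ as $t \to 0$ along the distinguished tangent directions of the contours, which is what makes the whole family of formulas consistent.
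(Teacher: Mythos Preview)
Your proof is correct and, for parts (a) and (b), follows essentially the same route as the paper: integration by parts on the contours (the paper uses the equivalent trick of writing $h'=p$ and integrating the ODE, then checking the constant of integration via $\int_\Gamma d(t\,e^{V(t)+zt})=0$), followed by direct substitution of the identities from (a) into the bilinear form.

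For parts (c) and (d) your approach is organized differently from the paper's, though the underlying computation is the same. The paper works one variable at a time: it integrates by parts first in $t$ and then in $s$ using $\partial_t\frac{1}{t-s}+\partial_s\frac{1}{t-s}=0$, after excising a small arc of $\Sigma$ around the intersection point; the resulting boundary terms give $\int_\Gamma\big[\tfrac{e^{V(t)-V(s'')+x(t-s'')}}{t-s''}-\tfrac{e^{V(t)-V(s')+x(t-s')}}{t-s'}\big]\,dt$, and the $2\pi i$ in (d) is then read off as a Cauchy residue of the $t$-integral at $t=s'$. Your Stokes-theorem packaging on the surface $\Gamma\times\Sigma$, with $\beta=H(t,s)(ds-dt)$ and a disk removed around each diagonal point, is the two-variable version of exactly this calculation; your residue integral $\oint\frac{b\cos\phi+a\sin\phi}{a\cos\phi-b\sin\phi}\,d\phi=\pm 2\pi i$ is the same contribution the paper captures by deforming $\Gamma$ around $s'$. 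Your formulation has the advantage of making the topological nature of the answer (it depends only on the transverse intersection data) transparent, at the cost of having to justify Stokes' theorem on a surface with corners and a singular $1$-form; the paper's version is more elementary but hides the structure behind the sequential integrations.
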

\begin{proof}
Let us denote
$$
h(z)=\int_{\Gamma}t^{-2}e^{V(t)} e^{zt}dt.
$$
Then $h'(z)=p(z)= zp'''(z)+(\alpha+2) p''(z)-\tau p'(z)$, where we have used the differential equation \eqref{ODEnew}. Hence, $h(z)=  zp''(z)+(\alpha+1) p'(z)-\tau p(z)+c$, and in order to find the constant $c$ we compute
\begin{align*}
h(z)- & (  zp''(z)+ (\alpha+1) p'(z)-\tau p(z)) =\int_{\Gamma} \left(t^{-2}-zt -(\alpha+1) -\tau t^{-1} \right)e^{V(t)} e^{zt}dt \\
& = - \int_{\Gamma} t \, d\left(  e^{V(t)} e^{zt}\right) - \int_{\Gamma}  e^{V(t)} e^{zt}dt = t  e^{V(t)} e^{zt} \bigg|_{\Gamma}=0,
\end{align*}
due to the selection of the contour. This proves the first identity in (a). We leave the details of the proof of the remaining 
identities in (a) to the reader.
 
Since
$$
 \frac{V'(t)-V'(s)}{t-s}=\frac{1}{t^3 s} + \frac{1}{t^2 s^2}+ \frac{1}{t s^3} + \frac{2-\alpha}{t s} + \tau\, \left(\frac{1}{t^2 s} +\frac{1}{t s^2}  \right),   
$$
the expression in \eqref{identityconcomitant} is obtained by direct substitution of those in (a) into the right hand side and comparison with \eqref{def:concomitantXY}.

Furthermore, if $\Gamma$ and $\Sigma$ do not intersect, then
\begin{align*}
0 & = \int_\Gamma \int_\Sigma \left[ \frac{\partial}{\partial t}\, \frac{1}{t-s} +  \frac{\partial}{\partial s}\, \frac{1}{t-s} \right]e^{V(t)-V(s)} e^{x(t-s)}ds dt \\
& = \int_\Gamma \int_\Sigma e^{-V(s)-x s}   \frac{\partial}{\partial t}  \left(\frac{1}{t-s}\right)  e^{V(t)+x t}  ds dt + \int_\Gamma \int_\Sigma e^{V(t)+x t}   \frac{\partial}{\partial s}  \left(\frac{1}{t-s}\right)  e^{-V(s)-x s} ds dt \\
& = -\int_\Gamma \int_\Sigma e^{-V(s)-x s}    \frac{V'(t)+x}{t-s}   e^{V(t)+x t}  ds dt - \int_\Gamma \int_\Sigma e^{V(t)+x t}    \frac{-V'(s)-x}{t-s}  e^{-V(s)-x s} ds dt\\
&= -[p,q](x),
\end{align*}
where we have used integration by parts and \eqref{identityconcomitant}. 

On the other hand, if the only intersection of $\Gamma$ and $\Sigma$ is at the origin, we can perform the same calculation with $\Gamma$ and $\Sigma_\epsilon= \Sigma\setminus U_\epsilon(0)$, where $U_\epsilon(0)=\{z\in \C:\, |z|<\epsilon\}$. Taking a posteriori $\epsilon \to 0$ and observing that all integrands are strongly vanishing at the origin, we arrive at (c) also in this case.

Assume finally that there exists a point $z_0\neq 0$ such that $\Gamma\cap \Sigma=\{ z_0\}$. Denote now $\Sigma_\epsilon =\Sigma \setminus U_\epsilon(z_0)$. We have
\begin{align*}
[p,q](x)  = & \lim_{\epsilon \to 0} \left[ \int_\Gamma \int_{\Sigma_\epsilon} e^{-V(s)-x s}    \frac{V'(t)+x}{t-s}   e^{V(t)+x t}  ds dt \right. \\
& \left. + \int_\Gamma \int_{\Sigma_\epsilon} e^{V(t)+x t}    \frac{-V'(s)-x}{t-s}  e^{-V(s)-x s} ds dt \right] \\
= & \lim_{\epsilon \to 0} \int_\Gamma \left[      \frac{1}{t-s''}   e^{V(t)-V(s'')+x (t-s'')} -  \frac{1}{t-s'}   e^{V(t)-V(s')+x (t-s')}  \right] dt,
\end{align*}
where $s'$ and $s''$ are the two points of intersection of $\Sigma$ with the circle $|z-z_0|=\epsilon$. We can deform the path of integration $\Gamma$ in such a way that it forms a small loop around $z=s'$, picking up the reside of the integrand, and conclude that
$$
[p,q](x)=2\pi i.
$$
\end{proof}

With account of Lemma~\ref{intersection} we define $q_j$, $j=1,2,3$, as in \eqref{defv}, with paths $\Sigma_j$ specified in Figure~\ref{Sigma} and
\begin{equation}
\label{defC}
C_1=\frac{1}{2\pi i}, \quad C_2=C_3=\frac{e^{\alpha \pi i }}{2\pi i},
\end{equation}
and conclude that condition \eqref{dual} is satisfied.

Let us turn to the equality \eqref{Kcritical}; for that, let us define 
\begin{equation}
\label{Ktilde}
\widetilde{K}(x,y;z)= \int_{t\in\Gamma} \int_{s\in\Sigma}  e^{V(t)-V(s)} e^{xt-ys+(x-y)\log(z) }\, \frac{dt ds}{s-t},
\end{equation}
where the contours are as described in Theorem~\ref{corrkernel}. 
A straightforward computation shows that
$$
\frac{\partial}{\partial z} \widetilde{K}(x,y;z)\bigg|_{z=1}= (x-y) \widetilde{K}(x,y;1)=(x-y) \int_{t\in\Gamma} \int_{s\in\Sigma}  e^{V(t)-V(s)} e^{xt-ys }\, \frac{dt ds}{s-t}.
$$
On the other hand,  the change of variables $t\mapsto t-\log (z)$ and $s\mapsto s-\log (z)$ in  the definition of $\widetilde{K}(x,y;z)$ yields
$$
\widetilde{K}(x,y;z)=  \int_{t\in \Gamma} \int_{s\in \Sigma}  e^{V(t-\log(z))-V(s-\log(z))} e^{xt-ys }\, \frac{dt ds}{s-t},
$$
so that
$$
\frac{\partial}{\partial z} \widetilde{K}(x,y;z)\bigg|_{z=1}=  \int_{t\in \Gamma} \int_{s\in \Sigma}  \frac{V'(t)-V'(s)}{t-s}\, e^{V(t)-V(s)} e^{xt-ys }\,  dt ds.
$$
Thus, we get 
$$
\int_{t\in \Gamma} \int_{s\in \Sigma}  \frac{V'(t)-V'(s)}{t-s}\, e^{V(t)-V(s)} e^{xt-ys }\,  dt ds=(x-y) \int_{t\in\Gamma} \int_{s\in\Sigma}  e^{V(t)-V(s)} e^{xt-ys }\, \frac{dt ds}{s-t}.
$$
From \eqref{identityconcomitant} where we plug in the previous identity, together with the value of $C_3$ in \eqref{defC} and the definition of $p_2$ in \eqref{integralForp2}, we obtain \eqref{Kcritical}.

\section{Acknowledgements}

ABJK and FW acknowledge the support of a Tournesol program for scientific and technological exchanges between Flanders and France, project code 18063PB. 
ABJK is supported by K.U.~Leuven research grant OT/08/33,
FWO-Flanders project G.0427.09, and by the Belgian Interuniversity Attraction Pole P06/02.
A.M.-F. is supported in part by Junta de Andaluc\'{\i}a grants FQM-229, P06-FQM-01735 and P09-FQM-4643. 
ABJK and A.M.-F. are also supported by the Ministry of Science and Innovation of Spain (project code MTM2008-06689-C02-01).

\addcontentsline{toc}{section}{References}

\obeylines
\texttt{
A. B. J. Kuijlaars (arno.kuijlaars@wis.kuleuven.be)
Department of Mathematics
Katholieke Universiteit Leuven
Celestijnenlaan 200B
3001 Leuven, BELGIUM
\medskip
A. Mart\'{\i}nez-Finkelshtein (andrei@ual.es)
Department of Statistics and Applied Mathematics
University of Almer\'{\i}a, SPAIN, and
Instituto Carlos I de F\'{\i}sica Te\'{o}rica y Computacional
Granada University, SPAIN
\medskip
F. Wielonsky (Franck.Wielonsky@cmi.univ-mrs.fr)
Laboratoire d'Analyse, Topologie et Probabilit\'es
Universit\'e de Provence
39 Rue Joliot Curie
F-13453 Marseille Cedex 20, FRANCE
}


\begin{thebibliography}{99}
\bibitem{Bert}
    M. Bertola,
    Biorthogonal polynomials for two-matrix models with semiclassical potentials,
    J. Approx. Theory 144 (2007), 162--212.
\bibitem{BK4}
    P.M. Bleher and A.B.J. Kuijlaars,
    Large $n$ limit of Gaussian random matrices with external source, part III, double scaling limit, 
    Comm. Math. Phys. 270 (2007), 481--517.
\bibitem{Bo}
    A. Borodin,
    Biorthogonal ensembles,
    Nucl. Phys. B 536 (1998), 704--732.
\bibitem{BFPSW}
	  A. Borodin, P.L. Ferrari, M. Pr\"ahofer, T. Sasamoto, and J. Warren, 
	  Maximum of Dyson Brownian motion and non-colliding systems with a boundary, 
	  Electron. Commun. Probab. 14 (2009), 486--494.
\bibitem{BFS}
	  A. Borodin, P.L. Ferrari and T. Sasamoto,
	  Two speed TASEP,
	  J. Stat. Phys. 137 (2009), 936--977. 
\bibitem{BF}
    A. Borodin and P. Forrester,
    Increasing subsequences and the hard-to-soft edge transition in matrix  ensembles, 
    J. Phys. A 36 (2003), 2963--2981.
\bibitem{BoKu}
    A. Borodin and J. Kuan,
    Random surface growth with a wall and Plancherel measures for $O(\infty)$,
    Comm. Pure Appl. Math. 63  (2010),  831--894.
\bibitem{BS}
    A. Borodin and P. Salminen,
    \emph{Handbook of Brownian motion: facts and formulae}, second edition.
    Birkh\"{a}user, Basel, 1996.
\bibitem{CK2}
    T. Claeys and A.B.J. Kuijlaars,
    Universality in unitary random matrix ensembles when the soft edge meets the hard edge, in
    \emph{Integrable systems and random matrices},  265--279, 
		Contemp.\ Math., 458, Amer.\ Math.\ Soc., Providence, RI, 2008.
\bibitem{CV1}
    E. Coussement and W. Van Assche,
    Asymptotics of multiple orthogonal polynomials associated with the modified Bessel functions of the first kind,
    J. Comput. Appl. Math. 153 (2003), 141--149.
\bibitem{CV2}
    E. Coussement and W. Van Assche,
    Multiple orthogonal polynomials associated with the modified Bessel functions of the first kind,
    Constr. Approx. 19 (2003), 237--263.
\bibitem{DK1}
    E. Daems and A.B.J. Kuijlaars,
    A Christoffel-Darboux formula for multiple orthogonal polynomials,
    J. Approx. Theory 130 (2004), 188--200.
\bibitem{De}
    P. Deift,
    \emph{Orthogonal Polynomials and Random Matrices: a Riemann-Hilbert approach.}
    Courant Lecture Notes in Mathematics Vol. 3, Amer. Math. Soc.,
    Providence R.I. 1999.
\bibitem{DKMVZ1}
    P. Deift, T. Kriecherbauer, K.T.-R. McLaughlin, S. Venakides, and X. Zhou,
    Uniform asymptotics for polynomials orthogonal with respect to varying
    exponential weights and applications to universality questions in random matrix theory,
    Comm. Pure Appl. Math. 52 (1999), 1335--1425.
\bibitem{DKMVZ2}
    P. Deift, T. Kriecherbauer, K.T.-R. McLaughlin, S. Venakides, and X. Zhou,
    Strong asymptotics of orthogonal polynomials with respect to exponential weights,
    Comm. Pure Appl. Math. 52 (1999), 1491--1552.
\bibitem{DKZ}
	  S. Delvaux, A.B.J. Kuijlaars and L. Zhang,
    Critical behavior of non-intersecting Brownian motions at a tacnode,
    preprint arXiv:1009.2457.
\bibitem{DesFor}	
		P. Desrosiers and P. Forrester,
		A note on biorthogonal ensembles,
		J. Approx. Theory 152 (2008), 167--187.
\bibitem{For}	
		P. Forrester,
		\emph{Log-gases and Random Matrices.}
		London Mathematical Society Monographs Series, 34. 
		Princeton University Press, Princeton, NJ, 2010. 
\bibitem{Ince}
 	  E.L. Ince, 
 	  \emph{Ordinary Differential Equations.} 
 	  Dover Publications, New York, 1944. 
\bibitem{Jo2}
    K. Johansson,
    Random matrices and determinantal processes,
    in: \emph{Mathematical Statistical Physics} (A.Bovier et al., eds.), 
    Elsevier, Amsterdam, 2006, pp. 1--55.
\bibitem{KM}
    S. Karlin and J. McGregor,
    Coincidence probabilities, 
    Pacific J. Math. 9 (1959), 1141--1164.
\bibitem{KIK}
	  M. Katori, M. Izumi, and N. Kobayashi,  
	  Two Bessel bridges conditioned never to collide, double Dirichlet series, and Jacobi theta function, 
	  J. Stat. Phys. 131 (2008), 1067--1083.
\bibitem{KatTan1}
	  M. Katori and H. Tanemura,
	  Noncolliding processes, matrix-valued processes and determinantal processes,
	  Sugaku Expositions 61 (2009), 225--247; preprint arXiv:1005.0533.
\bibitem{KatTan2}
	  M. Katori and H. Tanemura,
	  Noncolliding squared Bessel processes and Weierstrass canonical products for entire functions,
	  preprint arxiv:1008.0144.
\bibitem{KMW} 
		A.B.J. Kuijlaars, A. Mart\'{\i}nez-Finkelshtein, and F. Wielonsky, 
		Non-intersecting squared Bessel paths and multiple orthogonal polynomials for modified Bessel weights, 
		Comm. Math. Phys. 286 (2009), 217--275.
\bibitem{KO}
    W. K\"{o}nig and N. O'Connell,
    Eigenvalues of the Laguerre process as non-colliding squared Bessel processes,
    Elect. Comm. Probab. 6 (2001), 107--114.
\bibitem{So}
    A. Soshnikov,
    Determinantal random point fields,
    Russian Mathematical Surveys, 55 (2000), 923--975.
\bibitem{TraWid}
	  C.A. Tracy and H. Widom,
	  Nonintersecting Brownian excursions,
	  Ann. Appl. Prob. 17 (2007), 953--979.
\bibitem{WarWin}
		J. Warren and P. Windridge,  
		Some examples of dynamics for Gelfand Tsetlin patterns,
		Electron. J. Probab. 14 (2008), 1745--1769.
\end{thebibliography}
\end{document}